\def\jmpu{{\lbrack\!\lbrack \widetilde{\gamma}> \\
  <g,\widetilde{\Balpha}> \\
<g,\widetilde{\Bbeta}> \end{bmatrix}$, whereu\rbrack\!\rbrack}}
\def\sC{{ C}}
\def\sS{{\mathcal S}}
\def\sG{{\mathcal G}}
\def\sJ{{\mathcal J}}
\def\sS{{\mathcal S}}
\def\sD{{\mathcal D}}
\def\sG{{\mathcal G}}
\def\Beta{\mbox{\boldmath$\eta$}}
\def\Balpha{\mbox{\boldmath$\alpha$}}
\def\Bbeta{\mbox{\boldmath$\beta$}}
\def\Beta{\mbox{\boldmath$\eta$}}
\def\Blambda{\mbox{\boldmath$\lambda$}}
\def\bd{\mathbf{d}}
\def\bg{\mathbf{g}}
\def\bu{\mathbf{u}}
\def\bv{\mathbf{v}}
\newcommand{\half}{\frac{1}{2}}
\newcommand{\LRp}[1]{\left( #1 \right)}
\newcommand{\LRs}[1]{\left[ #1 \right]}
\newcommand{\GM}[2]{\mc{N}\left( #1, #2 \right)}
\newcommand{\db}{\mathbf{d}}
\newcommand{\ub}{\mathbf{u}}
\newcommand{\umap}{\ub^{MAP}}
\newcommand{\nor}[1]{\left\| #1 \right\|}
\newcommand{\norinf}[1]{\left\| #1 \right\|_\infty}
\newcommand{\snor}[1]{\left| #1 \right|}
\newcommand{\norm}[1]{\left \Vert #1 \right \Vert}
\newcommand{\grad}{\nabla}
\newcommand{\mc}[1]{\mathcal{#1}}
\newcommand{\R}{\mathbb{R}}
\renewcommand{\L}{{{\Sigma}}}
\newcommand{\A}{A}
\newcommand{\G}{\mc{G}}
\newcommand{\N}{N}
\newcommand{\M}{M}
\newcommand{\n}{n}
\newcommand{\J}{\mc{J}}
\newcommand{\D}{\mc{D}}
\newcommand{\La}{\mc{L}}
\newcommand{\vb}{\boldsymbol{v}}
\newcommand{\sigb}{{\boldsymbol{\sigma}}}
\newcommand{\Dstoch}{\tilde\D}
\newcommand{\Dexpect}{\mathscr{D}}
\newcommand{\DexpectSA}{\Dexpect_{\N}}
\newcommand{\DexpectSi}{\Dexpect_{i}}
\def\rand{\mbox{\boldmath$\xi$}}
\newcommand{\randP}{\mathcal{\pi}}
\newcommand{\del}{{\boldsymbol{\delta}}}
\newcommand{\taub}{{\boldsymbol{\tau}}}
\newcommand{\delP}{\randP_{\del}}
\newcommand{\sigP}{\randP_{\sigb}}
\newcommand{\subjectTo}{{subject \ \  to: }}
\newcommand{\eb}{\boldsymbol{e}}
\renewcommand{\Pr}[1]{\mathbb{P}\LRs{#1}}
\newcommand{\B}{{B}}
\renewcommand{\b}{\boldsymbol{b}}
\newcommand{\E}{\mc{E}}
\newcommand{\Ex}{{\mathbb{E}}}
\newcommand{\expect}{\Ex}
\algnewcommand\algorithmicparfor{\textbf{For all}}
\algnewcommand\algorithmicpardo{\textbf{do in parallel}}
\algnewcommand\algorithmicendparfor{\textbf{end\ For all}}
\newenvironment{proof}{\paragraph{\textit{Proof:}}}{}
\newtheorem{proposition}{Proposition}
\newtheorem{remark}{Remark}
\newtheorem{theorem}{Theorem}
\newtheorem{corollary}{Corollary}
\begin{document}

\articletype{Paper} 

\title{A New Look at the Ensemble Kalman Filter for Inverse Problems: Duality, Non-Asymptotic Analysis and Convergence Acceleration}

\author{C G Krishnanunni$^1$, Jonathan Wittmer$^*$,  Tan Bui-Thanh$^{1,*}$, Quoc P. Nguyen$^2$}

\affil{$^1$Department of Aerospace Engineering \& Engineering Mechanics, UT Austin}

\affil{$^2$Hildebrand Department of Petroleum and Geosystems Engineering}

\affil{$^*$Oden Institute for Computational Engineering and Sciences,  UT Austin: now at Facebook research.}

\email{krishnanunni@utexas.edu}

\keywords{Randomization, Bayesian Inversion, Ensemble Kalman Filter, Ensemble Kalman Inversion, Lagrangian Duality.}

\begin{abstract}
This work presents new results and understanding of the Ensemble Kalman filter (EnKF) for inverse problems.   In particular, using a Lagrangian dual perspective we show that EnKF can be derived from the sample average approximation (SAA) of the  Lagrangian dual function.
The beauty of this new duality perspective is that it facilitates us to prove and numerically verify a novel non-asymptotic convergence result for the EnKF. Motivated by the new perspective, we also present a new convergence improvement strategy for the Ensemble Kalman Inversion Algorithm (EnKI), which is an iterative version of the EnKF for inverse problems.
In particular, we propose an adaptive multiplicative correction to the sample covariance matrix at each iteration and we call this new algorithm as EnKI-MC (I). Based on the new duality perspective, we derive an expression for the optimal correction factor at each iteration of the EnKI algorithm to accelerate the convergence. In addition, we also consider an ensemble specific multiplicative covariance correction strategy (EnKI-MC (II)) where a different  correction is employed for each ensemble.
By viewing EnKI through the lens of fixed-point iteration, we also provide theoretical results that guarantees the convergence of EnKI-MC (I) algorithm. 
 Numerical investigations for the deconvolution problem, initial condition inversion in advection-convection problem, initial condition inversion in a Lorenz 96 model, and inverse problem constrained by elliptic partial differential equation are conducted to verify the non-asymptotic results for EnKF and to assess the performance of convergence improvement strategies for EnKI. The numerical results suggest that the proposed strategies for EnKI not only led to faster convergence in comparison to the currently employed techniques but also  better quality solutions at termination of the algorithm.
\end{abstract}

\section{Introduction}

The field of data assimilation seeks to find the best estimate for the unknown states or parameters in a dynamical system by combining appropriate mathematical models with observations and balancing the uncertainties \cite{kalnay2003atmospheric, oliver2008inverse}.  The  Kalman filter, with extensions and generalizations such as the Extended Kalman filter,  Unscented Kalman filter and Ensemble Kalman filter, has emerged as one of the most popular data assimilation tool over the past few decades \cite{kalman1960new, julier2004unscented, evensen2003ensemble}. In particular, the Ensemble Kalman filter (EnKF) has been developed as a recursive filter suitable for state-parameter estimation problems  involving high dimensional PDEs \cite{evensen2003ensemble}.

The EnKF addresses the issue of expensive covariance matrix computation by replacing it with the sample covariance matrix of an ensemble. The EnKF may also be viewed as a derivative-free optimization technique, with the ensemble used as a proxy for derivative information.  The EnKF propagates $N$ ensembles through the dynamics, and subsequently update
this prior forecast ensemble into an updated analysis ensemble that assimilates the new observation. Multiple variants of EnKF for state-parameter estimation problems in dynamical systems are available in the literature \cite{evensen2009data, kalnay2003atmospheric}. Among them covariance inflation and localization are two techniques that attempts to avoid filter divergence due to sampling errors associated with the use of a small ensemble size \cite{anderson1999monte, hamill2005accounting}. Albers et al. \cite{albers2019ensemble} have developed an Ensemble Kalman filter capable of imposing equality or inequality constraints on ensemble based parameter and state estimation problems respectively. The popularity of EnKF as a data assimilation tool  may be attributed to its effectiveness even when used with small ensemble sizes \cite{bergemann2010mollified}. However, much of the efforts in the research community has been devoted to proving the asymptotic convergence in the large ensemble limit \cite{li2008numerical, le2009large, kwiatkowski2015convergence}.  Several
works  \cite{bergemann2010localization,kelly2014well,majda2018performance} have noted that large ensemble limit analysis fail to explain empirical
results that report good performance with a moderately sized ensemble even in high-dimensional problems.  This highlights the need for a non-asymptotic convergence analysis of the EnKF that establishes how the ensemble size influences the error in approximating the solution (i.e the posterior mean and covariance). 

The possibility of using the Ensemble Kalman method as an inverse problem solver has been explored recently by various researchers  where it is commonly referred to as the Ensemble Kalman Inversion (EnKI) \cite{schillings2017analysis, iglesias2013ensemble}. Let $\G:\ \R^n\mapsto \R^m$ be a continuous mapping.  The inverse problem aims to find the unknown parameters $\ub \in \R^\n$ given measurements $\db$ of the form:
\begin{equation}
\db=\G(\ub)+\Beta,
\label{forward}
\end{equation}
where  $\db \in \R^m$, $\Beta$ is  the observational noise vector. In the context of the Ensemble Kalman Inversion (EnKI) algorithm, one considers a set of initial guess estimates for the solution, referred to as ensembles, denoted by $\{\ub^{(i)}\}_{i=1}^N$, where $N$ is the number of ensemble members. These ensembles are typically generated based on prior knowledge, often modeled by a prior distribution. Given a forward mapping $\G$ and an observation vector $\db$, the EnKI algorithm transforms each ensemble member $\ub^{(i)}$ to produce an  estimate of the inverse solution, denoted by $\LRp{\ub^{(i)}}^*$.
Schillings et al. \cite{schillings2017analysis} has analyzed the method with fixed ensemble size based on deriving a continuous time limit of the iterative scheme. The EnKI has an interesting property that each ensemble estimate (inverse solution $\LRp{\ub^{(i)}}^*$) it produces lies within the linear span of the initial ensembles $\{\ub^{(i)}\}_{i=1}^N$. This property is commonly referred to as the “subspace property” of EnKI \cite{schillings2017analysis}. Chada et al. \cite{chada2018analysis} have attempted to break this subspace property by incorporating various hierarchical approaches which allows to learn  information from
the data. Iglesias et al. \cite{iglesias2016regularizing} have merged ideas from iterative regularization with ensemble Kalman inversion
to develop a derivative-free stable method. Blömker et al. \cite{blomker2019well} used a continuous time limit perspective of the EnKI to derive a system of stochastic differential equations that allows one to understand the convergence properties of the method. More recently, Chada et al. \cite{chada2020tikhonov} has demonstrated how further regularization in the form of Tikhonov-like Sobolev penalties can be imposed in the Ensemble Kalman inversion framework.  An adaptive regularization strategy for the EnKI has been  proposed  by Iglesias et al. \cite{iglesias2021adaptive} which is based on the interpretation of EnKI as a Gaussian approximation in the Bayesian tempering setting for inverse problems. Solving inverse problems often requires imposition of additional constraints on the parameters. In order to deal with this issue,  Chada et al. \cite{chada2019incorporation} proposed an EnKI motivated by the theory of projected preconditioned gradient flows. EnKI algorithm finds extensive application in scientific computing, where it has been successfully deployed in a variety of inverse problems and data assimilation tasks \cite{kovachki2019ensemble, schneider2020learning, blomker2018strongly}.
More detailed literature review on the theoretical aspects of EnKI is presented in section \ref{from_enkf_enki}. One of the main practical challenges in deploying the EnKI iterative scheme is the computational cost arising from the need to perform $N$
expensive evaluations of the forward map $\sG$ at each iteration. This makes it essential to develop convergence-acceleration techniques for the EnKI algorithm that can reduce the total number of forward evaluations and thereby improve overall computational efficiency.


In this work, we present new results on the understanding of the Ensemble
Kalman filter (EnKF) for inverse problems. To that end, let us constrain the discussion to the case where the map $\G$ in (\ref{forward}) is linear and we denote it by $\sG=\A$.
It is interesting to note that a conventional derivation of the Kalman filter for inverse problem requires an application of Sherman-Morrison-Woodbury formula \cite{deng2011generalization} to rewrite the least-squares solution as follows \cite{hager1989updating}: 
\begin{align}
   \umap =& \arg\min_\ub \half\norm{\db -
    \A\ub}^2_{\L^{-1}} + \half \norm{\ub - \ub_0}^2_{\sC^{-1}}\label{unco}\\
    =&\LRp{\A^T\L^{-1}\A+\sC^{-1}}^{-1}\LRp{\A^T\L^{-1}\db+\sC^{-1}\ub_0}
    \label{unco_us}
\end{align}    
\[ \quad \quad \quad \quad \quad \quad \quad \quad\Updownarrow \mathrm{(Sherman-Morrison-Woodbury \ formula)}\]
\begin{equation}
  \hspace{-0.5 cm}   \umap = \ub_0 +\sC\A^T\LRp{\L + \A\sC\A^T}^{-1}\LRp{\db -\A\ub_0}
\label{kal_der}
\end{equation}
where $\sC \in \R^{\n\times \n}$ is the prior covariance matrix, $\ub_0\in \R^n$ is the prior mean and $\L \in\R^{m\times m}$ is the data covariance matrix. Note that \eqref{kal_der} is nothing but the state update equation in the context of Kalman filters (single assimilation step) and the quantity $K=\sC\A^T\LRp{\L + \A\sC\A^T}^{-1}$ is commonly known as the Kalman gain. Instead of following this conventional derivation, in this work we derive the Kalman filter equation \eqref{kal_der} as the solution to a Lagrangian dual problem. 
Via this viewpoint and by dealing with a randomized Lagrangian dual function, we show that a novel non-asymptotic convergence result can be derived for the ensemble Kalman filter. Section \ref{dual_1} and \ref{randomized_sec} deals with the derivation of the Ensemble Kalman Filter as a Randomization of the Lagrangian Dual Problem, and Theorem \ref{non_asymp} provides the novel non-asymptotic error estimator for EnKF. Further, in section \ref{conv_imp_1}, \ref{conv_imp_2} we show how the new duality perspective may be used to develop a convergence improvement strategy for the Ensemble Kalman Inversion Algorithm (EnKI), which is an iterative version of the EnKF for inverse problems. In particular, we propose an adaptive multiplicative correction to the sample covariance matrix at each iteration of EnKI, and we call this new algorithm as EnKI-MC (I). Such a strategy for convergence acceleration of the iterative scheme has been previously employed by Chada et al. \cite{chada2019convergence} where a heuristic multiplicative correction factor of the form $\alpha(k)=h_0k^{\beta}$ ($0\leq \beta\leq 0.8$) has been considered to accelerate the convergence, where $k$ is the iteration number of EnKI. Our work is a significant departure from Chada et al. \cite{chada2019convergence}  by using the duality perspective to derive an expression for the optimal correction factor $\alpha(\ub_n,\ n)$, where $\ub_k=[\ub_k^{(1)},\dots \ub_k^{(N)}]$ denotes the solution  at the $k^{th}$ iteration, and $\ub_k^{(i)}$ denote the $i^{th}$ ensemble member. Using the duality perspective, an objective function is constructed  to derive the optimal $\alpha(\ub_k,\ k)$ to accelerate the convergence of EnKI while also promoting convergence to the true solution provided that the true solution lies in the linear span of initial ensembles (see section \ref{design}). Note that the correction factor $\alpha(\ub_k,\ k)$ in our case not only depends on $k$ but also on the  solution $\ub_k$. 
Further in section \ref{conv_imp_2}, we also consider an ensemble-specific multiplicative covariance correction strategy (EnKI-MC (II)) where a different correction factor $\alpha^{(i)}(\ub_k,\ k)$ is employed for the $i^{th}$ member of the ensemble.
By viewing EnKI through the lens of fixed-point iteration, Theorem \ref{mult_conv} discusses the convergence behaviour of EnKI-MC (I) algorithm. In section \ref{numeric_section} we provide numerical experiments for prototype linear and non-linear inverse problems  to verify the non-asymptotic results for EnKF and to assess the performance of convergence improvement strategies for EnKI. The numerical results suggest that the proposed strategies for EnKI not only led to faster convergence in comparison to the currently employed techniques but also led to better quality solutions.

\section{Derivation of the Kalman Filter state update equation from Duality}
\label{dual_1}
In this work, all vectors are denoted in boldface and matrices are represented in 
capital letters. We also use the notation $\ub_k^{(i)}$ to denote the solution for the $i^{th}$ ensemble member at the $k^{th}$ iteration of the EnKI algorithm, $\ub_k=[\ub_k^{(1)},\dots \ub_k^{(N)}]$ denotes the collection of all ensembles at the $k^{th}$ iteration, $\sC_k:= \sC(\ub_k)$ denotes the sample covariance matrix formed using the $N$ ensembles.

We begin by looking at the state update equation \eqref{kal_der} which serves as the backbone of the Kalman filter. In this section, we will derive  \eqref{kal_der} as the solution to a Lagrangian dual problem. The duality view not only provides new insight into the Kalman filter but also (as we shall show) leads to a new derivation of the ensemble Kalman filter as a randomization of the dual problem.

{\em Our departure from the mainstream of Kalman filter and ensemble Kalman filter literatures is to cast the unconstrained optimization problem \eqref{unco} into a constrained optimization problem} by introducing an artificial constraint as follows:
\begin{equation}
\begin{aligned}
     \label{constrainedOpt}
            &\underset{\ub,\bv}{\min}\ \J(\ub;\bv;\ub_0, \db):=\half\norm{\db -
    \bv}^2_{\L^{-1}} + \half \norm{\ub - \ub_0}^2_{\sC^{-1}}, \\
           &  \subjectTo  \quad \quad \ \A\ub=\bv,
\end{aligned}    
\end{equation}
where $\bv\in \R^m$. The Lagrangian function $\La: \R^n\times \R^m\times  \R^m\rightarrow \R$ for the constrained problem  (\ref{constrainedOpt}) reads:
      \begin{equation}
        \La (\ub,\bv,\Blambda)=\half\norm{\db -
    \bv}^2_{\L^{-1}} + \half \norm{\ub - \ub_0}^2_{\sC^{-1}}+\Blambda^T \LRp{\A\ub-\bv},
    \label{Lag_function}
    \end{equation}
where $\Blambda \in \R^m$ is the Lagrangian multiplier. The Lagrange dual function $\D:  \R^m\rightarrow \R $ for (\ref{constrainedOpt}) is defined as:
     
     \begin{equation}
         \D(\Blambda) := \inf_{\ub,\bv} \La (\ub,\bv,\Blambda).
         \label{dual_function}
     \end{equation}
    By setting the derivatives of $\La$ with respect to $\ub$ and $\bv$ to zeros, and then solving for $\ub,\bv$ as a function of $\Blambda$ we obtain the following dual problem:

      \begin{eqnarray}
      \max_{\Blambda}\D(\Blambda)
       := -\half \Blambda^T \LRp{\L+\A\sC\A^T}\Blambda+\Blambda^T\LRp{\A\ub_0-\bd}.
    \label{dual_function_new}
     \end{eqnarray}
The first order optimality condition for (\ref{dual_function_new}) reads: 
 \begin{equation}
     \Blambda^*=\LRp{\L+\A\sC\A^T}^{-1}\LRp{\A\ub_0-\bd}  
     \label{lambda_optimal}
  \end{equation}
On the other hand, the KKT conditions for (\ref{constrainedOpt}) reads:
\begin{equation}
     \begin{aligned}
      \label{optimal_one}
 & \frac{\partial \La}{\partial \bv}=\L^{-1}\LRp{\bv-\bd}-\Blambda=0 && \Rightarrow \bv=\bd+\L\Blambda,\\
  & \frac{\partial \La}{\partial \ub}=\sC^{-1}\LRp{\ub-\ub_0}+\A^T\Blambda=0 && \Rightarrow \ub=\ub_0-\sC\A^T\Blambda,\\
 &  \frac{\partial \La}{\partial \Blambda} = \A\ub-\bv = 0  && \Rightarrow \A\ub=\bv.    
 \end{aligned}
\end{equation}
 We conclude that the optimal solution for (\ref{constrainedOpt})\textemdash the MAP point\textemdash is given by:  
\begin{equation}
      \umap=\ub^*=\ub_0-\sC\A^T\Blambda^* = \ub_0+\sC\A^T\LRp{\L+\A\sC\A^T}^{-1}\LRp{\bd-\A\ub_0}.
      \label{optimal_MAP}
  \end{equation}
\begin{remark}
Note that the dual optimality condition (\ref{lambda_optimal}) is part of the KKT conditions \eqref{optimal_one}. Furthermore, the invertibility of $\sC^{-1}$ and $\L^{-1}$ is necessary for validity of solution (\ref{optimal_MAP}) and is consistent with the conditions imposed by Sherman-Morrison-Woodbury formula used in the conventional derivation of (\ref{optimal_MAP}). In our derivation, the conditions arise naturally via the KKT conditions 
\eqref{optimal_one} for \eqref{constrainedOpt}.
\end{remark}
\section{Derivation of the Ensemble Kalman Filter as a Randomization of the Dual Problem}
\label{randomized_sec}

  Let us define:
  
    \begin{equation}
    \label{rand_variables}
    \sigb \sim \sigP := \GM{0}{\L}, \quad
\del \sim \delP := \GM{0}{\sC}.
    \end{equation}
Thus, $\randP\LRp{\sigb,\del} := \sigP \times  \delP $ is the probability density of:
    \begin{equation*}
        \rand := \LRs{\sigb,  \del}. 
    \end{equation*}
Consider the following stochastic Lagrangian dual function as:
        \begin{equation}
        \Dstoch\LRp{\Blambda; \ub_0, \db, \rand}:= 
        -\half \Blambda^T \LRp{\L+\A \del \del^T\A^T}\Blambda+\Blambda^T\LRp{\A\LRp{\ub_0+\del}-\bd-\sigb}.
         \label{stochastic_cost_D}
        \end{equation}   
Clearly,
    \begin{equation}
      \label{expect_cost}
                \Dexpect \LRp{\Blambda} = 
        \expect_\randP \LRs{\Dstoch \LRp{\Blambda;\ub_0, \db, \rand}},
    \end{equation}
that is, (\ref{stochastic_cost_D}) is an unbiased estimator of the dual function (\ref{dual_function_new}).
Let $\LRs{\del^{(1)},\del^{(2)}...\del^{(N)}}$ and $\LRs{\sigb^{(1)},\sigb^{(2)}...\sigb^{(N)}}$ be ensemble pairs from $\randP\LRp{\sigb,\del}$. We can approximate the expectation on the right hand side of (\ref{expect_cost}) via the sample average approximation (SAA), i.e.,
\begin{equation}
  \DexpectSA := -\frac{1}{2N}\sum_{i=1}^N \Blambda^T \LRp{\L+\A \del^{(i)}{\del^{(i)}}^T\A^T}\Blambda+\frac{\Blambda^T}{N}\LRp{\A\LRp{\ub_0+\del^{(i)}}-\bd-\sigb^{(i)}}.  
  \label{sample_average_dual}
\end{equation}
If we define 
    \[\Omega=\frac{1}{\sqrt{N}}\LRs{\del^{(1)},\del^{(2)}...\del^{(N)}} ,\quad \overline{\del}=\frac{1}{N}\sum_{i=1}^N \del^{(i)}, \quad  \overline{\sigb}=\frac{1}{N}\sum_{i=1}^N \sigb^{(i)},
    \]
    then the SAA optimal solution for the dual problem reads:
 \begin{equation}
        \label{optimal_1}
            \overline{\Blambda}^*:= \arg\max_{\Blambda} \DexpectSA =\LRp{\L+\A\Omega\Omega^T\A^T}^{-1}\LRp{\A\LRp{\ub_0+\overline{\del}}-\LRp{\bd+\overline{\sigb}}},
    \end{equation}
    and the optimal dual function $ \DexpectSA^*$ is given by:
    \begin{equation}
        \DexpectSA^* := -\frac{1}{2}(\overline{\Blambda}^*)^T \LRp{\L+\A \Omega\Omega^T\A^T}\overline{\Blambda}^*+(\overline{\Blambda}^*)^T\LRp{\A\LRp{\ub_0+\overline{\del}}-\LRp{\bd+\overline{\sigb}}}. 
        \label{optimal_dual}
    \end{equation}
Consequently the induced SAA optimal solution for the primal problem (\ref{constrainedOpt}) is given as:

 \begin{equation}
    \hat{\ub}^* := \ub_0-\sC\A^T \overline{\Blambda}^* = \ub_0+\sC\A^T \LRp{\L+\A\Omega\Omega^T\A^T}^{-1}\LRp{\bd+\overline{\sigb} - \A\LRp{\ub_0+\overline{\del}}}.
     \label{optimal_solution_random_kalman}
  \end{equation}
 If we further randomize $\ub_0$ and $\sC$ with $\ub_0 + \overline{\del}$ and $\Omega\Omega^T$ in (\ref{optimal_solution_random_kalman}) we obtain
    \begin{equation}
    \overline{\ub}^* :=  \ub_0 + \overline{\del}+\Omega\Omega^T\A^T \LRp{\L+\A\Omega\Omega^T\A^T}^{-1}\LRp{\bd+\overline{\sigb} - \A\LRp{\ub_0+\overline{\del}}},
    \label{optimal_solution_random}
  \end{equation}
    which is exactly the well-known ensemble Kalman filter (EnKF).
Now unrolling the sums and defining
   \begin{equation}
       \ub^{{EnKF}}_{\N} = \frac{1}{N}\sum_{i=1}^N{{\ub}^{(i)}}^*,
       \label{ensemble_kalman_inversion}
   \end{equation}
we have:
     \begin{equation}
    {{\ub}^{(i)}}^* :=  \ub_0 + \del^{(i)}+\Omega\LRp{\A \Omega}^T \LRp{\L+\A\Omega\LRp{\A\Omega}^T}^{-1}\LRp{\bd+\sigb^{(i)} - \A\LRp{\ub_0+\del^{(i)}}}
     \label{each_ensemble_kalman_inversion},
  \end{equation}
  which is nothing more but the ensemble update equation in Ensemble Kalman Filter (EnKF) framework. That is, we have rediscovered the EnKF from randomizing the dual problem and the relationship between the primal and dual optimal solutions. 
 \begin{remark}
     The importance of \eqref{ensemble_kalman_inversion} is that it allows one to compute an estimate of the inverse solution $\umap$ at a much cheaper cost than if one were to use the actual prior covariance matrix $C$. To understand this, let us consider the main computational cost associated with computing $\sC\A^T\LRp{\L+\A\sC\A^T}^{-1}$ in \eqref{optimal_MAP} vs. the cost associated with $\Omega\LRp{\A \Omega}^T \LRp{\L+\A\Omega\LRp{\A\Omega}^T}^{-1}$ in \eqref{each_ensemble_kalman_inversion}.
     \begin{enumerate}
         \item With $\sC=LL^T$ (Cholesky decomposition), let's look at cost  for  forming $L\LRp{\A L}^T \LRp{\L+\A L\LRp{\A L}^T}^{-1}$: The main cost involves computing the multiplications $AL$, $L(AL)^T$, $(AL)(AL)^T$ and the cost associated with inversion $\LRp{\L+\A L\LRp{\A L}^T}^{-1}$. The total cost is  $\mathcal{O}\LRp{mn^2+m^2n+m^3}$\footnote{Here $\mathcal{O}$ denotes the order of computation. In the estimate we considered the standard cost of inversion rather than a reduced cost if one uses the Woodbury identity.}.
        \item For  forming $\Omega\LRp{\A \Omega}^T \LRp{\L+\A\Omega\LRp{\A\Omega}^T}^{-1}$: The main cost involves computing the multiplications $A\Omega$, $\Omega(A\Omega)^T$, $(A\Omega)(A\Omega)^T$ and the cost associated with inversion $\LRp{\L+\A \Omega\LRp{\A \Omega}^T}^{-1}$. The total cost is $\mathcal{O}\LRp{mnN+m^2N+m^3}$.
     \end{enumerate}
 \end{remark}
Therefore, for high dimensional problems it is clear that when $N<<n$, the cost associated with computing \eqref{ensemble_kalman_inversion} is much lower than \eqref{optimal_MAP}. Further, one only needs to store $n\times N$ elements of $\Omega$ instead of storing  $\frac{n(n+1)}{2}$ elements for $L$. Now the main question that remains is to characterize the error $\nor{\umap - \ub^{{EnKF}}_{\N}}_2 $ when one uses $N$ ensembles, and how the error decreases as $N$ increases. Using the new duality perspective, in section \ref{non_asym_en}, we provide answer to this question via 
non-asymptotic convergence analysis\footnote{Our focus is on analyzing a single ensemble
Kalman update as given in \eqref{optimal_solution_random} rather than on investigating the propagation of error across multiple updates in the context of a dynamical system.}. 
  
\section{Non-Asymptotic Convergence results for linear inverse problem}
\label{non_asym_en}
Based on the duality viewpoint presented in section \ref{randomized_sec}, we will derive a novel non-asymptotic convergence result for the EnKF. Note that the asymptotic convergence of  EnKF, i.e. 
\[\ub^{{EnKF}}_{\N} \xrightarrow[]{a.s} \umap,\quad \mathrm{as}\ N\rightarrow \infty,\]
is a well-known result in literature \cite{wittmer2023unifying}. Wittmer et al. \cite{wittmer2023unifying}  provided a unified framework for non-asymptotic convergence analysis of several randomized methods obtained by randomizing the inverse solution \eqref{unco_us}. However, their framework does not address the non-asymptotic convergence of the Ensemble Kalman filter \eqref{ensemble_kalman_inversion}, which is a randomized counterpart of \eqref{kal_der} and requires a fundamentally different analytical treatment.
In this section we derive a novel non-asymptotic convergence result for EnKF. 
 In particular, we wish to estimate the error between the actual MAP point $\umap$ in (\ref{optimal_MAP}) and the EnKF solution $\ub^{{EnKF}}_{\N}$ in (\ref{ensemble_kalman_inversion}). We begin by decomposing the error as:
\begin{equation}
    \eb_{\ub} := \norinf{\umap - \ub^{{EnKF}}_{\N}} \le \underbrace{\norinf{\overline{\del}}}_{=:\eb_\del} + \norinf {\A^T} \underbrace{\norinf{\Omega\Omega^T}\norinf{\overline{\Blambda}^* - \Blambda^*}}_{=:\eb_{\Blambda}} + \norinf {\A^T} \underbrace{\norinf{\Omega\Omega^T - \sC}}_{=:\eb_{\Omega}} \norinf{\Blambda^*}
    \label{decomp}
\end{equation}
Proposition \ref{eDelta}, Proposition \ref{eOmega}, and Proposition \ref{elam} provides non-asymptotic error bounds for  $\eb_\del$, $\eb_{\Omega}$, and $\eb_{\Blambda}$ respectively. The results in Proposition \ref{eDelta}, \ref{eOmega}, and \ref{elam} will be used in Theorem \ref{non_asymp} to finally derive the desired error bounds for $ \eb_{\ub}$.

\begin{proposition}[Estimation for $\eb_\del$]
\label{eDelta}

Let $\del^{(i)} \sim \GM{0}{\sC}$, $i = 1,\dots,N$. For any $\varepsilon_1 > 0$, there holds:
\[
\norinf{\overline{\del}} \le \varepsilon_1
\]
with probability at least $1 - \exp\LRp{-\frac{1}{4c_1\ \mathrm{dim}(C)}\LRp{\frac{\varepsilon_1}{\norinf{\sC^\half}}\sqrt{N}}^2} =: 1 - \E_1^{c_1}\LRp{\varepsilon_1,\sC}$,  where $c_1$ is an absolute positive constant, and $\mathrm{dim}(C)$ denotes the dimension of $C$.
\end{proposition}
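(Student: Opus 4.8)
The plan is to use that $\overline{\del}$, as the average of $N$ i.i.d.\ $\GM{0}{\sC}$ vectors, is itself Gaussian, and then to reduce the $\ell_\infty$ statement to a single tail bound for the Euclidean norm of a standard Gaussian vector. First I would note that $\overline{\del}=\frac{1}{N}\sum_{i=1}^N\del^{(i)}\sim\GM{0}{\sC/N}$, so it admits the representation $\overline{\del}=\frac{1}{\sqrt{N}}\sC^\half\bz$ with $\bz\sim\GM{0}{\bI}$ in $\R^{\mathrm{dim}(C)}$ (here I use that $\sC^\half$ is symmetric, so this vector indeed has covariance $\sC/N$).

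Next I would dominate the $\ell_\infty$ norm by a scalar multiple of $\nor{\bz}_2$. Reading $\norinf{\sC^\half}$ as the induced matrix $\infty$-norm and using $\norinf{\bz}\le\nor{\bz}_2$, we obtain
\[
\norinf{\overline{\del}}=\frac{1}{\sqrt{N}}\norinf{\sC^\half\bz}\le\frac{1}{\sqrt{N}}\norinf{\sC^\half}\,\nor{\bz}_2 .
\]
Hence the event $\LRc{\norinf{\overline{\del}}>\varepsilon_1}$ is contained in $\LRc{\nor{\bz}_2>t}$ with $t:=\varepsilon_1\sqrt{N}/\norinf{\sC^\half}$, and it suffices to control $\Pr{\nor{\bz}_2>t}$.

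The quantitative core is the tail bound for $\nor{\bz}_2$. Here I would invoke the standard fact that the Euclidean norm of a standard Gaussian vector in $\R^{\mathrm{dim}(C)}$ is sub-gaussian with sub-gaussian norm of order $\sqrt{\mathrm{dim}(C)}$ (equivalently, Gaussian Lipschitz concentration around $\Ex\nor{\bz}_2\le\sqrt{\mathrm{dim}(C)}$, since $\bz\mapsto\nor{\bz}_2$ is $1$-Lipschitz). This produces, for an absolute constant $c_1$ absorbing the universal constants in the sub-gaussian tail,
\[
\Pr{\nor{\bz}_2>t}\le\exp\LRp{-\frac{t^2}{4c_1\,\mathrm{dim}(C)}} .
\]
Substituting $t=\varepsilon_1\sqrt{N}/\norinf{\sC^\half}$ gives exactly $\E_1^{c_1}\LRp{\varepsilon_1,\sC}$, and taking complements yields the claimed bound $\Pr{\norinf{\overline{\del}}\le\varepsilon_1}\ge 1-\E_1^{c_1}\LRp{\varepsilon_1,\sC}$.

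The main obstacle, and the only step carrying real content, is the concentration of $\nor{\bz}_2$ in the precise form above: the dimension $\mathrm{dim}(C)$ must sit in the denominator of the exponent rather than appearing as a prefactor. This is exactly why the argument routes through $\nor{\bz}_2$ (whose square is $\chi^2_{\mathrm{dim}(C)}$, giving the $\mathrm{dim}(C)$ scaling) instead of a coordinatewise union bound over the $\mathrm{dim}(C)$ Gaussian entries of $\overline{\del}$, which would instead yield a factor $2\,\mathrm{dim}(C)$ out front and a different denominator. The remaining care is purely bookkeeping: fixing the interpretation of $\norinf{\sC^\half}$ as the induced $\infty$-norm so that $\norinf{\sC^\half\bz}\le\norinf{\sC^\half}\norinf{\bz}$ is valid, and confirming that the universal constant in the Gaussian-norm tail can be folded into $c_1$ uniformly in $\varepsilon_1$.
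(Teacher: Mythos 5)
Your proof is essentially the paper's: both whiten the average as $\overline{\del}=\frac{1}{\sqrt N}\sC^{1/2}\bz$ with $\bz\sim\GM{0}{\bI}$, dominate $\norinf{\overline{\del}}$ by $\norinf{\sC^{1/2}}$ times a full norm of the standard Gaussian vector, invoke a tail of the form $\exp\LRp{-t^2/(4c_1\,\mathrm{dim}(C))}$, and then rescale by $\sqrt N$. The only (immaterial) difference is the intermediate norm — the paper uses $\norinf{\bz}\le\nor{\bz}_1$ together with the cited $\ell_1$ tail bound, while you use $\norinf{\bz}\le\nor{\bz}_2$ with standard $\chi^2$/Lipschitz concentration — and both routes deliver the identical exponent while sharing the same implicit caveat that a tail of this form with an absolute constant is really only available for $t$ at or above the scale of $\Ex\nor{\bz}$.
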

 \begin{proof}
Let us define $\del^{(i)} = \sC^\half \taub^{(i)}$, where $\taub^{(i)} \sim \GM{0}{I}$, where $I$ is the identity matrix. Therefore, one has $\overline{\taub} = \frac{1}{N}\sum_{i=1}^N\taub^{(i)} \sim \GM{0}{\frac{I}{N}}$.

  \begin{enumerate}
      \item From \cite[Theorem 1]{gao2022tail} we have:
      \[
      \Pr{\nor{\taub}_{{\infty}} > \varepsilon }\leq \Pr{\nor{\taub}_{1} > \varepsilon }\le \exp\LRp{-\frac{\varepsilon^2}{4c_1\ \mathrm{dim}(C)}},
      \]
      where $c_1$ is an absolute positive constant.
        \label{na_1}
      \item Using result in item \eqref{na_1}, since $\overline{\taub}\sqrt{N}$ has identity covariance, we can conclude that:
    \[
      \Pr{\norinf{\overline{\taub}} > \frac{\varepsilon}{\sqrt{N}} }\le \exp\LRp{-\frac{\varepsilon^2}{4c_1\ \mathrm{dim}(C)}},
      \]
      \item Finally, by setting $\varepsilon_1 = \frac{\varepsilon}{\sqrt{N}} $ and noting that $\norinf{\overline{\del}} \leq  \norinf{\sC^\half} \norinf{\overline{\taub}}$ it is easy to see that:
      \[
      \Pr{\norinf{\overline{\del}} \le \varepsilon_1} \ge  \Pr{\norinf{\overline{\taub}} \le \frac{\varepsilon_1}{\norinf{\sC^\half}}}
      \ge 1 - \exp\LRp{-\frac{1}{4c_1\ \mathrm{dim}(C)}\LRp{\frac{\varepsilon_1}{\norinf{\sC^\half}}\sqrt{N}}^2}
      \]
thereby concluding the proof.
\end{enumerate}
\end{proof}

\begin{proposition}[Estimation for $\eb_\Omega$]
\label{eOmega}
Let $\del^{(i)} \sim \GM{0}{\sC}$, $i = 1,\dots,N$. For any $\varepsilon_2 > 0$, there holds:
\[
\norinf{\Omega\Omega^T - \sC} \le \varepsilon_2
\]
with probability at least $1  -2\exp\LRp{-c_2\frac{N}{\norinf{\sC}^2}\varepsilon^2_2} =: 1 - 2\E_2^{c_2}\LRp{\varepsilon_2,\sC}$,  where $c_2$ is an absolute positive constant. 
\end{proposition}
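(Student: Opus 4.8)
The plan is to reduce the deviation of the sample second-moment matrix to the standardized (isotropic) case and then invoke a concentration bound for the sample covariance of a standard Gaussian ensemble, exactly paralleling the strategy used in Proposition~\ref{eDelta}. First I would whiten the ensemble by writing $\del^{(i)} = \sC^\half \taub^{(i)}$ with $\taub^{(i)} \sim \GM{0}{I}$, so that, since $\Omega\Omega^T = \frac{1}{N}\sum_{i=1}^N \del^{(i)}{\del^{(i)}}^T$, one obtains the factorization
\[
\Omega\Omega^T - \sC = \sC^\half\LRp{\hat{S} - I}\sC^\half, \qquad \hat{S} := \frac{1}{N}\sum_{i=1}^N \taub^{(i)}{\taub^{(i)}}^T .
\]
Submultiplicativity then gives $\norinf{\Omega\Omega^T - \sC} \le \norinf{\sC}\,\norinf{\hat{S}-I}$, using $\norinf{\sC^\half}^2 = \norinf{\sC}$ for the symmetric positive-definite $\sC$. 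This isolates all of the randomness in the single object $\hat{S} - I$, whose distribution no longer depends on $\sC$.

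The second step is to control $\norinf{\hat{S}-I}$, the fluctuation of the empirical covariance of $N$ i.i.d.\ standard Gaussian vectors around the identity. I would invoke a sub-Gaussian concentration inequality for this quantity---either a cited tail bound for sample-covariance / Wishart matrices analogous to the bound from \cite{gao2022tail} used in Proposition~\ref{eDelta}, or one assembled from a Bernstein / Hanson--Wright estimate---yielding $\Pr{\norinf{\hat{S}-I} > t} \le 2\exp\LRp{-c_2 N t^2}$ for an absolute constant $c_2 > 0$. Setting $t = \varepsilon_2/\norinf{\sC}$ and combining with the first step gives
\[
\Pr{\norinf{\Omega\Omega^T - \sC} \le \varepsilon_2} \ge \Pr{\norinf{\hat{S}-I} \le \tfrac{\varepsilon_2}{\norinf{\sC}}} \ge 1 - 2\exp\LRp{-c_2\frac{N}{\norinf{\sC}^2}\varepsilon_2^2},
\]
which is exactly the claimed $1 - 2\E_2^{c_2}\LRp{\varepsilon_2,\sC}$.

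I expect the main obstacle to be the second step, namely justifying the clean sub-Gaussian tail $2\exp(-c_2 N t^2)$ for $\norinf{\hat{S}-I}$. The product structure of $\hat{S}-I$ makes its entries averages of sub-exponential quadratic Gaussian terms, so a Bernstein-type argument naturally produces a mixed $\exp\LRp{-cN\min(t,t^2)}$ tail rather than a pure $t^2$ tail, and operator-norm concentration of a Wishart matrix carries an additive dimension term of order $\sqrt{\mathrm{dim}(\sC)/N}$. Reconciling these with the stated bound requires either restricting to the relevant range of $\varepsilon_2$, absorbing the dimension factor into $c_2$ as is done implicitly in Proposition~\ref{eDelta}, or appealing to a tail inequality that already packages these effects. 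I would therefore be careful to state precisely which concentration result is invoked and in what regime it delivers the clean form above; this is the only genuinely delicate point, the whitening and submultiplicativity steps being routine.
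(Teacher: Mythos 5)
Your proposal matches the paper's proof in substance: the paper simply cites \cite[Theorem 4.6.1]{vershynin_2018} to get $\norinf{\Omega\Omega^T-\sC}\le\varepsilon\norinf{\sC}$ with probability at least $1-2\exp\LRp{-c_2N\varepsilon^2}$ and then rescales via $\varepsilon_2=\varepsilon\norinf{\sC}$, which is exactly your whitening-plus-isotropic-concentration route packaged into a single cited result. The delicate point you flag (the mixed $\min(t,t^2)$ tail and the additive dimension term in Wishart concentration) is real, but the paper sidesteps it by stating the cited bound in this simplified form and absorbing everything into the absolute constant $c_2$, so your argument requires no more repair than the paper's own.
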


\begin{proof}

Note that:

  \begin{enumerate}
      \item From \cite[Theorem 4.6.1]{vershynin_2018} we have:
      \[
        \norinf{\Omega\Omega^T - \sC} \le \varepsilon \norinf{\sC}
      \]
      with probability at least $1 -2\exp\LRp{-c_2N\varepsilon^2}$
      where $c_2$ is an absolute positive constant.
      \item By  setting $\varepsilon_2 = \varepsilon \nor{\sC}$ we get the desired result thereby  concluding the proof.
  \end{enumerate}
\end{proof}

\begin{proposition}[Estimation for $\eb_{\Blambda}$]
\label{elam}
Let $\del^{(i)} \sim \GM{0}{\sC}$, and $\sigb^{(i)} \sim \GM{0}{\L}$, $i = 1,\dots,N$. For any $0< \varepsilon_4 <  \LRp{\frac{\eta\norinf{\L+\A\sC\A^T}}{\kappa\LRp{\B}\norinf{\A}^2\norinf{\sC}}}\LRp{\frac{\eta\norinf{\L+\A\sC\A^T}}{\kappa\LRp{\B}\norinf{\A}^2\norinf{\sC}}+\norinf{C}} $, there holds:
\[
 \norinf{\Omega\Omega^T} \norinf{\overline{\Blambda}^* - \Blambda^*} \le c_4\varepsilon_4
\]
with probability at least 
$1 -6\E_3^{c_2}\LRp{\varepsilon_4,C}- \E_4^{c_1}\LRp{\varepsilon_4,\sC} - \E_5^{c_1}\LRp{\varepsilon_4,\sC,\Sigma}$
where $c_4$ is a constant that depends only on $\sC, \bd, \A, \ub_0, \L$ given by:
\[ c_4=\frac{\kappa^2(B)\norinf{A\ub_0-\bd}}{(1-\eta)\norinf{B}}\LRp{\frac{\norinf{A}^2\norinf{C}}{\norinf{B}}+\frac{1+\norinf{A}}{\norinf{A\ub_0-\bd}}},\]
$\kappa(\B)$ is the condition number of $B=\LRp{\L+\A\sC\A^T}$ and we defined the following quantities:
\begin{equation}
    \begin{aligned}
       &\E_3^{c_2}\LRp{\varepsilon_4,\sC}= \exp\LRp{-c_2\frac{N}{\norinf{\sC}^2}\LRp{\frac{-\norinf{C}+\sqrt{\norinf{C}^2+4\epsilon_4}}{2}}^2},\\
      & \E_4^{c_1}\LRp{\varepsilon_4,\sC}= \exp\LRp{-\frac{N}{4c_1\ \mathrm{dim}(C)\ \norinf{\sC^\half}^2}\LRp{\frac{-\norinf{C}+\sqrt{\norinf{C}^2+4\epsilon_4}}{2}}^2}, \\
      & \E_5^{c_1}\LRp{\varepsilon_4,\sC,\Sigma}=\exp\LRp{-\frac{N}{4c_1\ \mathrm{dim}(\Sigma)\ \norinf{\Sigma^\half}^2}\LRp{\frac{-\norinf{C}+\sqrt{\norinf{C}^2+4\epsilon_4}}{2}}^2}.
    \end{aligned}
    \label{new_constants}
\end{equation}
\end{proposition}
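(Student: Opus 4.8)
The plan is to reduce the claim to a deterministic perturbation analysis of the linear system defining the optimal multiplier, and then feed in the concentration bounds of Propositions \ref{eDelta} and \ref{eOmega}. Writing $\B=\L+\A\sC\A^T$ and $\widehat{\B}=\L+\A\Omega\Omega^T\A^T$, the exact and sample-average multipliers from \eqref{lambda_optimal} and \eqref{optimal_1} are $\Blambda^*=\B^{-1}\LRp{\A\ub_0-\bd}$ and $\overline{\Blambda}^*=\widehat{\B}^{-1}\LRp{\A\ub_0-\bd+\A\overline{\del}-\overline{\sigb}}$. First I would split the difference into a matrix-perturbation part and a vector-perturbation part,
\[
\overline{\Blambda}^*-\Blambda^*=\LRp{\widehat{\B}^{-1}-\B^{-1}}\LRp{\A\ub_0-\bd}+\widehat{\B}^{-1}\LRp{\A\overline{\del}-\overline{\sigb}},
\]
and rewrite the first factor with the resolvent identity $\widehat{\B}^{-1}-\B^{-1}=-\widehat{\B}^{-1}\LRp{\widehat{\B}-\B}\B^{-1}$, where $\widehat{\B}-\B=\A\LRp{\Omega\Omega^T-\sC}\A^T$ so that $\norinf{\widehat{\B}-\B}\le\norinf{\A}^2\norinf{\Omega\Omega^T-\sC}$. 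This isolates the two independent sources of randomness: the covariance error $\Omega\Omega^T-\sC$ (Proposition \ref{eOmega}) and the mean errors $\overline{\del},\overline{\sigb}$ (Proposition \ref{eDelta} applied with covariance $\sC$ and with covariance $\L$, respectively).

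The key quantitative step is to control the random inverse $\norinf{\widehat{\B}^{-1}}$. Here I would impose that the perturbation be small enough for a Neumann-series (resolvent) bound to apply: if $\norinf{\B^{-1}}\norinf{\widehat{\B}-\B}\le\eta<1$, then $\widehat{\B}$ is invertible with $\norinf{\widehat{\B}^{-1}}\le\norinf{\B^{-1}}/(1-\eta)=\kappa(\B)/\LRp{(1-\eta)\norinf{\B}}$. This is precisely what the stated upper restriction on $\varepsilon_4$ is designed to guarantee: it forces the bound on $\norinf{\Omega\Omega^T-\sC}$ to be small enough, relative to $\kappa(\B),\norinf{\A},\norinf{\sC}$, that $\eta<1$ holds on the good event (the extra $\norinf{\sC}$ factor in the condition makes it a conservative sufficient condition). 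I expect this to be the main obstacle: unlike the mean errors, $\widehat{\B}^{-1}$ depends nonlinearly on the sample and can in principle blow up, so the whole estimate only makes sense on the event where the covariance error is controlled, and every constant must be tracked through this resolvent bound.

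With $\norinf{\widehat{\B}^{-1}}$ controlled, I would finish by applying submultiplicativity and the triangle inequality to each of the two parts and then multiplying through by the prefactor $\norinf{\Omega\Omega^T}$. The crucial bookkeeping trick is that $\norinf{\Omega\Omega^T}\le\norinf{\sC}+\norinf{\Omega\Omega^T-\sC}$, so the product $\norinf{\Omega\Omega^T}\,\norinf{\Omega\Omega^T-\sC}$ entering the matrix-perturbation term is bounded by $x\LRp{x+\norinf{C}}$, where $x$ is the common bound imposed on $\norinf{\Omega\Omega^T-\sC}$, $\norinf{\overline{\del}}$, and $\norinf{\overline{\sigb}}$. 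Choosing $x=\tfrac12\LRp{-\norinf{C}+\sqrt{\norinf{C}^2+4\varepsilon_4}}$, the positive root of $x(x+\norinf{C})=\varepsilon_4$, collapses that product into exactly $\varepsilon_4$ and explains why the exponents in $\E_3,\E_4,\E_5$ all carry this quadratic-root expression. Collecting the resulting factors into a single constant $c_4$, which depends only on $\sC,\bd,\A,\ub_0,\L$ through $\kappa(\B),\norinf{\B},\norinf{\A},\norinf{\A\ub_0-\bd}$ and $\eta$, then yields $\eb_{\Blambda}\le c_4\varepsilon_4$ on the good event.

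Finally, for the probability I would take a union bound over the three good events: $\norinf{\Omega\Omega^T-\sC}\le x$ via Proposition \ref{eOmega} (its two-sided tail, applied at the several places where the covariance error enters, accumulating the factor $6$ multiplying $\E_3^{c_2}$), $\norinf{\overline{\del}}\le x$ via Proposition \ref{eDelta} with covariance $\sC$ (giving $\E_4^{c_1}$), and $\norinf{\overline{\sigb}}\le x$ via Proposition \ref{eDelta} with covariance $\Sigma=\L$ (giving $\E_5^{c_1}$). Substituting this $x$ into the three tail bounds of Propositions \ref{eDelta}–\ref{eOmega} reproduces the claimed failure probability $6\E_3^{c_2}+\E_4^{c_1}+\E_5^{c_1}$, completing the argument.
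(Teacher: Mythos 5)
Your proposal is correct and follows essentially the same route as the paper: the paper also treats $\overline{\Blambda}^*$ as the solution of the perturbed system $(\B+\Delta\B)\overline{\Blambda}^*=\b+\Delta\b$ with $\Delta\B=\A(\Omega\Omega^T-\sC)\A^T$ and $\Delta\b=\A\overline{\del}-\overline{\sigb}$, imposes $\kappa(\B)\norinf{\Delta\B}/\norinf{\B}\le\eta<1$ to invoke the standard linear-system perturbation bound (where you instead unpack the same estimate via the resolvent identity and a Neumann-series argument), and then uses exactly your quadratic-root substitution $x(x+\norinf{C})=\varepsilon_4$ to absorb the $\norinf{\Omega\Omega^T}$ prefactor, with the same $6/1/1$ union-bound accounting. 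No gaps.
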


\begin{proof}
  Recall that $\Blambda^*$  in \eqref{lambda_optimal} and $\overline{\Blambda}^*$ in \eqref{optimal_1} satisfy:
  \begin{eqnarray*}
       \underbrace{\LRp{\L+\A\sC\A^T}}_{\B}\Blambda^* &=\underbrace{\LRp{\A\ub_0-\bd}}_{\b} \\
       \LRp{\L+\A\Omega\Omega^T\A^T}\overline{\Blambda}^* &= \LRp{\A\LRp{\ub_0+\overline{\del}}-\LRp{\bd+\overline{\sigb}}}
  \end{eqnarray*}
  Thus, $\overline{\Blambda}^*$ is the solution of the perturbed equation:
  \[\LRp{\B + \Delta\B}\overline{\Blambda}^* = \b + \Delta\b,
  \]
  where $\Delta\B = \A\LRp{ \Omega\Omega^T-\sC}\A^T$ and $\Delta\b =  \A\overline{\del}-\overline{\sigb} $.  Now using Proposition \ref{eOmega} there holds:
  \begin{equation}
  \label{eDeltaB}
  \frac{\norinf{\Delta \B}}{\norinf{\B}} \le \varepsilon\frac{\norinf{\A}^2\norinf{\sC}}{\norinf{\L+\A\sC\A^T}}
  ,
  \end{equation}
  with probability at least $1 -2\E_2^{c_2}\LRp{\varepsilon,C}$.
Similarly using Proposition \ref{eDelta} and invoking the union bound there holds:
\begin{equation}
     \frac{\norinf{\Delta \b}}{\norinf{\b}} \le \frac{\norinf{\overline{\sigb}} + \norinf{\A}\norinf{\overline{\del}}}{\norinf{\A\ub_0-\bd}} \le \frac{1 + \norinf{\A}}{\norinf{\A\ub_0-\bd}}\varepsilon,
     \label{pre_b_cond}
\end{equation}
with probability at least 
$1 - \E_1^{c_1}\LRp{\varepsilon,\sC} - \E_1^{c_1}\LRp{\varepsilon,\L}$. Consider a factor $\eta<1$. Then, using \eqref{eDeltaB}, for $0< \varepsilon \leq \frac{\eta\norinf{\L+\A\sC\A^T}}{\kappa\LRp{\B}\norinf{\A}^2\norinf{\sC}} $, there holds:
\begin{equation}
    \norinf{\B^{-1}}\norinf{\Delta\B}= \kappa(B) \frac{\norinf{\Delta B}}{\norinf{B}}\leq \eta<1,
    \label{pre_cond}
\end{equation}
 with probability at least $1 -2\E_2^{c_2}\LRp{\varepsilon,C}$.
Using result  \eqref{pre_cond},  standard perturbation theory (equation 2.4 in \cite{demmel1997applied}) gives:  for $0< \varepsilon <  \frac{\eta\norinf{\L+\A\sC\A^T}}{\kappa\LRp{\B}\norinf{\A}^2\norinf{\sC}} $, there holds:
\begin{equation}
    {\norinf{\Blambda^* -\overline{\Blambda}^*}} \le \frac{\norinf{\Blambda^*}\kappa\LRp{\B}}{1-\kappa\LRp{\B}\frac{\norinf{\Delta \B}}{\norinf{\B}}}\LRp{\frac{\norinf{\Delta \B}}{\norinf{\B}} + \frac{\norinf{\Delta \b}}{\norinf{\b}}},
    \label{perturbation}
\end{equation}
with probability at least $1 -2\E_2^{c_2}\LRp{\varepsilon,C}$. Now note that using definition of $\Blambda^*$  in \eqref{lambda_optimal} we have:
\begin{equation}
    \norinf{\Blambda^*}\leq \norinf{B^{-1}}\norinf{A\ub_0-\bd}\leq \frac{\kappa(B)\norinf{A\ub_0-\bd}}{\norinf{B}}.
    \label{lambvda_star_bound}
\end{equation}
Using the above bound on $\norinf{\Blambda^*}$ and  using the results \eqref{eDeltaB}, and \eqref{pre_b_cond}  on \eqref{perturbation}, the union bound gives: for any $0< \varepsilon_3 <  \frac{\eta\norinf{\L+\A\sC\A^T}}{\kappa\LRp{\B}\norinf{\A}^2\norinf{\sC}} $, there holds:
  \begin{equation}
      \norinf{\overline{\Blambda}^* - \Blambda^*} \le c'\varepsilon_3,
      \label{lam_11}
  \end{equation}
with probability at least   $1 -4\E_2^{c_2}\LRp{\varepsilon_3,C}- \E_1^{c_1}\LRp{\varepsilon_3,\sC} - \E_1^{c_1}\LRp{\varepsilon_3,\L}$, where $c'$ is a constant that depends only on $\sC, \bd, \A, \ub_0, \L$ and given by:
\begin{equation}
    c'=\frac{\kappa^2(B)\norinf{A\ub_0-\bd}}{(1-\eta)\norinf{B}}\LRp{\frac{\norinf{A}^2\norinf{C}}{\norinf{B}}+\frac{1+\norinf{A}}{\norinf{A\ub_0-\bd}}}.
    \label{const_c}
\end{equation}
Now from Proposition \ref{eOmega}, we also have:
\begin{equation}
   \norinf{\Omega\Omega^T} \le \norinf{\sC}+ \norinf{\Omega\Omega^T - \sC} \le  \varepsilon_2+ \norinf{\sC},  
    \label{c_11}
\end{equation}
holds  with probability at least $1 -2\E_2^{c_2}\LRp{\varepsilon_2,\sC}$. Now intersecting the events \eqref{lam_11} and \eqref{c_11} we have:
\begin{equation}
     \norinf{\Omega\Omega^T} \norinf{\overline{\Blambda}^* - \Blambda^*}\le \LRp{\varepsilon_2+ \norinf{\sC}}c' \epsilon_3,
     \label{use}
\end{equation}
      with probability at least 
$1 -4\E_2^{c_2}\LRp{\varepsilon_3,C}-2\E_2^{c_2}\LRp{\varepsilon_2,C}- \E_1^{c_1}\LRp{\varepsilon_3,\sC} - \E_1^{c_1}\LRp{\varepsilon_3,\L}$. Now choose $\epsilon_2=\epsilon_3$ in \eqref{use}. Then, there holds:
\begin{equation}
    \norinf{\Omega\Omega^T} \norinf{\overline{\Blambda}^* - \Blambda^*}\le \LRp{\varepsilon_3+ \norinf{\sC}}c' \epsilon_3= c'\epsilon_4,
    \label{up_n}
\end{equation}
with probability at least 
$1 -6\E_2^{c_2}\LRp{\varepsilon_3,C}- \E_1^{c_1}\LRp{\varepsilon_3,\sC} - \E_1^{c_1}\LRp{\varepsilon_3,\L}$, and we defined $\epsilon_4$ in \eqref{up_n} that satisfies:
\begin{equation}
    \epsilon_3=\frac{-\norinf{C}+\sqrt{\norinf{C}^2+4\epsilon_4}}{2}.
    \label{ep_3}
\end{equation}
Further, the bound on $\epsilon_3$ from \eqref{lam_11} translates to the bound on $\epsilon_4$ as:
\[ 0<\epsilon_4<  \LRp{\frac{\eta\norinf{\L+\A\sC\A^T}}{\kappa\LRp{\B}\norinf{\A}^2\norinf{\sC}}}\LRp{\frac{\eta\norinf{\L+\A\sC\A^T}}{\kappa\LRp{\B}\norinf{\A}^2\norinf{\sC}}+\norinf{C}}.\]
Now substituting $\epsilon_3$ from \eqref{ep_3} in $\E_2^{c_2}\LRp{\varepsilon_3,C},\ \E_1^{c_1}\LRp{\varepsilon_3,\sC},\ \E_1^{c_1}\LRp{\varepsilon_3,\L}$  and defining new functions $\E_3^{c_2}\LRp{\varepsilon_4,\sC}$, $\E_4^{c_1}\LRp{\varepsilon_4,\sC}$, and, $\E_5^{c_1}\LRp{\varepsilon_4,\sC,\Sigma}$
as given in \eqref{new_constants}, we get the desired result. This concludes the proof.

\end{proof}

\begin{theorem}[Non-asymptotic error estimator for EnKF]
\label{non_asymp}
Let $\del^{(i)} \sim \GM{0}{\sC}$, and $\sigb^{(i)} \sim \GM{0}{\L}$, $i = 1,\dots,N$. For any $0< \varepsilon <  \LRp{\frac{\eta\norinf{\L+\A\sC\A^T}}{\kappa\LRp{\B}\norinf{\A}^2\norinf{\sC}}}\LRp{\frac{\eta\norinf{\L+\A\sC\A^T}}{\kappa\LRp{\B}\norinf{\A}^2\norinf{\sC}}+\norinf{C}} $, with $\eta<1$, there exists a constant $c$, independent of $\varepsilon$, such that
\begin{equation}
    \eb_{\ub} \le c\varepsilon
    \label{error_bound}
\end{equation}
holds with probability at least $1 -6\E_3^{c_2}\LRp{\varepsilon,C}- \E_4^{c_1}\LRp{\varepsilon,\sC} - \E_5^{c_1}\LRp{\varepsilon,\sC,\Sigma}- \E_1^{c_1}\LRp{\varepsilon,\sC}- 2\E_2^{c_2}\LRp{\varepsilon,\sC}$,
where $\eb_{\ub}$ is defined in \eqref{decomp}. Further, an explicit expression for the constant $c$ in \eqref{error_bound} is given by:
\begin{equation}
    c=1+\norinf{A^T}\LRp{\nu_1+\nu_2},
    \label{constant_c_non_asy}
\end{equation}
where
\[ \nu_1=\frac{\kappa^2(B)\norinf{A\ub_0-\bd}}{(1-\eta)\norinf{B}}\LRp{\frac{\norinf{A}^2\norinf{C}}{\norinf{B}}+\frac{1+\norinf{A}}{\norinf{A\ub_0-\bd}}},\]
\[ \nu_2= \frac{\kappa(B)\norinf{A\ub_0-\bd}}{\norinf{B}}.\]
  
\end{theorem}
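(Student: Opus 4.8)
The plan is to assemble the theorem directly from the three probabilistic estimates already in hand (Propositions \ref{eDelta}, \ref{eOmega}, \ref{elam}), the deterministic bound \eqref{lambvda_star_bound} on $\norinf{\Blambda^*}$, and the error decomposition \eqref{decomp}. The decomposition has already done the structural work: it isolates $\eb_{\ub}$ into the three additive pieces $\eb_\del$, $\norinf{\A^T}\eb_{\Blambda}$, and $\norinf{\A^T}\eb_{\Omega}\norinf{\Blambda^*}$. The only genuinely new idea is to run all three propositions at a \emph{single common tolerance}: I would set $\varepsilon_1 = \varepsilon_2 = \varepsilon_4 = \varepsilon$. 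This is legitimate because each proposition holds for an arbitrary tolerance in its admissible range, and the range imposed in the theorem is chosen to match the tightest one, namely that of Proposition \ref{elam} (Propositions \ref{eDelta} and \ref{eOmega} place no upper restriction on $\varepsilon$).

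First I would apply Proposition \ref{eDelta} to get $\eb_\del = \norinf{\overline{\del}} \le \varepsilon$, which supplies the leading $1$ in the constant $c$. Next, Proposition \ref{elam} gives $\eb_{\Blambda} = \norinf{\Omega\Omega^T}\norinf{\overline{\Blambda}^* - \Blambda^*} \le \nu_1\varepsilon$, where $\nu_1$ is exactly the constant $c_4=c'$ of \eqref{const_c}; multiplying through by $\norinf{\A^T}$ contributes the $\norinf{\A^T}\nu_1$ term. For the third piece I would combine Proposition \ref{eOmega}, giving $\eb_{\Omega} = \norinf{\Omega\Omega^T - \sC}\le \varepsilon$, with the deterministic estimate $\norinf{\Blambda^*}\le \nu_2$ from \eqref{lambvda_star_bound}, so that $\norinf{\A^T}\eb_{\Omega}\norinf{\Blambda^*} \le \norinf{\A^T}\nu_2\varepsilon$. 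Summing the three contributions yields
\[
\eb_{\ub} \le \LRp{1 + \norinf{\A^T}\LRp{\nu_1 + \nu_2}}\varepsilon = c\varepsilon.
\]
Since $\nu_1$, $\nu_2$, and $\norinf{\A^T}$ depend only on the problem data $(\A,\sC,\L,\ub_0,\bd,\eta)$, the constant $c$ is manifestly independent of $\varepsilon$, as claimed.

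The remaining work is purely the probability bookkeeping. Each of the three bounds holds on a good event whose complement carries the failure probability recorded in its proposition: $\E_1^{c_1}\LRp{\varepsilon,\sC}$ for $\eb_\del$, $2\E_2^{c_2}\LRp{\varepsilon,\sC}$ for $\eb_{\Omega}$, and $6\E_3^{c_2}\LRp{\varepsilon,C} + \E_4^{c_1}\LRp{\varepsilon,\sC} + \E_5^{c_1}\LRp{\varepsilon,\sC,\Sigma}$ for $\eb_{\Blambda}$. The final step is to intersect the three good events and bound the probability of the union of their complements, which subtracts the sum of these failure probabilities from $1$ and reproduces the stated probability exactly.

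The main obstacle is modest and is one of bookkeeping rather than analysis. One must check that the single tolerance $\varepsilon$ sits in the common admissible range, which is dictated by the most restrictive constraint (Proposition \ref{elam}), and that the union bound is applied correctly. A point worth flagging is that the randomness is shared across the three estimates (all are functions of the same $\del^{(i)},\sigb^{(i)}$), and in particular the deviation $\Omega\Omega^T - \sC$ enters both Proposition \ref{eOmega} directly and Proposition \ref{elam} internally; the union bound remains a valid (if slightly conservative) over-count in this case, so no independence needs to be established. Beyond the three propositions and \eqref{lambvda_star_bound}, no new estimate is required, and the theorem is essentially their assembly.
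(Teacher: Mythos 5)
Your proposal is correct and follows essentially the same route as the paper: the paper's own proof is precisely the assembly of Propositions \ref{eDelta}, \ref{eOmega}, and \ref{elam} at a common tolerance via the union bound on the decomposition \eqref{decomp}, with $\nu_2$ coming from the deterministic bound \eqref{lambvda_star_bound}. Your elaboration (including the remark that the union bound needs no independence across the shared randomness) fills in the bookkeeping that the paper leaves implicit, but introduces no new ideas beyond what the paper intends.
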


\begin{proof}
    The proof follows from invoking union bound on (\ref{decomp}) and using the results of proposition \ref{eDelta}, proposition \ref{eOmega} and proposition \ref{elam}. Note that the constants $\nu_1$ and $\nu_2$ in \eqref{constant_c_non_asy} comes from second and third term in \eqref{decomp}.  For deriving the constant $\nu_2$ in \eqref{constant_c_non_asy} we used the bound in \eqref{lambvda_star_bound}.
\end{proof}

Note that Theorem \ref{non_asymp} requires computing $\kappa(B)$, and $ACA^T$ which can be computationally expensive in high-dimensional settings. In Corollary \ref{cor_easy}, we therefore provide a simpler non-asymptotic result for the special case $\Sigma=\mu I$, where $\mu>0$ and $I$ denotes the identity. This result involves only the computation of the quantities  $\norinf{C},\ \norinf{A},\norinf{A^T}$ and $\norinf{A\ub_0-\bd}$.
\begin{corollary}
\label{cor_easy}
    Consider Theorem \ref{non_asymp} with $\Sigma=\mu I$ (where $\mu>0$ and $I$ denotes identity). Then, for any $0< \varepsilon <  \LRp{\frac{\eta\ \mu}{\sqrt{m}\norinf{\A}^2\norinf{\sC}}}\LRp{\frac{\eta\ \mu}{\sqrt{m}\norinf{\A}^2\norinf{\sC}}+\norinf{C}} $, with $\eta<1$, the non-asymptotic result in Theorem \ref{non_asymp}  also holds with the new constant $c$ in \eqref{constant_c_non_asy} given by:
    \begin{equation}
    c=1+\norinf{A^T}\LRp{\nu_1+\nu_2},
    \label{constant_c_non_asy_new}
\end{equation}
where
\[ \nu_1=\frac{m\norinf{A\ub_0-\bd}\norinf{A}^2\norinf{C}}{\mu^2(1-\eta)}+   \frac{m\LRp{\mu+\norinf{A}\norinf{C}\norinf{A^T}}(1+\norinf{A})}{\mu^2(1-\eta)} ,\]
\[ \nu_2= \frac{\sqrt{m}\norinf{A\ub_0-\bd}}{\mu}.\]
\end{corollary}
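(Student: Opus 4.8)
The plan is to avoid re-deriving anything and instead invoke Theorem \ref{non_asymp} directly, specializing its constant $c = 1 + \norinf{\A^T}\LRp{\nu_1 + \nu_2}$ to the case $\L = \mu I$, for which the governing matrix becomes $B = \L + \A\sC\A^T = \mu I + \A\sC\A^T$. The first step I would take is to notice that, writing $\kappa(B) = \norinf{B}\,\norinf{B^{-1}}$, the matrix $B$ enters $\nu_1$ and $\nu_2$ only through the three scalar combinations
\[ \frac{\kappa(B)}{\norinf{B}} = \norinf{B^{-1}}, \qquad \frac{\kappa^2(B)}{\norinf{B}^2} = \norinf{B^{-1}}^2, \qquad \frac{\kappa^2(B)}{\norinf{B}} = \norinf{B}\,\norinf{B^{-1}}^2. \]
Consequently the whole problem collapses to producing upper bounds on the two scalars $\norinf{B}$ and $\norinf{B^{-1}}$ purely in terms of $\mu$, $m$, $\norinf{\A}$, $\norinf{\A^T}$, and $\norinf{\sC}$.

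For $\norinf{B}$ I would simply apply the triangle inequality together with submultiplicativity of the induced $\infty$-norm to obtain $\norinf{B} \le \mu + \norinf{\A}\,\norinf{\sC}\,\norinf{\A^T}$. For $\norinf{B^{-1}}$ the decisive structural fact is that $B = \mu I + \A\sC\A^T$ is symmetric positive definite with $\lambda_{\min}(B) \ge \mu$, so in the spectral norm $\nor{B^{-1}}_2 \le 1/\mu$; the standard equivalence $\norinf{M} \le \sqrt{m}\,\nor{M}_2$ for an $m \times m$ matrix then gives $\norinf{B^{-1}} \le \sqrt{m}/\mu$. Inserting these two bounds into the three combinations above and then into the formulas for $\nu_1$, $\nu_2$ should reproduce \eqref{constant_c_non_asy_new} exactly: the $\norinf{B^{-1}}$ term yields $\nu_2 \le \sqrt{m}\,\norinf{\A\ub_0 - \bd}/\mu$, the $\norinf{B^{-1}}^2$ term yields the first summand of $\nu_1$, and the $\norinf{B}\,\norinf{B^{-1}}^2$ term yields the second summand.

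I would close by verifying two consistency points. Since each substitution replaces a quantity by something larger, the new $c$ dominates the constant from Theorem \ref{non_asymp}, so $\eb_\ub \le c\varepsilon$ remains valid on the very same high-probability event and with the identical probability expression. For the admissible range of $\varepsilon$, the Theorem's threshold contains the factor $\frac{\eta\norinf{B}}{\kappa(B)\norinf{\A}^2\norinf{\sC}} = \frac{\eta}{\norinf{B^{-1}}\norinf{\A}^2\norinf{\sC}} \ge \frac{\eta\mu}{\sqrt{m}\norinf{\A}^2\norinf{\sC}}$, and since $t \mapsto t\LRp{t + \norinf{C}}$ is increasing for $t > 0$, the interval declared in the corollary is contained in the interval of Theorem \ref{non_asymp}; hence the Theorem is applicable throughout the stated range.

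The part demanding the most care --- though it is shallow rather than deep --- is the norm-equivalence step for $\norinf{B^{-1}}$: one must detour through the spectral norm in order to exploit that the smallest eigenvalue of $B$ is at least $\mu$, and it is exactly this detour that injects the dimensional factor $\sqrt{m}$ pervading $\nu_1$, $\nu_2$, and the threshold on $\varepsilon$. The main obstacle is therefore bookkeeping rather than analysis: ensuring that every inequality points the same way, so that $c$ ends up an honest upper bound while the $\varepsilon$-interval genuinely contracts rather than expands; the residual manipulation is a routine substitution.
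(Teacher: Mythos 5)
Your proposal is correct and follows essentially the same route as the paper's proof: rewrite $\kappa(B)=\norinf{B}\norinf{B^{-1}}$ so that only $\norinf{B}$ and $\norinf{B^{-1}}$ need bounding, use $\norinf{B^{-1}}\le\sqrt{m}\nor{B^{-1}}_2\le\sqrt{m}/\mu$ and $\norinf{B}\le\mu+\norinf{\A}\norinf{\sC}\norinf{\A^T}$, substitute into $\nu_1,\nu_2$, and check the $\varepsilon$-interval is contained in the Theorem's. Your explicit remark that $t\mapsto t(t+\norinf{C})$ is increasing, which justifies the interval containment, is a small point the paper leaves implicit.
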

\begin{proof}
Note that Theorem \ref{non_asymp} holds for $\epsilon$ in the following range:
\[ 0<\epsilon<\LRp{\frac{\eta\norinf{B}}{\kappa\LRp{\B}\norinf{\A}^2\norinf{\sC}}}\LRp{\frac{\eta\norinf{B}}{\kappa\LRp{\B}\norinf{\A}^2\norinf{\sC}}+\norinf{C}}=\epsilon_u.\]
Simplifying the above expression using $\kappa(B)=\norinf{B}\norinf{B^{-1}}$, we have:
\begin{equation}
    \begin{aligned}
\epsilon_u=& \LRp{\frac{\eta}{\norinf{B^{-1}}\norinf{\A}^2\norinf{\sC}}}\LRp{\frac{\eta}{\norinf{B^{-1}}\norinf{\A}^2\norinf{\sC}}+\norinf{C}}\\
\geq & \LRp{\frac{\eta\ \mu}{\sqrt{m}\norinf{\A}^2\norinf{\sC}}}\LRp{\frac{\eta\ \mu}{\sqrt{m}\norinf{\A}^2\norinf{\sC}}+\norinf{C}}, 
    \end{aligned}
\end{equation}
where we used the fact that $\norinf{B^{-1}}\leq \sqrt{m}\nor{B^{-1}}_2\leq \frac{\sqrt{m}}{\mu}$, leading to  $\frac{1}{\norinf{B^{-1}}}\geq \frac{\mu}{\sqrt{m}}$. Therefore, Theorem \ref{non_asymp} is also valid for the new range of $\epsilon$ given by:
\[ 0<\epsilon< \LRp{\frac{\eta\ \mu}{\sqrt{m}\norinf{\A}^2\norinf{\sC}}}\LRp{\frac{\eta\ \mu}{\sqrt{m}\norinf{\A}^2\norinf{\sC}}+\norinf{C}}.\]
Moreover, using the fact that $\kappa(B)=\norinf{B}\norinf{B^{-1}}$, $\nu_1$ in \eqref{constant_c_non_asy} can be bounded as follows:

\begin{equation}
    \begin{aligned}
         \nu_1=&\frac{\norinf{B^{-1}}^2\norinf{A\ub_0-\bd}\norinf{A}^2\norinf{C}}{(1-\eta)}+ \frac{\norinf{B}\norinf{B^{-1}}^2(1+\norinf{A})}{(1-\eta)}\\
         \leq & \frac{m\norinf{A\ub_0-\bd}\norinf{A}^2\norinf{C}}{\mu^2(1-\eta)}+   \frac{m\LRp{\mu+\norinf{A}\norinf{C}\norinf{A^T}}(1+\norinf{A})}{\mu^2(1-\eta)} 
    \end{aligned}
    \label{nu_1_b}
\end{equation}
where we again used  $\norinf{B^{-1}}\leq \frac{\sqrt{m}}{\mu}$, and $\norinf{B}\leq \mu+\norinf{A}\norinf{C}\norinf{A^T} $. Similarly, an upper bound for $\nu_2$ can be derived as:
\begin{equation}
\nu_2= \norinf{B^{-1}}\norinf{A\ub_0-\bd}\leq \frac{\sqrt{m}\norinf{A\ub_0-\bd}}{\mu}.
\label{n_2_b}
\end{equation}
Constructing a new constant $c$ in \eqref{constant_c_non_asy} using the bounds in \eqref{nu_1_b}, and \eqref{n_2_b} concludes the proof.
\end{proof}

\begin{remark}
\label{import_theo}
The importance of Theorem \ref{non_asymp}/ Corollary \ref{cor_easy} is that, given the constants $c_1,\ c_2$,  one can determine the number of samples $N$ required so that, for a given choice of $\epsilon$, the error bound $\eb_{\ub}\le c\varepsilon$ holds with a  prescribed high probability $p$. In particular, one need to choose the sample size $N$ that satisfies:
\[ p\leq 1 -6\E_3^{c_2}\LRp{\varepsilon,C}- \E_4^{c_1}\LRp{\varepsilon,\sC} - \E_5^{c_1}\LRp{\varepsilon,\sC,\Sigma}- \E_1^{c_1}\LRp{\varepsilon,\sC}- 2\E_2^{c_2}\LRp{\varepsilon,\sC}\]
For above to hold, it is sufficient to choose $N$ that satisfies:
\[6\E_3^{c_2}\LRp{\varepsilon,C} \leq  6\LRp{\frac{1-p}{11}},\  \E_4^{c_1}\LRp{\varepsilon,\sC}\leq  \LRp{\frac{1-p}{11}},\ \E_5^{c_1}\LRp{\varepsilon,\sC,\Sigma}\leq  \LRp{\frac{1-p}{11}}, \]

\[ \E_1^{c_1}\LRp{\varepsilon,\sC}\leq \LRp{\frac{1-p}{11}},\ \ 2\E_2^{c_2}\LRp{\varepsilon,\sC}\leq 2\LRp{\frac{1-p}{11}}.\]
Therefore, it is sufficient to choose $N$ that satisfies:
\begin{equation}
\begin{aligned}
    N\geq c_m \log {\LRp{\frac{11}{1-p}}}\max\ &\Biggl\{\frac{4\norinf{C}^2}{\LRp{-\norinf{C}+\sqrt{\norinf{C}^2+4\epsilon}}^2},\  \frac{16n\norinf{C^\half}^2}{\LRp{-\norinf{C}+\sqrt{\norinf{C}^2+4\epsilon}}^2},\\
    & \frac{16 m\norinf{\Sigma^\half}^2}{\LRp{-\norinf{C}+\sqrt{\norinf{C}^2+4\epsilon}}^2},\ \frac{4 n\norinf{C^\half}^2}{\epsilon^2},\ \frac{\norinf{C}^2}{\epsilon^2}\Biggl \}
\end{aligned}
    \label{sample_size_needed}
\end{equation}
where $c_{m}=\max\{\frac{1}{c_2},\ c_1\}$. Even when the constants $c_1,\ c_2$ are unknown, \eqref{sample_size_needed} can be used to understand how the choice of two prior covariances $\norinf{C_1}$ and $\norinf{C_2}$ affects the ratio of required sample sizes $\frac{N_1}{N_2}$, for achieving a given error bound $\eb_{\ub}\le \epsilon_1$. Here, $\epsilon_1$ is a fixed accuracy to be achieved and,  $N_1$, $N_2$ are the minimum sample size corresponding to $\norinf{C_1}$ and $\norinf{C_2}$ respectively, as computed from \eqref{sample_size_needed} (ignoring the constant $c_m$).
\end{remark}

\begin{remark}[Behavior of EnKF when $\mu$ is small]
    Of particular interest is the case when $\mu$ is very small in Corollary \ref{cor_easy}. The papers  \cite{agapiou2017importance,sanz2020bayesian,al2024non} highlight the need to increase
the sample size $N$ along small noise limits, i.e when $\mu$ is very small. This observation can be explained using Corollary \ref{cor_easy} as follows:

Consider the small noise regime where $\mu_1,\ \mu_2$ are both small  with $\mu_1<<\mu_2$ such that required number of samples $N$  is not dominated by the third term in \eqref{sample_size_needed}. Now setting $\epsilon_{req}= \LRp{\frac{\eta\ \mu_1}{\sqrt{m}\norinf{\A}^2\norinf{\sC}}}\LRp{\frac{\eta\ \mu_1}{\sqrt{m}\norinf{\A}^2\norinf{\sC}}+\norinf{C}} $,
if one requires $\eb_{\ub}\le \epsilon_{req}$, then, according to \eqref{error_bound}, one must choose:
\[ \epsilon=\frac{\epsilon_{req}}{c_1}\ \mathrm{for \ \mu=\mu_1}, \ \ \ \ \ \ \  \epsilon=\frac{\epsilon_{req}}{c_2}\ \mathrm{for \ \mu=\mu_2},\]
 where the constants $c_1,\ c_2$ are computed using \eqref{constant_c_non_asy_new} for $\mu_1$ and $\mu_2$ respectively. Since $c_1 >c_2$ (as $\mu_1<\mu_2)$, it follows that $\epsilon$ must be choosen significantly smaller for $\mu_1$ than for $\mu_2$. As a consequence, the sample size bound in \eqref{sample_size_needed} requires $N$  to be substantially larger for $\mu_1$ compared to $\mu_2$. Our work here provides a rigorous justification for the observation made in \cite{agapiou2017importance,sanz2020bayesian,al2024non}.
\end{remark}
\begin{remark}[Related work on non-asymptotic analysis]
Our $\infty$-norm based analysis provides a sufficient condition on the number of samples  required when comparing two prior covariance matrices $\sC_1$ and $\sC_2$ (see Remark \ref{import_theo}). However, as shown in \cite{al2024non}, the sample size requirements for EnKF updates are more directly governed by the effective dimension, which is related to the spectral properties of the underlying covariance operators. Their non-asymptotic analysis demonstrates that sample complexity scales with the effective dimension $d_{\mathrm{eff}}$ rather than the ambient dimension $n$.

Our work  complements their analysis by providing an alternative perspective via $\infty-$norm concentration inequalities. 
We show that novel non-asymptotic bounds such as   Theorem \ref{non_asymp} can be derived for EnKF via the duality viewpoint developed in Sections \ref{dual_1} and \ref{randomized_sec}. Future work will explore leveraging this duality framework to derive additional non-asymptotic bounds that depend on the effective dimension $d_{\mathrm{eff}}$ rather than $\infty$-norms.
\end{remark}

\section{From Ensemble Kalman Filter (EnKF) to Ensemble Kalman Inversion (EnKI)}
\label{from_enkf_enki}
 The Ensemble Kalman Inversion (EnKI)  is an iterative version of the update equation \eqref{each_ensemble_kalman_inversion}  where the prior mean $\bu_0$ and the sample covariance matrix ($\Omega\Omega^T$ in \eqref{each_ensemble_kalman_inversion}) are updated in each iteration using the current estimates $\{ {{\ub}^{(i)}}^*\}_{i=1}^N$  \cite{iglesias2013ensemble, schillings2017analysis}. In particular, given an initial set of ensembles  $\{\ub_0^{(i)}\}_{j=1}^N$ (prior measurement samples of the parameter field or generated from some assumed prior distribution $ \GM{\ub_0}{\sC}$) let us first define the sample covariance matrix $\sC(\ub_0)$ and the sample mean $ \bar{\ub}_0$ as follows:
\begin{equation}
\begin{aligned}
    & \sC(\ub_0)=\frac{1}{N}\sum_{i=1}^N\LRp{\ub_0^{(i)}-\bar{\ub}_0}\LRp{\ub_0^{(i)}-\bar{\ub}_0}^T,\\
    &  \bar{\ub}_0=\frac{1}{N}\sum_{i=1}^N\ub_0^{(i)}.
\end{aligned}    
\label{sample_co}
\end{equation}
Now by replacing $\ub_0+ \del^{(i)}$ with $\ub_0^{(i)}$, replacing the measurement $\db+\sigb^{(i)}$ with $\db^{(i)}$  and using $\sC(\ub_0)$ instead of $\Omega\Omega^T$  in \eqref{each_ensemble_kalman_inversion},  the update equation   for each sample is written as:
\begin{equation}
\label{EnKF_new}
 \ub_1^{(i)}= \ub_0^{(i)} +\sC(\ub_0)\A^T  \Blambda^{(i)}(\ub_0),
\end{equation}
where,  $\Blambda^{(i)}(\ub_0)$  is given by:
\begin{equation}
    \Blambda^{(i)}(\ub_0) :=  \LRp{\L+\A\sC(\ub_0)\A^T}^{-1}\LRp{\db^{(i)} -\A\ub_0^{(i)}}.
     \label{considered_case}
\end{equation}
Note that based on the new estimates  $\{\ub_1^{(i)}\}_{j=1}^N$ from \eqref{EnKF_new}, one may compute a new sample covariance $\sC_1$ and again update the current solution $\{\ub_1^{(i)}\}_{i=1}^N$  to obtain the new estimates $\{\ub_2^{(i)}\}_{i=1}^N$ via \eqref{EnKF_new}.   More generally, if the scheme \eqref{EnKF_new} is continued in an iterative fashion, we arrive at the Ensemble Kalman Inversion (EnKI) as follows:
\begin{equation}
\begin{aligned}
 \ub_{k+1}^{(i)} &= \ub_k^{(i)} +\sC(\ub_k)\A^T  \Blambda^{(i)}(\ub_k),\\
 &=\ub_k^{(i)} +\sC(\ub_k)\A^T  \LRp{\L+\A\sC(\ub_k)\A^T}^{-1}\LRp{{\db} -\A\ub_k^{(i)}},
\end{aligned}
 \label{iterative_version}
\end{equation}
where we assumed ${\db}=\A \ub^{\dagger}+\sigb$ ($\ub^{\dagger}$ is the true solution and same measurement noise  is used for each ensemble), and the matrix $\sC(\ub_k)$  is computed based on the current estimates $\{\ub_k^{(i)}\}_{i=1}^N$. Now if one introduces an artificial step size $h$ and defines a new noise covariance matrix  $\L_h=\frac{\L} {h}$, the scheme \eqref{iterative_version} using the new covariance $\L_h$ can be modified as:
\begin{equation}
\begin{aligned}
    \ub_{k+1}^{(i)} &= \ub_k^{(i)} +\sC(\ub_k)\A^T  \LRp{\L_h+\A\sC(\ub_k)\A^T}^{-1}\LRp{\db -\A\ub_k^{(i)}}\\
    &= \ub_k^{(i)} +h\sC(\ub_k)\A^T  \LRp{\L+h\A\sC(\ub_k)\A^T}^{-1}\LRp{\db -\A\ub_k^{(i)}}
\end{aligned}  
\label{new_ss}
\end{equation}
It is clear that if we take $h\rightarrow 0$, \eqref{new_ss} is the Euler  type discretization of the following set of coupled ODEs \cite{schillings2017analysis}:
\begin{equation}
    \frac{d \ub^{(i)}}{dt}=-\sC(\ub)\nabla_{\ub} \Phi\LRp{\ub^{(i)},\ \db},\quad i=1,\dots,  N,
    \label{ode_sc}
\end{equation}
where $\ub=[\ub^{(1)},\ \ub^{(2)}, \dots \ub^{(N)}]$ and
\begin{equation}
    \Phi\LRp{\ub^{(i)},\ \db}=\frac{1}{2}\nor{\L^{-1/2}\LRp{\db-\A \ub^{(i)}}}_2^2,\quad \sC(\ub)=\frac{1}{N}\sum_{j=1}^N\LRp{\ub^{(i)}-\bar{\ub}}\LRp{\ub^{(i)}-\bar{\ub}}^T.
    \label{Enk_loss}
\end{equation}
Thus each particle performs a preconditioned gradient descent for $\Phi\LRp{.,\ \db}$ and all the
individual gradient descents are coupled through the preconditioning of the flow by
the empirical covariance $\sC(\ub)$ \cite{schillings2017analysis}. 

Schillings et al. \cite{schillings2017analysis} analyzed the  long-time behavior  of \eqref{ode_sc} and proved the key properties  for the EnKI algorithm in continuous-time limit(considering linear setting and noise free scenario) such as: i) \eqref{ode_sc} has a unique  solution $\ub^{(i)}(.)\in C([0,\infty); \sD)$ where $\sD$ represents the finite dimensional subspace spanned by the initial ensembles, $\{\ub^{(1)}(0),\ \ub^{(2)}(0),\ \ub^{(3)}(0)\dots \ub^{(N)}(0)\}$;  ii) collapse of all ensemble members towards their mean value at an algebraic rate. Further, the solution $\ub^i(t)$ to \eqref{ode_sc} at any time always lies in $\sD$. This is referred to as the ``subspace property" of the EnKI algorithm. Since EnKI performs a preconditioned gradient descent for the loss function $\Phi\LRp{.,\ \db}$ in \eqref{Enk_loss} and the solution always lies in the subspace $\sD$, previous research \cite{iglesias2013ensemble} numerically showed that EnKI approximately solves the following least-squares minimization problem: 
\begin{equation}
    \underset{\ub \in \sD}{\min}\ \J_{EnKI}(\ub)=\min_{\ub\in \sD}\LRp{\frac{1}{2}\nor{\L^{-1/2}\LRp{\db-\A \ub}}_2^2}.
    \label{enki_loss_approx}
\end{equation}
We immediately see that the ``subspace property" of the EnKI algorithm plays the role of an implicit regularization in minimizing the above loss function, ensuring that the solution always lies in the subspace $\sD$.

The literature on the theoretical analysis of the continuous-time limit of the EnKI algorithm is extensive \cite{schillings2017analysis,blomker2022continuous,ding2021ensemble}, and numerous variants of the EnKI algorithm exist. We discuss here some of the recent developments in the field. Ding et al. \cite{ding2021ensemble}  analyzed the continuous time-limit version of
EnKI and  proved the mean field limit (as $N\rightarrow \infty$) of the resulting system of stochastic differential equation. To prevent ensemble collapse and break the subspace property, popular techniques commonly adopted in the Kalman filter community such as additive inflation, localization has been applied to EnKI \cite{chada2019convergence,tong2023localized}. However, one of the issues with breaking the ``subspace property" is that this destroys the implicit regularization imposed while minimizing \eqref{enki_loss_approx}. To tackle this issue, Chada et al. \cite{chada2019convergence,chada2020tikhonov} considered a Tikhonov regularization within the EnKI framework.

\begin{remark}
While the variants of EnKI discussed above offer various advantages, the vanilla EnKI \eqref{new_ss} remains widely used, particularly in cases where limited prior knowledge is available, such as when only a few prior measurements of the parameter field $\ub$ are accessible. In such situations, one can run the iterative scheme \eqref{new_ss} using these limited measurement samples as the initial ensembles. Recently, Harris et al. \cite{harris2025accuracy} proposed an accuracy improvement strategy for the vanilla EnKI \eqref{new_ss} through data-informed ensemble selection.

It is interesting to note that most theoretical work on EnKI relies on the continuous time-limit analysis of \eqref{new_ss} \cite{schillings2017analysis,ding2021ensemble}. In this study, we approach the EnKI algorithm from the perspective of fixed-point iteration.  
In particular, we propose a perturbed fixed-point iteration to accelerate the convergence of the EnKI algorithm \eqref{new_ss}. We will invoke  the duality perspective introduced in sections \ref{dual_1} and \ref{randomized_sec} to derive the perturbed fixed-point iteration.
\end{remark}
For deriving a convergence improvement strategy for EnKI, let us first rewrite \eqref{new_ss} as the following iterative scheme:
\begin{equation}
  \ub_{k+1}=g(\ub_k),\quad k=0,\dots \infty,
  \label{first_fix}
\end{equation}
where the function $g(\ub_k)$ is given as:
\begin{equation}
g(\ub_k) = \ub_k + 
\begin{bmatrix}
\sC(\ub_k)\A^T \\
\sC(\ub_k)\A^T \\
\vdots \\
\sC(\ub_k)\A^T 
\end{bmatrix}
\odot
\begin{bmatrix}
\Blambda^{(1)}(\bu_k) \\
\Blambda^{(2)}(\bu_k) \\
\vdots \\
\Blambda^{(N)}(\bu_k)
\end{bmatrix},
\label{fixed_point_equation}
\end{equation}
where $\odot$ represents the element-wise multiplication of the two tuples, 
 $\ub_k=[\ub^{(1)}_k,\ \ub^{(2)}_k, \dots \ub^{(N)}_k]$ and
    \begin{equation}
        \Blambda^{(i)}(\ub_k)=\LRp{\L_h+\A\sC(\ub_k)\A^T}^{-1}\LRp{\db -\A\ub^{(i)}_k},
        \label{iterative_lambda}
    \end{equation}
    \[\sC(\bu_k)=\frac{1}{N}\sum_{i=1}^N\LRp{\ub_k^{(i)}-\bar{\ub}_k}\LRp{\ub_k^{(i)}-\bar{\ub}_k}^T,\]
In the below discussions, we will assume that $\L_h=\mu I$, where $I$ is the identity matrix and $\mu>0$.

\subsection{EnKI-MC (I): A multiplicative covariance correction strategy for convergence acceleration of EnKI} 
\label{conv_imp_1}
In order to speed up the convergence of EnKI algorithm and improve the quality of inverse solution, we consider a multiplicative correction to the sample covariance matrix $\sC(\bu_k)$ as follows:
\begin{equation}
     \sC_m(\bu_k,\ k)=\alpha(\bu_k,\ k)\sC(\bu_k),
     \label{mul_cov}
\end{equation}
where  $\alpha(\bu_k,\ k)\in \R$ and $k\in \mathbb{N}$. Previously, a multiplicative covariance inflation in the context of the Ensemble Kalman Filter for dynamical systems was considered by Anderson et al. \cite{anderson2007adaptive}. In our case, this leads to the new iterative scheme: 
\begin{equation}
  \ub_{k+1}=g_m^{(I)}(\bu_k,\ \alpha(\bu_k,\ k)) = \begin{bmatrix}
\bu_k^{(1)} \\
\bu_k^{(2)} \\
\vdots \\
\bu_k^{(N)}
\end{bmatrix} + 
\begin{bmatrix}
\alpha(\bu_k,\ k)\sC(\ub_k)\A^T \\
\alpha(\bu_k,\ k)\sC(\ub_k)\A^T \\
\vdots \\
\alpha(\bu_k,\ k)\sC(\ub_k)\A^T 
\end{bmatrix}
\odot
\begin{bmatrix}
\Blambda^{(1)}(\bu_k,\ \alpha(\bu_k,\ k)) \\
\Blambda^{(2)}(\bu_k,\ \alpha(\bu_k,\ k)) \\
\vdots \\
\Blambda^{(N)}(\bu_k,\ \alpha(\bu_k,\ k))
\end{bmatrix}
\label{iter_new},\quad k=0,\dots \infty.
\end{equation}
where the function $\Blambda^{(i)}(\bu_k,\ \alpha(\bu_k,\ k))$ is obtained by replacing $\sC(\bu_k)$ in \eqref{iterative_lambda} with $\alpha(\bu_k,\ k)\sC(\bu_k)$. We call \eqref{iter_new} as EnKI-MC (I) (EnKI with multiplicative covariance correction). Looking carefully at \eqref{new_ss}, it is clear that using $\alpha(\bu_k,\ k)>1$ is equivalent to using a larger step-size of $h\alpha(\bu_k,\ k)$ in  discretizing \eqref{ode_sc}. This is expected to improve the speed of convergence provided that the iterative scheme \eqref{iter_new} is numerically stable.  

\begin{remark}
Such adaptive correction/step-size has been considered earlier by Chada et al. \cite{chada2019convergence} and  Iglesias et al. \cite{iglesias2021adaptive}. While Chada et al. \cite{chada2019convergence} considered a monotonically increasing stepsize  for convergence acceleration of EnKI, Iglesias et al. \cite{iglesias2021adaptive} derived adaptive stepsize for a different purpose-using EnKI within the Bayesian setting for parameter identification for approximating the target posterior distribution. The constraint that the step-size needs to be less than $1$ in Iglesias et al. \cite{iglesias2021adaptive}  leads to slow convergence of the algorithm.
In our numerical results, we provide comparison of our approach with the one by Chada et al. \cite{chada2019convergence} since our main objective is convergence acceleration of EnKI.
\end{remark}

In this work we will derive a new expression for the correction factor $\alpha(\bu_k,\ k)$ based on the duality perspective introduced in \eqref{randomized_sec}. Before we derive an expression for $\alpha(\bu_k,\ k)$, we will examine the convergence behavior of \eqref{iter_new} in Theorem \ref{mult_conv}. In particular, we look at the conditions on $\alpha(\bu_k,\ k)$  such that 
the perturbed iterative scheme \eqref{iter_new} converges.

\begin{theorem}[Convergence of EnKI-MC (I)]

  \label{mult_conv}
  
  Let $\sD \subset \R^n$ represents the finite dimensional subspace spanned by the initial ensembles, $\{\ub_0^{(1)},\ \ub_0^{(2)},\ \ub_0^{(3)}\dots \ub_0^{(N)}\}$. Assume the following:
  \begin{enumerate}
  \item  There exists a stepsize $h$ in \eqref{new_ss} such that the iterative scheme \eqref{first_fix} converges for any $\ub_0\in \sD$ and all the ensembles collapses. \footnote{Convergence of \eqref{new_ss} as $h\rightarrow 0$ under specific assumptions has been studied in Schillings et al. \cite{schillings2017analysis}}.
      \label{0}
   \item  $\alpha(\bu_k,\ k)$ in \eqref{iter_new} satisfies $\lim_{k\rightarrow \infty}\alpha(\bu_k,\ k)=1$ and $\alpha(\ub_k,\ k)\geq 1$ and bounded $\forall k$.
    \label{1}
   \item Assume that the sequence $\{ g_m^{(I)}(\bu_k,\ \alpha(\bu_k,\ k)), \}_{k=0}^{\infty}$ is bounded.
    \label{2}
      \item  $\exists k_0$, such that $\forall k\geq k_0$ 
\[ \nor{ g_m^{(I)}(\bu_k, 1) - g_m^{(I)}(\bv_k, 1) }_2  \leq L \nor{\bu_k-\bv_k}_2 ,\quad \mathrm{where},\quad L<1.\]
where $\bv_{k+1} = g_m^{(I)}(\bv_k, 1), \quad \text{for } k \geq k_0, \quad \bv_{k_0}=\bu_{k_0}$, and $\bu_{k_0}$ is the solution of \eqref{iter_new} at iteration $k_0$.
\label{4}
  \end{enumerate}
 Then for the iterative scheme \eqref{iter_new} the following holds true:
\begin{enumerate}
 \item  The solution $ \ub_{k}^{(i)}\in \sD$ for any $k$.
\item  The iterative scheme (\ref{iter_new}) converges to a fixed point of \eqref{first_fix}. That is, the converged solution $\ub^*$ satisfies $\ub^*=g(\ub^*)$.
      \item  For the converged solution $\ub^*$, all the ensembles   are the same (collapses), i.e $(\ub^*)^{(1)}=(\ub^*)^{(1)}=\dots (\ub^*)^{(N)}$.
\end{enumerate}

  \end{theorem}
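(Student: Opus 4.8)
The plan is to establish the three conclusions in sequence: the subspace property by induction, convergence by a perturbed-contraction argument comparing \eqref{iter_new} against the unperturbed scheme \eqref{first_fix}, and ensemble collapse as a corollary of Assumption \ref{0} applied to the comparison sequence. For the subspace property I would induct on $k$: the base case holds since $\ub_0^{(i)}\in\sD$ by definition, and for the step I would use that for any $\bw$,
\[
\sC(\ub_k)\bw=\frac{1}{N}\sum_{j=1}^N\LRp{\ub_k^{(j)}-\bar{\ub}_k}\LRs{\LRp{\ub_k^{(j)}-\bar{\ub}_k}^T\bw}
\]
is a linear combination of the centered ensemble members. Assuming $\ub_k^{(j)}\in\sD$ for all $j$ gives $\bar{\ub}_k\in\sD$ and hence each $\ub_k^{(j)}-\bar{\ub}_k\in\sD$, so the correction $\alpha_k\,\sC(\ub_k)\A^T\Blambda^{(i)}(\ub_k,\alpha_k)$ in \eqref{iter_new} stays in $\sD$ (the scalar $\alpha_k:=\alpha(\ub_k,k)$ being irrelevant), and adding $\ub_k^{(i)}\in\sD$ keeps $\ub_{k+1}^{(i)}\in\sD$.

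For convergence, I would let $\bv_k$ be the comparison sequence of Assumption \ref{4}, namely the unperturbed iterates $\bv_{k+1}=g_m^{(I)}(\bv_k,1)$ with $\bv_{k_0}=\ub_{k_0}$. Since $g=g_m^{(I)}(\cdot,1)$ is a contraction with constant $L<1$ for $k\geq k_0$, the $\bv_k$ form a Cauchy sequence converging to a fixed point $\ub^*$ of $g$. To transfer this to $\ub_k$, I would set $a_k:=\nor{\ub_k-\bv_k}_2$ and, inserting $\pm\,g_m^{(I)}(\ub_k,1)$, obtain
\[
a_{k+1}\leq\underbrace{\nor{g_m^{(I)}(\ub_k,\alpha_k)-g_m^{(I)}(\ub_k,1)}_2}_{=:\eta_k}+\nor{g_m^{(I)}(\ub_k,1)-g_m^{(I)}(\bv_k,1)}_2\leq\eta_k+L\,a_k,
\]
where the second bound is Assumption \ref{4}. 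Since $a_{k_0}=0$, this unrolls to $a_k\leq\sum_{j=k_0}^{k-1}L^{k-1-j}\eta_j$, which is the convolution of a summable geometric kernel with $\{\eta_j\}$; by the standard argument it tends to $0$ as soon as $\eta_k\to 0$. That would give $\ub_k\to\ub^*$ and, since $g$ is the map of \eqref{first_fix}, the fixed-point identity $\ub^*=g(\ub^*)$.

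The main obstacle I anticipate is showing $\eta_k\to 0$. Using the closed form \eqref{iterative_lambda}, the $i$-th block of $g_m^{(I)}(\ub_k,\alpha)-g_m^{(I)}(\ub_k,1)$ is
\[
\LRs{\alpha\,\sC(\ub_k)\A^T\LRp{\mu I+\alpha\,\A\sC(\ub_k)\A^T}^{-1}-\sC(\ub_k)\A^T\LRp{\mu I+\A\sC(\ub_k)\A^T}^{-1}}\LRp{\db-\A\ub_k^{(i)}},
\]
which is smooth in $\alpha$ near $\alpha=1$ because $\mu>0$ keeps $\mu I+\alpha\,\A\sC(\ub_k)\A^T$ invertible for every $\alpha\geq 0$. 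I would then invoke the boundedness of $\{\ub_k\}$ from Assumption \ref{2} --- which bounds $\{\sC(\ub_k)\}$ and $\{\db-\A\ub_k^{(i)}\}$ --- so that this expression is uniformly continuous in $\alpha$ over the relevant bounded set; together with $\alpha_k\to 1$ from Assumption \ref{1} this forces $\eta_k\to 0$. This uniform-smallness step, which couples the boundedness in Assumption \ref{2} with $\alpha_k\to 1$, is the technically delicate part; the remaining perturbed-contraction bookkeeping is routine.

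Finally, for the collapse I would observe that $\bv_k$ is itself a run of the unperturbed scheme \eqref{first_fix} started from $\bv_{k_0}=\ub_{k_0}\in\sD$ (using the already-proven subspace property). By Assumption \ref{0} every such run converges with all ensembles collapsing to their common mean, so the limit $\ub^*=\lim_k\bv_k$ satisfies $(\ub^*)^{(1)}=\dots=(\ub^*)^{(N)}$, giving the third conclusion.
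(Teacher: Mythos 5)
Your proposal is correct and follows essentially the same route as the paper's proof: induction for the subspace property, the comparison sequence $\bv_k$ with the recurrence $a_{k+1}\leq\eta_k+L\,a_k$, a resolvent-perturbation bound (exploiting $\mu>0$ and boundedness of the iterates) to force $\eta_k\to 0$, and collapse inherited from the unperturbed limit; your convolution bound $a_k\leq\sum_j L^{k-1-j}\eta_j$ is a slightly cleaner way to close the recurrence than the paper's $\epsilon_1/(1-L)$ estimate. The only small correction: convergence of $\bv_k$ should be sourced from Assumption 1 (it is a run of the unperturbed scheme started from $\bu_{k_0}\in\sD$), not from reading Assumption 4 as a global contraction property, since that assumption is stated only along the specific pair of sequences.
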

  \begin{proof}
First we will show that for EnKI-MC, the solution $ \ub_{k}^{(i)}\in \sD$ for any $k$. 
Consider the case $k=0$, then \eqref{iter_new} can be rewritten as:
\begin{equation}
\begin{aligned}
\begin{bmatrix}
\ub^{(1)}_{1} \\
\ub^{(2)}_{1} \\
\vdots \\
\ub^{(N)}_{1}
\end{bmatrix} =\ 
&\begin{bmatrix}
\ub^{(1)}_0 \\
\ub^{(2)}_0 \\
\vdots \\
\ub^{(N)}_0
\end{bmatrix}
+
\begin{bmatrix}
\frac{1}{N} \sum\limits_{i=1}^N \alpha(\bu_0,0) \langle  \ub^{(i)}_0-\bar{\ub}_0,\, \A^T \Blambda^{(1)}_m(\ub_0)  \rangle  \LRp{\ub^{(i)}_0-\bar{\ub}_0} \\
\frac{1}{N} \sum\limits_{i=1}^N \alpha(\bu_0,0) \langle  \ub^{(i)}_0-\bar{\ub}_0,\, \A^T \Blambda^{(2)}_m(\ub_0)  \rangle  \LRp{\ub^{(i)}_0-\bar{\ub}_0} \\
\vdots \\
\frac{1}{N} \sum\limits_{i=1}^N \alpha(\bu_0,0) \langle  \ub^{(i)}_0-\bar{\ub}_0,\, \A^T \Blambda^{(N)}_m(\ub_0)  \rangle  \LRp{\ub^{(i)}_0-\bar{\ub}_0}
\end{bmatrix},
\end{aligned}
\label{fixed_point_equation_new}
\end{equation}
where $\langle .,\ ,\rangle$ denotes the standard inner product in $\R^n$, $\Blambda^{(i)}_m(\ub_k)=\LRp{\L+\A\sC_m(\ub_k)\A^T}^{-1}\LRp{\db -\A\ub^{(i)}_k}$, and $\sC_m(\ub_k)$ is given in \eqref{mul_cov}. Now from  (\ref{fixed_point_equation_new}) it is clear that the components $\ub^{(1)}_1,\  \ub^{(1)}_2\dots  \ub^{(1)}_N \in \sD$ since each of can be written as  a linear combination of $\ub^{(1)}_0,\ \ub^{(2)}_0,\ \dots \ub^{(N)}_0$. Following similar reasoning for subsequent iteration it is clear that the solution $ \ub_{k}^{(i)}\in \sD$ for any $k$.

Now let us prove that the iterative scheme (\ref{iter_new}) converges. Let $K=\overline{\{\ub_k\}_{k=0}^\infty}$ denote the compact set, where $\overline{\LRp{.}}$ denotes the closure (set is also bounded by assumption \ref{2}). Since $g_m^{(I)}(.,.)$ is continuous with respect to both arguments we have:
\[\lim_{k\rightarrow \infty}g_m^{(I)}(\bu, \alpha(\bu_k,\ k))=g_m^{(I)}(\bu,\ 1),\quad \forall \bu \in K.  \]
where we used the assumption $\lim_{k\rightarrow \infty}\alpha(\bu_k,\ k)=1$. From the $\epsilon-\delta$ definition for the convergence of sequence, we have
for a given $\ub\in K$ and $\epsilon_1>0$, $\exists k_0(\ub)\in \mathbb{N}$ such that $\forall k \geq k_0(\bu)$,
\begin{equation}
 \nor{g_m^{(I)}(\bu, \alpha(\bu_k,\ k)) - g_m^{(I)}(\bu, 1)}_2<\epsilon_1.
\label{epsilon_delta}
\end{equation}
First we will show that the above convergence is uniform, i.e $\exists k_0(\ub)=k_0$ (independent of $\ub$) such that \eqref{epsilon_delta} holds $\forall \ub\in K$. For this it suffice to show the following:
\[ \lim_{k\rightarrow}\sup_{\bu \in K}\nor{g_m^{(I)}(\bu, \alpha(\bu_k,\ k)) - g_m^{(I)}(\bu, 1)}_2=0.\]
From \eqref{first_fix} and \eqref{iter_new} we have:
\begin{equation}
\begin{aligned}
\nor{g_m^{(I)}(\bu, \alpha(\bu_k,\ k)) - g_m^{(I)}(\bu, 1)}_2^2&=\nor{\begin{bmatrix}
\sC(\ub)\A^T \\
\sC(\ub)\A^T \\
\vdots \\
\sC(\ub)\A^T 
\end{bmatrix} \odot \begin{bmatrix}
\alpha(\bu_k,\ k)\Blambda^{(1)}(\bu,\ \alpha(\bu_k,\ k))-\Blambda^{(1)}(\bu,\ 1) \\
\alpha(\bu_k,\ k)\Blambda^{(2)}(\bu,\ \alpha(\bu_k,\ k))-\Blambda^{(2)}(\bu,\ 1) \\
\vdots \\
\alpha(\bu_k,\ k)\Blambda^{(N)}(\bu,\ \alpha(\bu_k,\ k))-\Blambda^{(N)}(\bu,\ 1)
\end{bmatrix}}_2^2    \\
&=\sum_{i=1}^N \nor{\sC(\ub)\A^T\LRp{\alpha(\bu_k,\ k)\Blambda^{(i)}(\bu,\ \alpha(\bu_k,\ k))-\Blambda^{(i)}(\bu,\ 1)}}_2^2\\
& \leq  \sum_{i=1}^N \norm{\sC(\ub)\A^T}_2^2\norm{\alpha(\bu_k,\ k)\Blambda^{(i)}(\bu,\ \alpha(\bu_k,\ k))-\Blambda^{(i)}(\bu,\ 1)}_2^2.
\end{aligned}    
\label{simplify}
\end{equation}
Using definition of $\Blambda^{(i)}$ in \eqref{iterative_lambda}, we have:
\begin{align}
\label{eq:step1}
& \norm{\alpha(\bu_k,\ k)\Blambda^{(i)}(\bu,\ \alpha(\bu_k,\ k))-\Blambda^{(i)}(\bu,\ 1)}_2^2\leq\\
\label{eq:step2}
& \norm{\LRp{\frac{\L_h}{\alpha(\bu_k,\ k)}+\A\sC(\ub)\A^T}^{-1}-\LRp{\L_h+\A\sC(\ub)\A^T}^{-1}}_2^2\norm{\underbrace{\LRp{{\db} -\A\ub^{(i)}}}_{{\bf{r}}^{(i)}(\ub)}}_2^2,\\
\label{eq:step3}
&\leq \LRp{\frac{\alpha(\bu_k,\ k)-1}{\alpha(\bu_k,\ k)}}^2\norm{\LRp{\frac{\L_h}{\alpha(\bu_k,\ k)}+\A\sC(\ub)\A^T}^{-1}}_2^2\norm{\L_h}_2^2\norm{\LRp{\L_h+\A\sC(\ub)\A^T}^{-1}}_2^2\norm{{\bf{r}}^{(i)}(\ub)}_2^2\\
& \leq \LRp{\frac{\alpha(\bu_k,\ k)-1}{\alpha(\bu_k,\ k)}}^2\frac{1}{\mu^2}\norm{{\bf{r}}^{(i)}(\ub)}_2^2
\end{align}    
where we used the matrix identity $X^{-1}-Y^{-1}=X^{-1}\LRp{Y-X}Y^{-1}$ in going from \eqref{eq:step2} to \eqref{eq:step3}. We also used $\L_h=\mu I$ and $\norm{\LRp{\L_h+\A\sC(\ub)\A^T}^{-1}}_2^2,\ \norm{\LRp{\frac{\L_h}{\alpha(\bu_k,\ k)}+\A\sC(\ub)\A^T}^{-1}}_2^2\leq \frac{1}{\mu^2}$. Using the above result in \eqref{simplify} we have:
\[\sup_{\ub \in K}\nor{g_m^{(I)}(\bu, \alpha(\bu_k,\ k)) - g_m^{(I)}(\bu, 1)}_2^2\leq   \sum_{i=1}^N \frac{1}{\mu^2}\LRp{\frac{\alpha(\bu_k,\ k)-1}{\alpha(\bu_k,\ k)}}^2\sup_{\ub\in K}\LRp{\norm{\sC(\ub)\A^T}_2^2\norm{{\bf{r}}^{(i)}(\ub)}_2^2},\]
where the supremum on the right hand side of the inequality is attained due to Weierstrass extreme value theorem. Now considering the limit $k\rightarrow \infty$
 for the right hand side of the inequality and using assumption \ref{1}, it is clear that:
 \[ \lim_{k\rightarrow}\sup_{\bu \in K}\nor{g_m^{(I)}(\bu, \alpha(\bu_k,\ k)) - g_m^{(I)}(\bu, 1)}_2=0.\]
Therefore, the convergence in \eqref{epsilon_delta} is uniform, i.e $\exists k_0(\ub)=k_0$ (independent of $\ub$) such that \eqref{epsilon_delta} holds $\forall \ub\in K$. 

Now consider the following iterative scheme with the starting point $\bv_{k_0}=\bu_{k_0}$:
\begin{equation}
\bv_{k+1} = g_m^{(I)}(\bv_k, 1), \quad \text{for } k \geq k_0, \quad \bv_{k_0}=\bu_{k_0},
\label{unperturbed_scheme}
\end{equation}
Since by assumption \ref{0}, \eqref{unperturbed_scheme} converges for the starting point $\bv_{k_0}=\bu_{k_0}$, $\forall \epsilon_2>0$,
 $\exists k_1 \geq k_0$ such that
 \begin{equation}
\nor{\bv_k - \bv_\infty }_2 < \epsilon_2 \quad \forall k \geq k_1,
\label{v_conv}
 \end{equation}
 where $\bv_\infty$ denotes the limit of the sequence. 
Now let us compute the error $e_k := \| \mathbf{u}_k - \mathbf{v}_k \|$ for $k \geq k_0$ as follows:
\begin{equation}
\begin{aligned}
e_{k+1} &=\nor{ \bu_{k+1} - \bv_{k+1} }_2 \\
&= \nor{ g_m^{(I)}(\bu_k, \alpha(\bu_k,\ k)) - g_m^{(I)}(\bv_k, 1) }_2 \\
&= \nor{ g_m^{(I)}(\bu_k, \alpha(\bu_k,\ k))- g_m^{(I)}(\bu_k, 1)+ g_m^{(I)}(\bu_k, 1) - g_m^{(I)}(\bv_k, 1) }_2 \\
&\leq \nor{ g_m^{(I)}(\bu_k, \alpha(\bu_k,\ k)) - g_m^{(I)}(\bu_k, 1) }_2+ \nor{ g_m^{(I)}(\bu_k, 1) - g_m^{(I)}(\bv_k, 1) }_2\\
&\leq  \epsilon_1 + L \cdot e_k,
\end{aligned}
\label{recurrence}
\end{equation}
where we used \eqref{epsilon_delta}, and used the assumption \ref{4}, i.e. $\exists k_0$, such that $\forall k\geq k_0$  we have:
\[\nor{ g_m^{(I)}(\bu_k, 1) - g_m^{(I)}(\bv_k, 1) }_2  \leq L \nor{\bu_k-\bv_k}_2 ,\quad \mathrm{where},\quad L<1.\]
Solving the above recurrence \eqref{recurrence} with $e_{k_0} = 0$, we get:
\begin{equation}
e_k \leq \epsilon_1 \sum_{j=0}^{k - k_0 - 1} L^j,\quad \forall k\geq k_0+1.
\label{recurrence_sol}
\end{equation}
 Therefore, from \eqref{recurrence_sol} and \eqref{v_conv} we have $\forall k\geq k_1+1$:
 \begin{equation}
     \begin{aligned}
\nor{ \bu_k - \bv_\infty }_2 &\leq \nor{ \bu_k - \bv_k }_2 + \nor{ \bv_k - \bv_\infty }_2 \leq \epsilon_1 \sum_{j=0}^{k - k_0 - 1} L^j+\epsilon_2\\
& \leq \epsilon_1\frac{1-L^{k-k_0}}{1-L}+\epsilon_2\leq  \frac{\epsilon_1}{1-L}+\epsilon_2 \leq \epsilon,
     \end{aligned}
     \label{conv_2}
 \end{equation}
where we defined $\epsilon=\frac{\epsilon_1}{1-L}+\epsilon_2 $. From \eqref{conv_2} it is clear that the sequence $\{\ub_k\}_{k=0}^\infty$ converges to $\bv_\infty$. In addition since $\bv_\infty^1=\bv_\infty^2=\dots \bv_\infty^N$ (ensembles collapses by 
assumption \ref{0}), we also have $\bu_\infty^1=\bu_\infty^2=\dots \bu_\infty^N$. That is, all the ensembles collapses for the perturbed scheme as well on convergence. 
Finally, note that \eqref{iter_new} can be rewritten as:
\begin{equation}
    \bu_{k+1}=g(\bu_k)+\LRp{ g_m^{(I)}(\bu_k,\ \alpha(\bu_k,\ k))-g(\bu_k)},\quad k=0,\dots \infty,
    \label{new_s}
\end{equation}
where $g(\bu_k)$ corresponds to the unperturbed scheme \eqref{first_fix}. Taking the limit $k\rightarrow \infty$ we have:
\[ \bu^*=g(\bu^*)+\LRp{ g_m^{(I)}(\bu^*,\ 1)-g(\bu^*)},\]
where we have used the fact that $\alpha(\bu_k,\ k)$, $g_m^{(I)}(\bu_k,\ \alpha(\bu_k,\ k))$, and $g(\bu_k)$ are  continuous functions and also used  $\lim_{k\rightarrow \infty}\alpha(\bu_k,\ k)=1$. Now noting that $g_m^{(I)}(\bu^*,\ 1)=g(\bu^*)$ we have:
\[ \bu^*=g(\bu^*).\]
That is,  $\bu^*$ is a fixed point of $g$ in \eqref{first_fix}. Therefore, the iterative scheme (\ref{iter_new}) converges to a fixed point of \eqref{first_fix}, and this concludes the proof.

  \end{proof}

\subsubsection{Designing the function $\alpha(\bu_k,\ k)$ in \eqref{mul_cov}}
\label{design}
In this section, we will formulate an optimization problem to construct an optimal multiplicative correction $\alpha(\ub_k,\ k)$ in \eqref{mul_cov} for EnKI-MC (I). To that end, recall the optimal sample averaged dual function in \eqref{optimal_dual}. In the context of EnKI algorithm \eqref{new_ss}, let us define the optimal sample averaged dual function at the $k^{th}$ iteration of \eqref{new_ss}  formed using $N$ estimates $\{\ub_{k}^{(i)}\}_{i=1}^N$ as follows:
\begin{equation}
\begin{aligned}
       \DexpectSA^k :=& -\frac{1}{2}(\overline{\Blambda}_k)^T \LRp{\L_h+\A  \sC_k\A^T}\overline{\Blambda}_k-(\overline{\Blambda}_k)^T\bar{{\bf{r}}}_k,
       \label{samp_d_e}
\end{aligned}
\end{equation}
where we introduced the following notations:
\begin{equation}
\begin{aligned}
   \sC_k:= \sC(\ub_k)=&\frac{1}{N}\sum_{i=1}^N\LRp{\ub_k^{(i)}-\bar{\ub}_k}\LRp{\ub_k^{(i)}-\bar{\ub}_k}^T,\quad  \overline{\ub}_k=\frac{1}{N}\sum_{i=1}^N\ub_k^{(i)},\\
    \bar{{\bf{r}}}_k=&\db -\A \overline{\ub}_k,\ \ \overline{\Blambda}_k:=\overline{\Blambda}(\ub_k)= \LRp{\L_h+\A \sC_k\A^T}^{-1}\LRp{\A \overline{\ub}_k-\db},
\end{aligned}    
\label{r_n_C}
\end{equation}
where we assumed $\db$ is free of noise (i.e we set $\overline{\sigb}=0$ in \eqref{optimal_dual}), replaced $\bu_0$ with the updated prior mean $\overline{\ub}_k$ and also replaced the samples $\del^{(i)}$ with $\bu_k^{(i)}-\overline{\ub}_k$ in \eqref{sample_average_dual} to obtain  \eqref{samp_d_e}. That is, \eqref{samp_d_e} is nothing but the sample averaged dual function when one uses  the updated prior mean $\overline{\ub}_k$, noise free data ${\bd}$ and covariance $\sC_k$. Here, the assumption is that  $\del^{(i)}$ are samples from some true underlying Gaussian  distribution at $k^{th}$ iteration.

In order to design the function $\alpha(\bu_k,\ k)$ in \eqref{iter_new}, let us consider the perturbed dual function by introducing  the multiplicative factor $\alpha_k:=\alpha(\bu_k,\ k)$ in \eqref{samp_d_e} as follows:
\begin{equation}
\DexpectSA^k(\alpha_k)=  -\frac{1}{2}(\overline{\Blambda}_k(\alpha_k))^T \LRp{\L_h+\alpha_k\A \sC_k\A^T}\overline{\Blambda}_k(\alpha_k)-(\overline{\Blambda}_k(\alpha_k))^T\bar{{\bf{r}}}_k,
\label{pert_d}
\end{equation}
where
\[ \overline{\Blambda}_k(\alpha_k)= \LRp{\L_h+\alpha_k\A \sC_k\A^T}^{-1}\LRp{\A \overline{\ub}_k-\db}.\]
Considering the assumptions and results in Theorem \ref{mult_conv}, let us investigate what happens to the perturbed  dual function  as $k\rightarrow \infty$:
\begin{equation}
         \lim_{k\rightarrow \infty}\DexpectSA^k(\alpha_k)=
        \frac{1}{2}\LRp{\A \overline{\ub}_{\infty}-\db}^T\L_h^{-1}\LRp{\A \overline{\ub}_{\infty}-\db},
      \label{inf_dual}
\end{equation}
where we used the assumption $\lim_{n\rightarrow \infty}\alpha(\bu_k,\ k)=1$, and the results in  Theorem \ref{mult_conv} that the EnKI iteration \eqref{iter_new} converges and all the samples collapses, i.e. one has $\lim_{k\rightarrow \infty}\sC_{k}={\bf{0}}$.

Now it is natural to consider  the following optimization problem for faster convergence:
\begin{equation}
    \min_{\alpha_k}\sS(\alpha_k) := \min_{\alpha_k}\LRs{\underbrace{\LRp{\DexpectSA^{k}(\alpha_k)-\lim_{k\rightarrow \infty}\DexpectSA^k(\alpha_k)}^2}_{(I)}+\underbrace{\delta(k) \times \LRp{\alpha_k-1}^2}_{(II)}}.
    \label{alpha_opt}
\end{equation}
where the regularization parameter $\delta(k)$ is chosen to satisfy $\lim_{k\rightarrow \infty} \delta(k)=\infty $. Note that the regularization term $\delta(k)\times  \LRp{\alpha_k-1}^2$ keeps $\alpha_k$ bounded and also promotes $\lim_{k\rightarrow \infty}\alpha_k=1$ (assuming that the first term in \eqref{alpha_opt} remains bounded for any $k$) which is a key assumption in Theorem \ref{mult_conv} for the convergence of \eqref{new_ss}.

Now let us consider the ideal scenario where the data $\db$ is free of noise and the true solution $\ub^\dagger$ (i.e  $\ub^\dagger$ satisfies $\A\ub^\dagger=\db$) lies in $\sD$ where $\sD$ is the finite dimensional subspace spanned by the initial ensembles. 

\textit{Our objective is to find the sequence $\{\alpha_k\}_{k=1}^\infty$ that promotes the convergence of scheme \eqref{iter_new} to the true solution $\ub^\dagger$, i.e.  $\overline{\ub}_{\infty}=\ub^\dagger$ and minimizes \eqref{alpha_opt} (for faster convergence).} 

Note that in this case from \eqref{inf_dual} $  \lim_{k\rightarrow \infty}\DexpectSA^k(\alpha_k)=0$. Even when the data $\db$ is contaminated with small measurement noise, and $\ub^\dagger \notin \sD$, we assume that the subspace $\sD$ is rich enough to contain a solution $\overline{\ub}_{\infty} \in \sD$ close enough to $\ub^{\dagger}$ such that  $  \lim_{k\rightarrow \infty}\DexpectSA^k(\alpha_k)$ in \eqref{inf_dual} is small and hence neglected.

In Theorem \ref{opt_inf_the} we will derive the optimal solution to 
\eqref{alpha_opt} when $  \lim_{k\rightarrow \infty}\DexpectSA^k(\alpha_k)=0$.

\begin{theorem}[Optimal covariance correction factor $\alpha(\bu)$ in EnKI-MC]
\label{opt_inf_the}
    Assume that $  \lim_{k\rightarrow \infty}\DexpectSA^k(\alpha_k)=0$ in \eqref{alpha_opt}.
    Then, the minimizer $\alpha_k$ of \eqref{alpha_opt} satisfies the following fixed point equation:
    \begin{equation}
    \alpha_k=1+\frac{\LRp{\bar{{\bf{r}}}_k^T(\M(\alpha_k))^{-1}\bar{{\bf{r}}}_k}\LRp{r_n^T(\M(\alpha_k))^{-1}\A \sC_k\A^T(\M(\alpha_k))^{-1}\bar{{\bf{r}}}_k}}{4\delta(k)}=\zeta_{\delta}(\alpha_k),
    \label{alpha_method_II}
\end{equation}
where $\M(\alpha)=\L_h+\alpha\A \sC_k\A^T$ with $\L_h=\mu I$, $\bar{{\bf{r}}}_k,\ \sC_k$ are given in \eqref{r_n_C}.
\end{theorem}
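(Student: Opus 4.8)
The plan is to directly minimize $\sS(\alpha_k)$ in \eqref{alpha_opt} by computing its derivative with respect to $\alpha_k$ and setting it to zero. Under the assumption $\lim_{k\rightarrow \infty}\DexpectSA^k(\alpha_k)=0$, the objective simplifies to
\[
\sS(\alpha_k) = \LRp{\DexpectSA^k(\alpha_k)}^2 + \delta(k)\LRp{\alpha_k-1}^2,
\]
so that the stationarity condition reads $2\,\DexpectSA^k(\alpha_k)\,\frac{d}{d\alpha_k}\DexpectSA^k(\alpha_k) + 2\delta(k)\LRp{\alpha_k-1}=0$. Rearranging immediately gives a fixed-point form $\alpha_k = 1 - \frac{1}{\delta(k)}\DexpectSA^k(\alpha_k)\,\frac{d}{d\alpha_k}\DexpectSA^k(\alpha_k)$, which already matches the structure of \eqref{alpha_method_II}. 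The real work is then to evaluate the two factors $\DexpectSA^k(\alpha_k)$ and its derivative explicitly in terms of $\M(\alpha_k)=\L_h+\alpha_k\A\sC_k\A^T$ and $\bar{{\bf{r}}}_k$.

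First I would simplify $\DexpectSA^k(\alpha_k)$ in \eqref{pert_d}. Writing $\overline{\Blambda}_k(\alpha_k)=-\M(\alpha_k)^{-1}\bar{{\bf{r}}}_k$ (since $\A\overline{\ub}_k-\db=-\bar{{\bf{r}}}_k$), substitute into \eqref{pert_d}: the quadratic term becomes $-\tfrac12\bar{{\bf{r}}}_k^T\M(\alpha_k)^{-1}\M(\alpha_k)\M(\alpha_k)^{-1}\bar{{\bf{r}}}_k = -\tfrac12\bar{{\bf{r}}}_k^T\M(\alpha_k)^{-1}\bar{{\bf{r}}}_k$, and the linear term $-(\overline{\Blambda}_k)^T\bar{{\bf{r}}}_k = \bar{{\bf{r}}}_k^T\M(\alpha_k)^{-1}\bar{{\bf{r}}}_k$. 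These combine to give the clean expression $\DexpectSA^k(\alpha_k)=\tfrac12\,\bar{{\bf{r}}}_k^T\M(\alpha_k)^{-1}\bar{{\bf{r}}}_k$, which explains the appearance of the first bracketed factor in \eqref{alpha_method_II}.

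Next I would differentiate. Using the standard matrix identity $\frac{d}{d\alpha_k}\M(\alpha_k)^{-1} = -\M(\alpha_k)^{-1}\LRp{\frac{d}{d\alpha_k}\M(\alpha_k)}\M(\alpha_k)^{-1}$ together with $\frac{d}{d\alpha_k}\M(\alpha_k)=\A\sC_k\A^T$, I obtain
\[
\frac{d}{d\alpha_k}\DexpectSA^k(\alpha_k) = -\tfrac12\,\bar{{\bf{r}}}_k^T\M(\alpha_k)^{-1}\A\sC_k\A^T\M(\alpha_k)^{-1}\bar{{\bf{r}}}_k.
\]
Substituting the evaluated factor and this derivative into the rearranged stationarity condition yields
\[
\alpha_k = 1 + \frac{\LRp{\bar{{\bf{r}}}_k^T\M(\alpha_k)^{-1}\bar{{\bf{r}}}_k}\LRp{\bar{{\bf{r}}}_k^T\M(\alpha_k)^{-1}\A\sC_k\A^T\M(\alpha_k)^{-1}\bar{{\bf{r}}}_k}}{4\delta(k)},
\]
which is exactly \eqref{alpha_method_II} (the symmetry of $\M(\alpha_k)^{-1}$ lets one write the inner factor with $\bar{{\bf{r}}}_k$ on either side, matching the $r_n$ appearing there).

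The main obstacle is not the calculus but the interpretation: \eqref{alpha_method_II} is an \emph{implicit} equation, since $\M(\alpha_k)$ depends on the unknown $\alpha_k$, so the statement only claims that the minimizer \emph{satisfies} a fixed-point relation rather than giving it in closed form. I would be careful to note that setting the derivative to zero identifies stationary points, and that any minimizer of $\sS$ must be among them; a brief remark that $\sS$ is coercive in $\alpha_k$ (the penalty term $\delta(k)(\alpha_k-1)^2$ dominates as $|\alpha_k|\to\infty$) guarantees a minimizer exists, so the fixed-point characterization is nonvacuous. I would not attempt to prove uniqueness of the fixed point here, since the theorem only asserts that the minimizer obeys the relation.
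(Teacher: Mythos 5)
Your proposal is correct and follows essentially the same route as the paper's proof: simplify the perturbed dual function using $\M(\alpha_k)\overline{\Blambda}_k(\alpha_k)=-\bar{{\bf{r}}}_k$, differentiate via the matrix-inverse derivative identity, set the first-order condition to zero, and rearrange into the fixed-point form; the only cosmetic difference is that you substitute $\overline{\Blambda}_k(\alpha_k)=-\M(\alpha_k)^{-1}\bar{{\bf{r}}}_k$ up front, while the paper carries $\overline{\Blambda}_k$ symbolically until the last step. Your added coercivity remark (existence of a minimizer) is a small bonus the paper omits, but it does not change the argument.
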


\begin{proof}
   The cost function \eqref{alpha_opt}  simplifies as:
    \begin{equation}
     \sS(\alpha_k) =\LRp{\frac{1}{2}(\overline{\Blambda}_k(\alpha_k))^T \LRp{\L_h+\alpha_k\A \sC_k\A^T}\overline{\Blambda}_k(\alpha_k)+\overline{\Blambda}_k^T\bar{{\bf{r}}}_k}^2+\delta(k)  \LRp{\alpha_k-1}^2.
     \label{loss_mo_o}
\end{equation}
Using the fact that $\LRp{\L_h+\alpha_k\A \sC_k\A^T} \overline{\Blambda}_k(\alpha_k)=-\bar{{\bf{r}}}_k$, \eqref{loss_mo_o} can be simplified as::
  \begin{equation}
     \sS(\alpha_k)  = \LRp{\frac{1}{2}(\overline{\Blambda}_k(\alpha_k))^T\bar{{\bf{r}}}_k}^2+\delta(k)  \LRp{\alpha_k-1}^2.
      \label{loss_mo_oo}
\end{equation}
From the first order optimality condition for minimizing $\sS(\alpha_k)$ we get:
\begin{equation}
    \frac{1}{2} \LRp{(\overline{\Blambda}_k(\alpha_k))^T\bar{{\bf{r}}}_k}\LRp{\frac{d (\overline{\Blambda}_k(\alpha_k))^T}{d\alpha_k}\bar{{\bf{r}}}_k}+2\delta(k) (\alpha_k-1)=0.
    \label{opt_cond}
\end{equation}
Note that since $\LRp{\L_h+\alpha_k\A \sC_k\A^T} \overline{\Blambda}_k(\alpha_k)=-\bar{{\bf{r}}}_k$, we have:
\begin{equation}
   \frac{d (\overline{\Blambda}_k(\alpha_k))^T}{d\alpha_k}=\bar{{\bf{r}}}_k^T\LRp{\L_h+\alpha_k\A \sC_k\A^T}^{-1}\A \sC_k\A^T\LRp{\L_h+\alpha_k\A \sC_k\A^T}^{-1}.  
\end{equation}
Therefore \eqref{opt_cond} becomes
\[\LRp{(\overline{\Blambda}_k(\alpha_k))^T\bar{{\bf{r}}}_k} \LRp{\bar{{\bf{r}}}_k^T\LRp{\L_h+\alpha_k\A \sC_k\A^T}^{-1}\A \sC_k\A^T\LRp{\L_h+\alpha_k\A \sC_k\A^T}^{-1}\bar{{\bf{r}}}_k}+4\delta(k) (\alpha_k-1)=0. \]
Simplifying the above expression, the optimal correction factor $\alpha_k$  satisfies the following fixed-point equation:
\begin{equation}
    \alpha_k=1+\frac{\LRp{\bar{{\bf{r}}}_k^T(\M(\alpha_k))^{-1}\bar{{\bf{r}}}_k}\LRp{\bar{{\bf{r}}}_k^T(\M(\alpha_k))^{-1}\A \sC_k\A^T(\M(\alpha_k))^{-1}\bar{{\bf{r}}}_k}}{4\delta(k)}=\zeta_\delta(\alpha_k),
\end{equation}
where $\M(\alpha_k)=\L_h+\alpha_k\A \sC_k\A^T$.
\end{proof}

\begin{remark}
   To find the optimal $\alpha_k$ in \eqref{alpha_method_II} one may consider the following fixed-point iteration:
\begin{equation}
   \alpha_k^{p+1}=\zeta_\delta\LRp{\alpha_k^{p}},\quad p=0,\dots\ \ \mathrm{with}\ \ \alpha_k^0=\alpha_{k-1}^{\infty},
    \label{iterative_conver}
\end{equation}   
    where $\alpha_k^{p+1}$ denotes the value of $\alpha_k$ at the $(p+1)^{th}$ iteration of \eqref{iterative_conver}, $\alpha_{k-1}^{\infty}$ denotes the converged value of $\alpha_{k-1}$ for the previous EnKI iteration. One of the key questions that remains is if \eqref{alpha_method_II} has a solution and if the scheme \eqref{iterative_conver} converges?
    Proposition \ref{prop_conv_ban} gives a condition on the regularization factor $\delta(k)$ such that the fixed point equation \eqref{alpha_method_II} has a unique solution.
\end{remark}
\begin{proposition}[Existence of unique solution for \eqref{alpha_method_II}]
\label{prop_conv_ban}
    Consider the fixed point equation \eqref{alpha_method_II} where the function $\zeta_\delta(.):[1,\ \infty) \mapsto [1,\ \infty)$. The equation  \eqref{alpha_method_II}  has a unique solution if the regularization $\delta(k)$ has the form:
    \begin{equation}
        \delta(k)=\frac{3}{4q}\LRp{\lambda_{max}^2\LRp{\A \sC_k\A^T}\frac{\nor{\bar{{\bf{r}}}_k}_2^4}{(\mu+ \lambda_{min}\LRp{\A\sC_k\A^T})^4}}+\epsilon_{\delta}\times k.
        \label{delta_val}
    \end{equation}
    where $\bar{{\bf{r}}}_k,\ \sC_k$ are given in \eqref{r_n_C}, $\mu$ is defined in \eqref{opt_inf_the}, $q$ is a positive number less than $1$, and $\epsilon_\delta\in \R^+$. Moreover, the iterative scheme \eqref{iterative_conver} converges to the unique solution.
\end{proposition}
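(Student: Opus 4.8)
The plan is to recognize \eqref{alpha_method_II} as a fixed-point problem for the map $\zeta_\delta:[1,\infty)\to[1,\infty)$ and to prove, via the Banach fixed-point theorem on the complete metric space $[1,\infty)$ (a closed subset of $\R$), that the choice \eqref{delta_val} makes $\zeta_\delta$ a contraction with modulus $q<1$. Uniqueness of the solution of \eqref{alpha_method_II} and convergence of the iteration \eqref{iterative_conver} from any starting point, in particular $\alpha_k^0=\alpha_{k-1}^{\infty}$, then follow at once. Writing $\zeta_\delta(\alpha)=1+\frac{f(\alpha)}{4\delta(k)}$ with
\[
f(\alpha):=\LRp{\bar{{\bf{r}}}_k^T\M(\alpha)^{-1}\bar{{\bf{r}}}_k}\LRp{\bar{{\bf{r}}}_k^T\M(\alpha)^{-1}S\,\M(\alpha)^{-1}\bar{{\bf{r}}}_k},\qquad S:=\A\sC_k\A^T,
\]
I would first confirm the self-mapping property: since $\sC_k\succeq 0$ gives $S\succeq 0$, both $\M(\alpha)^{-1}$ and $\M(\alpha)^{-1}S\M(\alpha)^{-1}$ are positive semidefinite, so $f(\alpha)\ge 0$ and hence $\zeta_\delta(\alpha)\ge 1$ for every $\alpha\ge 1$.

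The contraction property I would establish by bounding $\snor{\zeta_\delta'(\alpha)}=\snor{f'(\alpha)}/(4\delta(k))$ uniformly over $\alpha\ge 1$. Using $\frac{d}{d\alpha}\M(\alpha)^{-1}=-\M(\alpha)^{-1}S\,\M(\alpha)^{-1}$, the two scalar factors $g_1:=\bar{{\bf{r}}}_k^T\M^{-1}\bar{{\bf{r}}}_k$ and $g_2:=\bar{{\bf{r}}}_k^T\M^{-1}S\M^{-1}\bar{{\bf{r}}}_k$ obey $g_1'=-g_2$ and $g_2'=-2\,\bar{{\bf{r}}}_k^T\M^{-1}S\M^{-1}S\M^{-1}\bar{{\bf{r}}}_k$, so that $f'=g_1'g_2+g_1g_2'=-g_2^2-2g_1\,\bar{{\bf{r}}}_k^T\M^{-1}S\M^{-1}S\M^{-1}\bar{{\bf{r}}}_k$. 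The crucial uniform resolvent bound is $\norm{\M(\alpha)^{-1}}_2\le(\mu+\lambda_{min}(S))^{-1}$, valid for all $\alpha\ge 1$ because $\M(\alpha)-\M(1)=(\alpha-1)S\succeq 0$. Combining this with $\norm{S}_2=\lambda_{max}(S)$ yields $g_1\le\frac{\norm{\bar{{\bf{r}}}_k}_2^2}{\mu+\lambda_{min}(S)}$, $g_2\le\frac{\lambda_{max}(S)\norm{\bar{{\bf{r}}}_k}_2^2}{(\mu+\lambda_{min}(S))^2}$, and $\bar{{\bf{r}}}_k^T\M^{-1}S\M^{-1}S\M^{-1}\bar{{\bf{r}}}_k\le\frac{\lambda_{max}^2(S)\norm{\bar{{\bf{r}}}_k}_2^2}{(\mu+\lambda_{min}(S))^3}$, whence
\[
\snor{f'(\alpha)}\le g_2^2+2g_1\,\frac{\lambda_{max}^2(S)\norm{\bar{{\bf{r}}}_k}_2^2}{(\mu+\lambda_{min}(S))^3}\le\frac{3\,\lambda_{max}^2(S)\,\norm{\bar{{\bf{r}}}_k}_2^4}{(\mu+\lambda_{min}(S))^4}.
\]
The factor $3=1+2$ produced by the two terms is precisely what dictates the constant $\tfrac{3}{4q}$ appearing in \eqref{delta_val}.

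Finally, inserting the dominant (first) term of \eqref{delta_val} into this estimate gives $\snor{\zeta_\delta'(\alpha)}\le q<1$ uniformly in $\alpha\ge 1$; the extra term $\epsilon_\delta\times k$ only enlarges $\delta(k)$ (preserving the contraction) while simultaneously enforcing $\delta(k)\to\infty$, consistent with the requirement imposed earlier for Theorem \ref{mult_conv}. By the mean value theorem $\zeta_\delta$ is then Lipschitz with constant $q<1$, i.e. a contraction self-map of $[1,\infty)$, and the Banach fixed-point theorem delivers both the unique fixed point of \eqref{alpha_method_II} and the convergence of \eqref{iterative_conver} to it. I expect the main obstacle to be the bookkeeping of the preceding paragraph: differentiating the matrix inverse correctly, tracking the three nested-resolvent quadratic forms, and—most delicately—verifying that $\norm{\M(\alpha)^{-1}}_2$ admits the single bound $(\mu+\lambda_{min}(S))^{-1}$ \emph{uniformly} over the unbounded domain $\alpha\ge 1$, since it is this uniformity that lets one constant $q$ control the Lipschitz modulus everywhere.
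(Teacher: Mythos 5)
Your proposal is correct and follows essentially the same route as the paper's proof: verify the self-mapping property via positive semidefiniteness, differentiate the two nested quadratic forms to get $f' = -g_2^2 - 2g_1 g_3$, bound each factor uniformly using $\lambda_{\min}(\M(\alpha)) \ge \mu + \lambda_{\min}(\A\sC_k\A^T)$ for $\alpha \ge 1$, and conclude contraction with modulus $q$ from the choice of $\delta(k)$, invoking the Banach fixed-point theorem. The constants, the factor of $3$, and the uniform resolvent bound all match the paper's argument.
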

\begin{proof}
We first note the range of $\zeta_\delta(\alpha)$ in \eqref{alpha_method_II} is $[1,\infty)$ which is due to the fact that $\M(\alpha)$, and $(\M(\alpha))^{-1}\A \sC_k\A^T(\M(\alpha))^{-1}$ are positive semi-definite matrices. 
We will now determine the conditions on $\delta(k)$  such that the mapping $\zeta_\delta(\alpha):[1,\ \infty) \mapsto [1,\ \infty)$ is a contraction \cite{conrad2014contraction}. This ensures that \eqref{alpha_method_II} has a unique solution and  the iterative scheme \eqref{iterative_conver} converges to the unique solution \cite{conrad2014contraction}. Since $\zeta_\delta(\alpha)$ is differentiable on $[1,\ \infty)$,  in order to show that $\zeta_\delta(\alpha)$ is a contraction,  it is sufficient to show the following:
\begin{equation}
     \snor{\zeta_\delta'(\alpha)}\leq q<1,\quad \forall \alpha \in [1,\ \infty).
     \label{contract}
\end{equation}
    We have:
    \begin{equation}
            \begin{aligned}
        \zeta_\delta'(\alpha)=&\frac{1}{4\delta(k)}\LRp{\frac{d\LRp{\bar{{\bf{r}}}_k^T(\M(\alpha))^{-1}\bar{{\bf{r}}}_k}}{d\alpha}\LRp{\bar{{\bf{r}}}_k^T(\M(\alpha))^{-1}\A \sC_k\A^T(\M(\alpha))^{-1}\bar{{\bf{r}}}_k}}\\
        &+\frac{1}{4\delta(k)}\LRp{{\LRp{\bar{{\bf{r}}}_k^T(\M(\alpha))^{-1}\bar{{\bf{r}}}_k}}\frac{d\LRp{ \bar{{\bf{r}}}_k^T(\M(\alpha))^{-1}\A \sC_k\A^T(\M(\alpha))^{-1}\bar{{\bf{r}}}_k}}{d\alpha}}.
    \end{aligned}
    \end{equation}
    Define $f_1(\alpha)=\bar{{\bf{r}}}_k^T(\M(\alpha))^{-1}\bar{{\bf{r}}}_k$, $f_2(\alpha)= \bar{{\bf{r}}}_k^T(\M(\alpha))^{-1}\A \sC_k\A^T(\M(\alpha))^{-1}\bar{{\bf{r}}}_k$. Then,
\[\frac{d f_1(\alpha)}{d\alpha}= -\LRp{ \bar{{\bf{r}}}_k^T(\M(\alpha))^{-1}\A \sC_k\A^T(\M(\alpha))^{-1}\bar{{\bf{r}}}_k}=-f_2(\alpha), \]
\[\frac{d f_2(\alpha)}{d\alpha}= -2\LRp{\bar{{\bf{r}}}_k^T(\M(\alpha))^{-1}\A \sC_k\A^T(\M(\alpha))^{-1}\A \sC_k\A^T(\M(\alpha))^{-1}\bar{{\bf{r}}}_k}=-2f_3(\alpha). \]
Therefore, we have:
\begin{equation}
    \snor{\zeta_\delta'(\alpha)}= \frac{1}{4\delta(k)}\snor{\LRp{(f_2(\alpha))^2+2f_1(\alpha)f_3(\alpha)}}=\frac{1}{4\delta(k)}\LRp{(f_2(\alpha))^2+2f_1(\alpha)f_3(\alpha)},
    \label{deri_bound}
\end{equation}
since $f_1(\alpha),\ f_2(\alpha),\ f_3(\alpha)\geq 0$. 
Now we note that $f_1(\alpha)$ satisfies the following bound:
\begin{equation}
\begin{aligned}
    f_1(\alpha)=\snor{f_1(\alpha)}=\snor{\bar{{\bf{r}}}_k^T(\M(\alpha))^{-1}\bar{{\bf{r}}}_k}\leq \nor{\bar{{\bf{r}}}_k}_2^2\nor{(\M(\alpha))^{-1}}_2\leq \frac{\nor{\bar{{\bf{r}}}_k}_2^2}{\lambda_{min}\LRp{(\M(\alpha))}} ,
\end{aligned}
 \label{bound_1}
\end{equation}
where we used the Cauchy-Swartz inequality and used the fact that $l_2$ norm is consistent. Similarly, one can show that $f_2(\alpha),\ f_3(\alpha)$ satisfies the following bounds:
\begin{equation*}
    \begin{aligned}
         f_2(\alpha)&\leq \lambda_{max}\LRp{\A \sC_k\A^T}\frac{\nor{\bar{{\bf{r}}}_k}_2^2}{\lambda_{min}^2\LRp{(\M(\alpha))}},\\
         f_3(\alpha)&\leq  \lambda_{max}^2\LRp{\A \sC_k\A^T}\frac{\nor{\bar{{\bf{r}}}_k}_2^2}{\lambda_{min}^3\LRp{(\M(\alpha))}},
    \end{aligned}
    \label{bound_2}
\end{equation*}
where we used the submultiplicativity of spectral norm to derive the bounds.
Substituting \eqref{bound_1} and \eqref{bound_2}  in \eqref{deri_bound} we have:
\begin{equation*}
\begin{aligned}
\snor{\zeta_\delta'(\alpha)}&\leq \frac{1}{4\delta(k)} \LRp{\lambda_{max}^2\LRp{\A \sC_k\A^T}\frac{\nor{\bar{{\bf{r}}}_k}_2^4}{\lambda_{min}^4\LRp{(\M(\alpha))}}+2 \frac{\nor{\bar{{\bf{r}}}_k}_2^4}{\lambda_{min}^4\LRp{(\M(\alpha))}} \ \lambda_{max}^2\LRp{\A \sC_k\A^T} } \\
&\leq \frac{3}{4\delta(k)}\LRp{\lambda_{max}^2\LRp{\A \sC_k\A^T}\frac{\nor{\bar{{\bf{r}}}_k}_2^4}{\lambda_{min}^4\LRp{(\M(\alpha))}}}\\
& \leq \frac{3}{4\delta(k)}\LRp{\lambda_{max}^2\LRp{\A \sC_k\A^T}\frac{\nor{\bar{{\bf{r}}}_k}_2^4}{(\mu+ \lambda_{min}\LRp{\A\sC_k\A^T})^4}},\quad \forall \alpha \in [1,\infty).
\end{aligned}    
\end{equation*}
where we used:
\[\lambda_{min}\LRp{(\M(\alpha))}\geq \mu+ \lambda_{min}\LRp{\A\sC_k\A^T},\quad \forall \alpha \in [1,\infty). \]
Now for \eqref{contract} to hold we need:
\[
\frac{3}{4\delta(k)}\LRp{\lambda_{max}^2\LRp{\A \sC_k\A^T}\frac{\nor{\bar{{\bf{r}}}_k}_2^4}{(\mu+ \lambda_{min}\LRp{\A\sC_k\A^T})^4}}\leq q<1.
\]
\[ \implies \delta \geq\frac{3}{4q}\LRp{\lambda_{max}^2\LRp{\A \sC_k\A^T}\frac{\nor{\bar{{\bf{r}}}_k}_2^4}{(\mu+ \lambda_{min}\LRp{\A\sC_k\A^T})^4}}.\]
Finally choose $\delta(k)$ as:
\begin{equation}
    \delta(k)=\underbrace{\frac{3}{4q}\LRp{\lambda_{max}^2\LRp{\A \sC_k\A^T}\frac{\nor{\bar{{\bf{r}}}_k}_2^4}{(\mu+ \lambda_{min}\LRp{\A\sC_k\A^T})^4}}}_{\delta_1(k)}+\underbrace{\epsilon_{\delta}\times k}_{\delta_2(k)},
    \label{two_parts}
\end{equation}
where $\epsilon_{\delta}\in \R^+$. This concludes the proof.
\end{proof}

\begin{remark}
\label{stability_remark}
Note that the regularization $\delta(k)$ in \eqref{two_parts} has two components: $\delta_1(k)$ and $\delta_2(k)$. When $\delta_1(k)$ or $\delta_2(k)$ is very large, this forces the solution $\alpha_k$ to be very close to $1$ (see the optimization problem \eqref{alpha_opt}). 
A very large value of $\delta_1(k)$ is related to a large condition number of  $\LRp{\L_h+\A\sC_k\A^T}$ in \eqref{new_ss} or a high value of $\nor{\bar{{\bf{r}}}_k}_2$ as can be seen below: 
\begin{equation}
    \delta_1(k)= \frac{3}{4q}\LRp{\frac{ \lambda_{max}\LRp{\A\sC_k\A^T})}{\mu+ \lambda_{min}\LRp{\A\sC_k\A^T})}}^2\LRp{\frac{\nor{\bar{{\bf{r}}}_k}_2^2}{(\mu+ \lambda_{min}\LRp{\A\sC_k\A^T})^2}}\leq \frac{3}{4q} \kappa^2\LRp{\L_h+\A\sC_k\A^T}\LRp{\frac{\nor{\bar{{\bf{r}}}_k}_2}{\mu}}^2,
    \label{delta_1}
\end{equation}
where $\kappa\LRp{\L_h+\A\sC_k\A^T}$ denotes the  condition number of  $\LRp{\L_h+\A\sC_k\A^T}$ and we assumed $\lambda_{min}\LRp{\A\sC_k\A^T})\approx 0$ due to the rank deficiency of the sample covariance matrix $\sC_k$. Now note that the Kalman gain of the perturbed scheme \eqref{iter_new} can be written as:    
\[ \alpha_k\sC_k\A^T\LRp{\L_h+\alpha_k\A\sC_k\A^T}^{-1}=\sC_k\A^T\LRp{\frac{\L_h}{\alpha_k}+\A\sC_k\A^T}^{-1}.\]
It is clear that in EnKI, the term $\frac{\L_h}{\alpha_k}$ acts as the regularization term to stabilize the inversion of ill-conditioned $\A\sC_k\A^T$ \cite{iglesias2021adaptive}. When $\alpha_k>>1$,
$\LRp{\frac{\L_h}{\alpha_k}+\A\sC_k\A^T}$ is  even more ill-conditioned than $\LRp{{\L_h}+\A\sC_k\A^T}$.
Thus, a large value of $\delta_1(k)$ forces $\alpha_k$ to be close to $1$ which arises from a numerical stability of inversion point of view. 

The term $\delta_2(k)$ forces convergence of $\alpha_k$ to 1 as $k\rightarrow \infty$.
which is a key assumption in Theorem \ref{mult_conv}. 
\end{remark}
\begin{remark}[Nature of the resulting curve of $\alpha_k$ vs. $k$]
\label{curve_nature}
   A consequence of remark \ref{stability_remark} is the increasing-decreasing nature of $\alpha_k$ curve as shown in Figure \ref{opt_1}. To understand this behavior we split the regularization $\delta(k)$ as in \eqref{two_parts}:
   \[\delta(k)=\delta_1(k)+\delta_2(k), \]
   \begin{figure}[h!]
   \centering
 \includegraphics[scale=0.38]{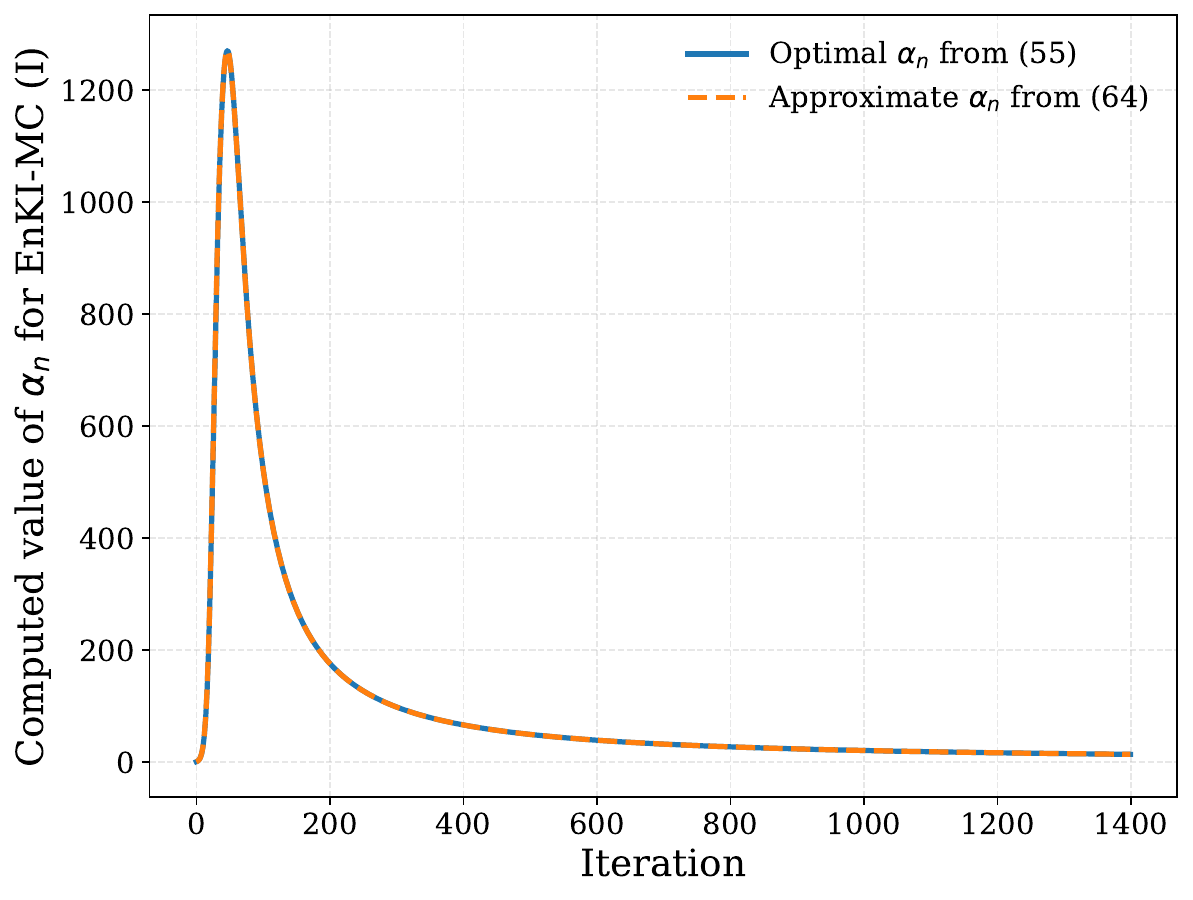}
\caption{Optimal correction factor computed at each iteration of EnKI-MC (I) using \eqref{iterative_conver} and the corresponding approximation \eqref{approx_alpha} for 1D Deconvolution problem.}
\label{opt_1}
\end{figure}

   \vspace{0.1 cm}
  \hspace{-0.65 cm} {\bf{Regime (I): Increasing $\alpha_k$ for convergence acceleration}}
  \vspace{0.1 cm}
  
   In this regime $\delta_2(k)$ is very small due to the choice of a very small $\epsilon_\delta$ in \eqref{two_parts}, and the behavior is dominated by the value of $\delta_1(k)$ and the value of loss (part (I)) in \eqref{alpha_opt}.
   In the beginning of the iteration, numerically we observe that a huge condition number of condition number of  $\LRp{\L_h+\A\sC_k\A^T}$ (and hence a high value of $\delta_1(k)$ in \eqref{delta_1}) forces $\alpha_k$ to be close to $1$. Here, the huge condition number of condition number of  $\LRp{\L_h+\A\sC_k\A^T}$ is caused due to $\sigma_{max}\LRp{\A\sC_k\A^T}>>\sigma_{min}\LRp{\A\sC_k\A^T}$.
    As the iteration proceeds, the ensembles start collapsing, and $\nor{\sC_k}_2$ starts decreasing  and we see that the condition number of  $\LRp{\L_h+\A\sC_k\A^T}$ decreases as well. Consequently the regularization $\delta_1(k)$ decreases rapidly as well and $\alpha_k$ starts increasing to minimize part (I) in \eqref{alpha_opt} as seen in the initial part of the curve.   
    
   \vspace{0.1 cm}
  \hspace{-0.65 cm} {\bf{Regime (II): Decreasing $\alpha_k$ for reimposing implicit regularization in EnKI framework}}
\vspace{0.1 cm}

  Once $\delta_1(k)$ is very small, the behavior of $\alpha_k$ is influenced by the part of regularization $\delta_2(k)$ in \eqref{two_parts} and  the value of loss (part (I)) in \eqref{alpha_opt}. From the optimal solution \eqref{alpha_method_II}, it is clear that the monotonically increasing nature of regularization $\delta_2(k)$ coupled with the collapse of ensembles (decreasing  $\nor{\sC_k}_2$) causes $\alpha_k$ to decay rapidly. Since the term $\frac{\L_h}{\alpha_k}$ acts as the regularization term inside the EnKI framework (see remark \ref{stability_remark}), the  decay of $\alpha_k$ plays a crucial role
    in reimposing the regularization inside the EnKI iteration. In our numerical results, we will show that this prevents overfitting on the residuals and leads to better quality solution on termination of our algorithm.
\end{remark}

\begin{remark}
\label{approximation_remark}
Note that for EnKI-MC (I), a fixed-point iteration \eqref{iterative_conver} has to be performed within the iterative scheme \eqref{iter_new} (for each $k$) which would be computationally expensive. An approximate solution to  \eqref{alpha_method_II} may be computed  by considering the first order Taylor series approximation of $\zeta_{\delta}(\alpha)$ about $\alpha_{k-1}^*$ as follows:
\[ \alpha_k^*\approx\zeta_{\delta}(\alpha_{k-1}^*)+\zeta_{\delta}'(\alpha_{k-1}^*)\LRp{\alpha_k^*-\alpha_{k-1}^*},\]
where we  neglected the higher-order terms. Thus, we arrive at an expression for $\alpha_k^*$ as follows:
\begin{equation}
    \alpha_k^*\approx  \alpha_{k-1}^*+\frac{\zeta_{\delta}(\alpha_{k-1}^*)-\alpha_{k-1}^*}{1-\zeta_{\delta}'(\alpha_{k-1}^*)}, \quad \alpha_0^*=1,
    \label{approx_alpha}
\end{equation}
and we use $\delta(k)$ given by \eqref{delta_val} in computing $\zeta_{\delta}(\alpha_{k-1}^*)$ given in \eqref{alpha_method_II}. Figure \ref{opt_1} shows the comparison between $\alpha_k^*$ computed using \eqref{alpha_method_II} (fixed-point iteration) and the approximate formulae \eqref{approx_alpha} for the 1D Deconvolution problem. We see from Figure \ref{opt_1} that the computed $\alpha_k^*$ from both approaches matches closely justifying the use of  approximation \eqref{approx_alpha} in our case.
\end{remark}

\subsection{EnKI-MC (II): Ensemble specific multiplicative covariance correction strategy }
\label{conv_imp_2}
Note that in the EnKI update equation \eqref{iter_new}, the  multiplicative factor $\alpha(\ub_k,\ k)$ in \eqref{mul_cov} is the same for each ensemble  member. However, note that one may assign a distinct correction factor $\alpha^{(i)}(\ub_k,\ k)$ to each ensemble member, allowing larger updates for ensembles that are farther from convergence compared to those that are already close. In this section, we consider a strategy where the update equation for each ensemble, uses a different multiplicative correction to the covariance. In particular, we consider the following modification to the update rule \eqref{iter_new}:
\begin{equation}
  \ub_{k+1}=g_m^{(II)}(\bu_k,\ {\boldsymbol{\alpha}}(\bu_k,\ k)) = \begin{bmatrix}
\bu_k^{(1)} \\
\bu_k^{(2)} \\
\vdots \\
\bu_k^{(N)}
\end{bmatrix} + 
\begin{bmatrix}
\alpha^{(1)}(\bu_k,\ k)\sC(\ub_k)\A^T \\
\alpha^{(2)}(\bu_k,\ k)\sC(\ub_k)\A^T \\
\vdots \\
\alpha^{(N)}(\bu_k,\ k)\sC(\ub_k)\A^T 
\end{bmatrix}
\odot
\begin{bmatrix}
\Blambda^{(1)}(\bu_k,\ \alpha^{(1)}(\bu_k,\ k)) \\
\Blambda^{(2)}(\bu_k,\ \alpha^{(2)}(\bu_k,\ k)) \\
\vdots \\
\Blambda^{(N)}(\bu_k,\ \alpha^{(N)}(\bu_k,\ k))
\end{bmatrix},
\label{fixed_point_equation_dif}
\end{equation}
where ${\boldsymbol{\alpha}}(\bu_k,\ k)=[{\alpha}^{(1)}(\bu_k,\ k)),\dots {\alpha}^{(N)}(\bu_k,\ k))]$, and
    \begin{equation}
        \Blambda^{(i)}(\ub_k,\ \alpha^{(i)}(\bu_k,\ k))=\LRp{\L+\A\alpha^{(i)}(\bu_k,\ k)\sC(\ub_k)\A^T}^{-1}\LRp{\db -\A\ub^{(i)}_k}.
        \label{iterative_lambda_dif}
    \end{equation}
We call \eqref{fixed_point_equation_dif} as EnKI-MC (II). For designing the function $\alpha^{(i)}(\bu_k,\ k)$ in \eqref{fixed_point_equation_dif} we follow the same strategy in section \ref{design}.
In particular, instead of considering a sample averaged dual function \eqref{pert_d}, one may define the perturbed sample-specific dual function  (for the $i^{th}$ ensemble) by introducing  the multiplicative factor $\alpha_k^{(i)}:=\alpha^{(i)}(\bu_k,\ k)$ as follows:
\begin{equation}
\begin{aligned}
       \DexpectSi^k\LRp{\alpha^{(i)}_k} :=& -\frac{1}{2}({\Blambda}_k^{(i)}(\alpha^{(i)}_k))^T \LRp{\L_h+ \alpha^{(i)}_k\A \sC_k\A^T}{\Blambda}_k^{(i)}(\alpha^{(i)}_k)-({\Blambda}_k^{(i)}(\alpha^{(i)}_k))^T{\bf{r}}_k^{(i)},
\end{aligned}
\label{samp_d_e_e}
\end{equation}
where
\begin{equation}
\begin{aligned}
    \sC_k=&\frac{1}{N}\sum_{i=1}^N\LRp{\ub_k^{(i)}-\bar{\ub}_k}\LRp{\ub_k^{(i)}-\bar{\ub}_k}^T,\quad  \overline{\ub}_k=\frac{1}{N}\sum_{i=1}^N\ub_k^{(i)},\\
    {\bf{r}}_k^{(i)}=& \db -\A {\ub}_k^{(i)},\ \ {\Blambda}_k^{(i)}(\alpha_k^{(i)})= \LRp{\L_h+\alpha_k^{(i)}\A \sC_k\A^T}^{-1}\LRp{\A {\ub}_k^{(i)}-\db}.
\end{aligned}   
\label{r_n_i}
\end{equation}
Note that \eqref{samp_d_e_e} is  the optimal sample averaged dual function 
for a given $i^{th}$ ensemble where the averaging is done only to estimate the covariance $\sC_k$. Similar to \eqref{inf_dual} as  $k\rightarrow \infty$ one has: 
\begin{equation}
         \lim_{k\rightarrow \infty}\DexpectSi^k(\alpha_k^{(i)})=
        \frac{1}{2}\LRp{\A {\ub}_{\infty}^{(i)}-\db}^T\L_h^{-1}\LRp{\A {\ub}_{\infty}^{(i)}-\db},
      \label{inf_ii}
\end{equation}
where we assumed $\lim_{k\rightarrow \infty}\alpha^{(i)}(\bu_k,\ k)=1$, and that the EnKI iteration \eqref{fixed_point_equation_dif} converges and all the samples collapses, i.e. one has $\lim_{k\rightarrow \infty}\sC_{k}={\bf{0}}$. Now may now consider the following optimization problem for faster convergence of each ensemble independently:
\begin{equation}
    \min_{\alpha_k^{(i)}}\sS(\alpha_k^{(i)}) := \min_{\alpha_k^{(i)}}\LRs{\LRp{\DexpectSi^{k}(\alpha_k^{(i)})-\lim_{k\rightarrow \infty}\DexpectSi^k(\alpha_k^{(i)})}^2+\delta^{(i)}(k) \times \LRp{\alpha_k^{(i)}-1}^2},\quad i=1,\dots N.
    \label{alpha_opt_new}
\end{equation}
where the regularization parameter $\delta^{(i)}(k)$ is chosen to satisfy $\lim_{k\rightarrow \infty} \delta^{(i)}(k)=\infty $. Following similar assumptions and derivations in Theorem \ref{opt_inf_the}, Proposition \ref{prop_conv_ban} and Remark \ref{approximation_remark}, the optimal solution  $\alpha_k^{(i)}$ is computed as:
\begin{equation}
    \alpha_k^{(i)}\approx  \alpha_{k-1}^{(i)}+\frac{\zeta_{\delta}^{(i)}\LRp{\alpha_{k-1}^{(i)}}-\alpha_{k-1}^{(i)}}{1-\frac{d \zeta_{\delta}^{(i)}\LRp{\alpha}}{d\alpha}\Bigg |_{\alpha=\alpha_{k-1}^{(i)}}}, \quad \alpha_0^{(i)}=1,
    \label{approx_alpha_II}
\end{equation}
where the function $\zeta^{(i)}_{\delta}\LRp{\alpha}$ is given by:
\begin{equation}
\zeta^{(i)}_{\delta}\LRp{\alpha}=1+\frac{\LRp{({\bf{r}}_k^{(i)})^T(\M(\alpha))^{-1}{\bf{r}}_k^{(i)}}\LRp{({\bf{r}}_k^{(i)})^T(\M(\alpha))^{-1}\A \sC_k\A^T(\M(\alpha))^{-1}{\bf{r}}_k^{(i)}}}{4\delta^{(i)}(k)},
\label{zeta_ii}
\end{equation}
\begin{equation}
    \delta^{(i)}(k)=\frac{3}{4q}\LRp{\lambda_{max}^2\LRp{\A \sC_k\A^T}\frac{\nor{{\bf{r}}_k^{(i)}}_2^4}{(\mu+ \lambda_{min}\LRp{\A\sC_k\A^T})^4}}+\epsilon_{\delta}\times k,
    \label{delta_ii}
\end{equation}
where $\epsilon_{\delta}\in \R^+$, $M(\alpha)$ is defined in  Theorem \ref{opt_inf_the}, ${\bf{r}}_k^{(i)},\ \sC_k$ are given in \eqref{r_n_i}.

The algorithm for EnKI-MC (I) and EnKI-MC (II) is provided in Algorithm \ref{Algo_full} and Algorithm \ref{Algo_full_II}. The EnKI iteration is terminated based on the relative change in the solution between consecutive iterations, as specified in line 2 of Algorithm \ref{Algo_full}. Another criteria that one may use in practice is based on Morozov discrepancy principle \cite{iglesias2021adaptive}. However, this requires prior knowledge on the noise level in the observation data.  In this work, we assume that the noise level in the data is unknown.
\paragraph{Choice of hyperparameters in Algorithm \ref{Algo_full} and \ref{Algo_full_II}}
Note that one of the key hyperparameter used in Algorithm \ref{Algo_full} and Algorithm \ref{Algo_full_II} is $\epsilon_{\delta}$ that appears in \eqref{delta_val}. A very high value of $\epsilon_\delta$ forces the optimal correction $\alpha_k^*$ to be small thereby affecting (reducing) the speed of convergence of the algorithm. On the other hand, a very small value of $\epsilon_\delta$ can cause $\alpha_k^*$ to be very large at some iteration of EnKI-MC thereby affecting the stability and destroying the implicit regularization in ENKI (see remark \ref{stability_remark}). In practice, it is essential to control the parameter $\epsilon_\delta$ to bound the maximum value of $\alpha_k^*$ at any iteration $k$. Line 5 of Algorithm \ref{Algo_full} takes care of this issue by dynamically adjusting $\epsilon_\delta$ such that $\alpha_k^*<\alpha_{bound}$. In practice, we start the  Algorithm \ref{Algo_full} with the choice of a very small value for $\epsilon_\delta$ ($10^{-15}$). We set $\alpha_{bound}=10000$ to provide a safety margin that prevents numerical instability, while it is rarely activated during normal algorithm operation (see, for instance, Figure~\ref{ENKF_20_samples_conv_valid}). The remaining details in Algorithm \ref{Algo_full} are self explanatory.
\begin{algorithm} 
	\caption{EnKI-MC (I) Algorithm for linear inverse problem}
	\hspace*{\algorithmicindent} \textbf{Input}: Initial list of ensembles $\ub_0=[\ub_0^{(1)},\ \ub_0^{(2)},\ \ub_0^{(3)}\dots \ub_0^{(N)}]$ (column vector) , data $\db$, linear operator $\A$, data covariance matrix $\L_h=\mu I$, function $g_m^{(I)}(.,.)$ in \eqref{iter_new}, function $\zeta_\delta\LRp{.}$ in \eqref{alpha_method_II},    function $\delta(k)$ in \eqref{delta_val}, tolerance $\epsilon_c$, parameter $\epsilon_\delta,\ q$, parameter $\alpha_{bound}$, maximum iteration $I_{max}$. \\ 
	\begin{algorithmic}[1] 
  		\State set $k =0$ set $\ub_{-1}$ such that $\LRs{\frac{\nor{\ub_{0}-\ub_{-1}}_2}{\nor{\ub_{-1}}_2}}>\epsilon_c$,  
		\While{$\LRs{\frac{\nor{\ub_{k}-\ub_{k-1}}_2}{\nor{\ub_{k-1}}_2}}>\epsilon_c$ {\bf{and}} $k\leq I_{max}$} 
        \State Compute the value of $\delta(k)$ using \eqref{delta_val} using parameters  $\epsilon_\delta,\ q$. 
 \State Compute the multiplicative correction factor $\alpha_k^*$ using \eqref{approx_alpha}.
  \State If $\alpha_k^*>\alpha_{bound}$, increase $\epsilon_{\delta}$ and recompute $\alpha_k^*$ (line 3 and line 4) until  $\alpha_k^*<\alpha_{bound}$.
   \State Update solution $\bu_k$ as follows:
   \[  \bu_{k+1}=g_m^{(I)}(\bu_k,\ \alpha_k^*)\]
   \State Set $k=k+1$
		\EndWhile
	\end{algorithmic} \label{Algo_full}
\hspace*{\algorithmicindent} \textbf{Output}: Converged solution $\ub^*_{\mathrm{EnKI-MC(I)}}$.
\end{algorithm}

In addition to details in Algorithm \ref{Algo_full},  Algorithm  \ref{Algo_full_II}, employs an additional parameter $K$ (line 3) to determine how often one recomputes the optimal multiplicative correction factor $\alpha_k^{(i)}$. Note that this strategy is introduced for computational efficiency to reduce the number of computations especially in scenarios where the number of ensembles (i.e $N$) is large.
\begin{algorithm} 
	\caption{EnKI-MC (II) Algorithm  for linear inverse problem}
	\hspace*{\algorithmicindent} \textbf{Input}: Initial list of ensembles $\ub_0=[\ub_0^{(1)},\ \ub_0^{(2)},\ \ub_0^{(3)}\dots \ub_0^{(N)}]$ (column vector) , data $\db$, linear operator $\A$, data covariance matrix $\L_h=\mu I$, function $g_m^{(II)}(.,.)$ in \eqref{fixed_point_equation_dif}, function $\zeta_\delta^{(i)}\LRp{.}$ in \eqref{zeta_ii},  function $\delta^{(i)}(k)$ in \eqref{delta_ii},  tolerance $\epsilon_c$, parameter $\epsilon_\delta,\ q$, parameter $K$, parameter $\alpha_{bound}$,  maximum iteration $I_{max}$.\\ 
	\begin{algorithmic}[1] 
  		\State set $k =0$ set $\ub_{-1}$ such that $\LRs{\frac{\nor{\ub_{0}-\ub_{-1}}_2}{\nor{\ub_{-1}}_2}}>\epsilon_c$,  
		\While{$\LRs{\frac{\nor{\ub_{k}-\ub_{k-1}}_2}{\nor{\ub_{k-1}}_2}}>\epsilon_c$ {\bf{and}} $k\leq I_{max}$} 
        \If{n $\%$ K==0}
       \State Compute the value of $\delta^{(i)}(k)$ (for each $i$) using \eqref{delta_ii} using parameters  $\epsilon_\delta,\ q$. 
 \State Compute the multiplicative correction factor $\alpha_k^{(i)}$ (for each $i$) using \eqref{approx_alpha_II}.
  \State If $\max\{\alpha_k^{(i)}\}_{i=1}^N>\alpha_{bound}$, increase $\epsilon_{\delta}$ and recompute $\alpha_k^{(i)}$ (line 4 and line 5) until  $\max\{\alpha_k^{(i)}\}_{i=1}^N<\alpha_{bound}$.
 \Else
 \State Set $\alpha_k^{(i)}=\alpha_{k-1}^{(i)}$, $\forall i$.
        \EndIf
   \State Update solution $\bu_k$ as follows:
   \[  \bu_{k+1}=g_m^{(II)}(\bu_k,\ \boldsymbol{\alpha}_k),\]
   where ${\boldsymbol{\alpha}_k}=[{\alpha}^{(1)}_k,\dots {\alpha}^{(N)}_k]$,
   \State Set $k=k+1$
		\EndWhile
	\end{algorithmic} \label{Algo_full_II}
\hspace*{\algorithmicindent} \textbf{Output}: Converged solution $\ub^*_{\mathrm{EnKI-MC(II)}}$.
\end{algorithm}

\section{Implementation of EnKI-MC (I) and EnKI-MC (II)  for non-linear inverse problem}

\label{non_linear}
The EnKI iteration \eqref{new_ss} for nonlinear inverse problem where the forward map is denoted as $\G(\ub): \R^n\mapsto \R^m$ is given by \cite{schillings2017analysis}:
\begin{equation}
    \ub_{k+1}^{(i)} =\ub_k^{(i)} +\sC_{up}(\ub_k) \LRp{\L_h+\sC_{pp}(\ub_k)}^{-1}\LRp{\db -\G(\ub_k^{(i)})},
    \label{non_lin_enki}
\end{equation}
where
 \[\sC_{up}(\bu_k)=\frac{1}{N}\sum_{i=1}^N\LRp{\ub_k^{(i)}-\bar{\ub}_k}\LRp{\G(\ub_k^{(i)})-\bar{\G}_k}^T,\ \ \sC_{pp}(\bu_k)=\frac{1}{N}\sum_{i=1}^N\LRp{\G(\ub_k^{(i)})-\bar{\G}_k}\LRp{\G(\ub_k^{(i)})-\bar{\G}_k}^T,\]
 \[ \bar{\G}_k=\frac{1}{N}\sum_{i=1}^N \G(\ub_k^{(i)}),\ \ \bar{\ub}_k=\frac{1}{N}\sum_{i=1}^N\ub_k^{(i)}. \]
Note that if one assumes  $\G(\ub)\approx \G(\bar{\ub}_k)+\A(\ub-\bar{\ub}_k)$, i.e. we consider the linearization of $\G(\ub)$ about  $\bar{\ub}_k$, and $\A$ is the Jacobian of $\G$ w.r.t $\ub$ evaluated at $\bar{\ub}_k$, then it is easy to see that $\sC(\ub_k)\A^T\approx \sC_{up}(\bu_k)$, and $\A\sC(\ub_k)\A^T \approx \sC_{pp}(\bu_k)$ in  \eqref{new_ss}. For EnKI-MC (I) and EnKI-MC (II), the same multiplicative correction factor is applied to both $\sC_{pp}(\bu_k)$ and $\sC_{up}(\bu_k)$.
The implementation of EnKI-MC (I) and EnKI-MC (II) algorithm  for nonlinear inverse problems is then straightforward by  
simply replacing $\A\sC_k\A^T$ with $\sC_{pp}(\bu_k)$ in Theorem \ref{opt_inf_the}, Proposition  \ref{prop_conv_ban}, and defining ${\bf{r}}_k^{(i)}= \db -\G ({\ub}_k^{(i)})$, $\bar{{\bf{r}}}_k= \db -\bar{\G}_k$.

\section{Numerical Verification} 
\label{numeric_section}
In this section we present numerical results concerning the EnKF and EnKI algorithm for a variety of inverse problems namely:
\begin{itemize}
    \item 1D Deconvolution Problem.
    \item Initial Condition Inversion in an Advection-Diffusion Problem.
    \item Initial condition inversion  in a Lorenz 96 model.
        \item Nonlinear parameter inversion in a steady-state heat equation.
\end{itemize}
In particular, we present two types of result: i)  Verifying the  non-asymptotic convergence result for EnKF algorithm (Theorem \ref{non_asymp}); ii) Analysing the performance of EnKI-MC(I) and EnKI-MC(II) in convergence acceleration of vanilla EnKI algorithm.

\subsection{General settings for numerical results}
In this section we present the general numerical settings used in Algorithm \ref{Algo_full} and Algorithm \ref{Algo_full_II} for all problems. We set
 $\epsilon_\delta=10^{-15}$, $q=0.99$ for all problems. The tolerance $\epsilon_c$ is set to $10^{-5}$ for linear inverse problems, and $10^{-4}$ for nonlinear  inverse problems. We choose $\L_h=(0.1)^2I$, where $I$ is the $m\times m$ identity matrix. The parameter $\alpha_{bound}$ is set to $10000$, $I_{max}$ is set to $10,000$. 
 The parameter $K$ in Algorithm \ref{Algo_full_II} is chosen as $5$. Further, before deploying EnKI-MC (II), we also considered $10$ initial iterations of EnKI-MC (I) to reduce the residuals.

For all the problems, we consider contaminating the observation $\db$ with $2\%$ additive Gaussian noise. The details on the dimensions $m$ and $n$ for each inverse problem is provided in the respective sections.  
The number of ensembles $N$
for each problem is chosen to be much smaller than the dimension $n$,  considering the practical deployability of the EnKI algorithm when the forward map $\sG$ is computationally expensive. Note that choosing a large $N$ increases the computational cost of the EnKI algorithm.

 For numerical demonstration we draw these $N$ samples from assumed distributions. We measure the accuracy in terms of the relative
error with respect to (w.r.t.) the truth defined by:
\[ \mathrm{relative\ error\ w.r.t\ truth}=\frac{\nor{\ub^*-\ub^{\dagger}}_2}{\nor{\ub^\dagger}},\]
where $\ub^*$ is the converged EnKI solution, and $\ub^\dagger$ is the ground truth solution. For each problem, we also provide comparison of our convergence acceleration strategies with other  methods. The description of these methods are provided in Appendix  \ref{other_approaches}.

\subsection{1D Deconvolution Problem}

Deconvolution, the inverse problem corresponding to  convolution process finds enormous application in signal processing and image processing. For demonstration, we consider 
the domain divided into $1000$ sub-intervals. The kernel in 1D deconvolution is constructed as \cite{mueller2012linear}:

\[\Psi(x) = C_a \LRp{x+a}^2 \LRp{x-a}^2\]
where $a = 0.235$ and the constant $C_a$ is chosen to enforce normalization condition \cite{mueller2012linear}.  
For generating a ground truth solution $\ub^\dagger$, we consider a Gaussian prior with covariance matrix of the form  $\sC=\beta \LRp{ k(X,\ X)}$ with $k(.,.)$ denoting the exponential sine squared kernel function \cite{rasmussen2006gaussian} with length scale=0.5,  periodicity=20, and $X$ denotes the $1000\times 1$ vector of equally spaced points between $[-10,\ 10]$.  The ground truth is sampled for  $\beta=0.0001$, i.e $\ub^\dagger\sim \GM{0}{\sC}$ and the observation data $\db$ is simulated based on $\ub^\dagger$. In this case the dimension $m=n=1000$. For EnKI algorithm, the number of ensembles $N$ is chosen as $20$ and each member is sampled from $\GM{0}{\sC}$.

\subsubsection{Numerical verification of Theorem \ref{non_asymp} (Non-Asymptotic Convergence of EnKF)}
\label{non_asymp_decon}
In this section, we will numerically verify Theorem \ref{non_asymp} via a sampling-based approach.  The range of $\epsilon$ where Theorem \ref{non_asymp} is valid is computed as  $0\leq \epsilon \leq 0.002$  based on the available information (we set $\eta=0.99$).

Now note that for computing the theoretical  probability in Theorem \ref{non_asymp}, one needs to know the constants $c_1,c_2$. For verifying Theorem \ref{non_asymp}, we first note the following:
\begin{enumerate}
    \item By fixing $N$, for each integer $k$ one can draw samples of  $\{\del^i_k\}_{i=1}^N$, and $\{\sigb^i_k\}_{i=1}^N$ from the respective distributions as given in Theorem \ref{non_asymp}. Thus, $(\eb_{\ub})_k$ can be computed for each $k=1,\dots K$. Here, we choose $K=100000$.
    \label{it_1}
    \item Now given $c$ and $\epsilon$, the probability that $\eb_{\ub} \le c\varepsilon$ holds can be numerically computed as:
    \[ \frac{\mathrm{No\ of\ times\ (\eb_{\ub})_k\ satisfies\ the\ inequality \ of \ all\ k=1,\dots K}}{K}.\]
    \label{it_2}
 \item In addition,  if we know constants $c_1,\ c_2,\ c$, the minimum probability  that $\eb_{\ub} \le c\varepsilon$ can be theoretically computed using the bound in Theorem \ref{non_asymp}.
  \label{it_3}
\end{enumerate}
Note that since constants $c_1,\ c_2$ are not known, numerical verification of Theorem \ref{non_asymp} is not possible in general.   However, we attempt to roughly verify Theorem \ref{non_asymp} by showing that there exists constants $c,\ c_1,\ c_2$ for the given $\sC,\ \Sigma,\ \A,\ \ub_0,\ \db$ such that the theoretically predicted probability in Theorem \ref{non_asymp} is indeed a lower bound for all values of $N,\ \epsilon$.  
For this, we consider five  choices for $N$ (i.e. $5,\ 10,\ 15,\  20,\ 25$) and, with $\epsilon=0.002$, use the sampling-based procedure outlined above  to regress approximate values for $c,\ c_1,\ c_2$\footnote{Even though Theorem \ref{non_asymp} provides an estimate for the constant $c$, this is not necessarily the best  possible value, and therefore, we regress for $c$ as well to obtain a better practical value.}  that minimize the difference between the numerically computed and theoretically computed probabilities, while satisfying the constraint that the numerically computed probability  is greater then the theoretical probability.

 The estimated constants $c,\ c_1,\ c_2$ are then used to compute the theoretical vs. numerical probability for all different cases of $N$ and  $0\leq \epsilon \leq 0.002$. The results are shown in Figure \ref{non_asymp_decon}. From Figure \ref{non_asymp_decon} it is evident that there exists constants $c_1,\ c_2,\ c$  such that the theoretically predicted probability is indeed a lower bound for all values of $N,\ \epsilon$. However, one drawback of this procedure is that  the estimated constants $c_1,\ c_2$ are dependent on $\sC,\ \Sigma, \ etc$ whereas the true underlying constants $c_1,\ c_2$ are absolute positive constants.
\begin{figure}[h!]      
\hspace{-0.5 cm}
  \begin{tabular}{c}

      \begin{tabular}{c}

          \centering
          \includegraphics[scale=0.38]{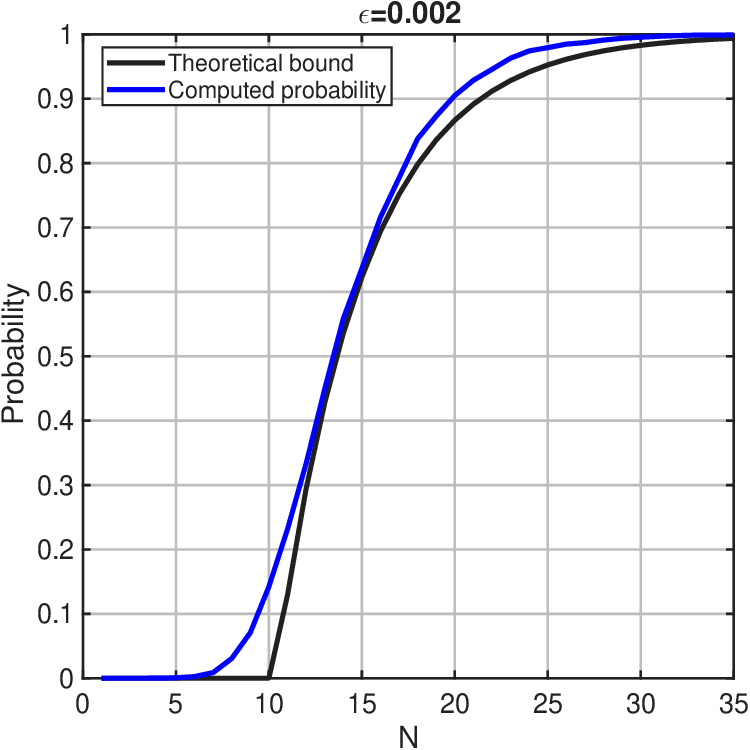}

      \end{tabular}

    \hspace{-0.3 cm}

      \begin{tabular}{c}

          \centering
          \includegraphics[scale=0.38]{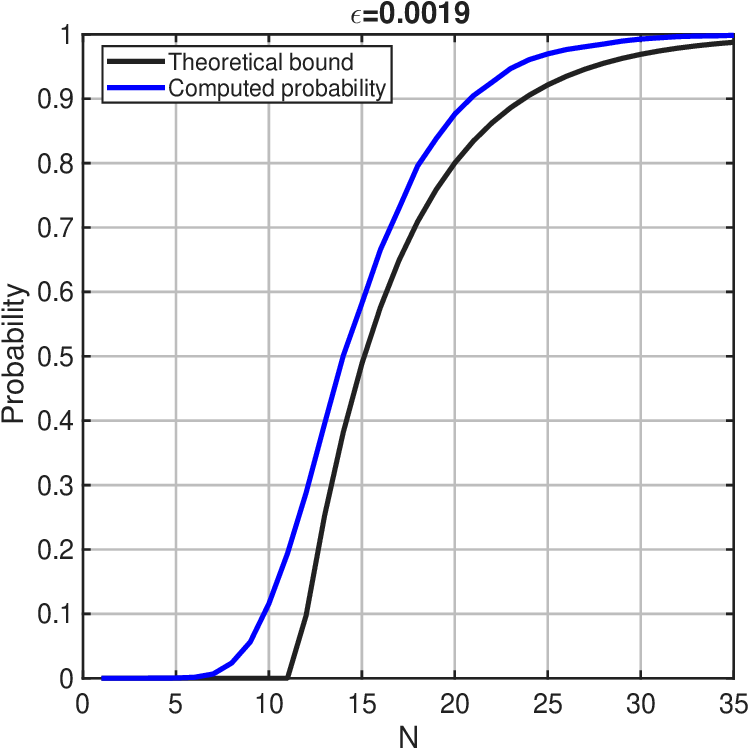}

      \end{tabular}

    \hspace{-0.3 cm}

      \begin{tabular}{c}

          \centering
          \includegraphics[scale=0.38]{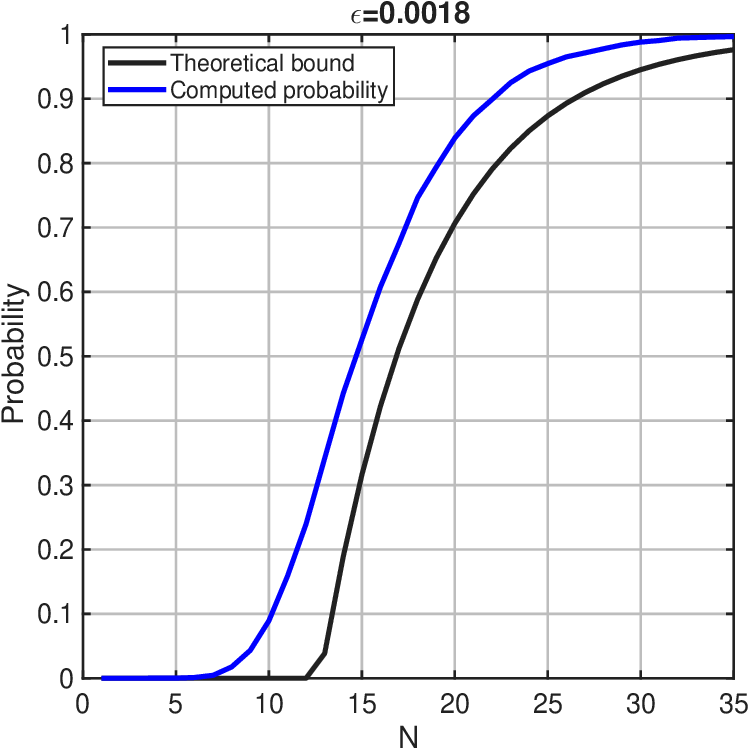}

      \end{tabular}

  \end{tabular}\\
   
    \begin{tabular}{c}
\hspace{-0.5 cm}
      \begin{tabular}{c}

          \centering
          \includegraphics[scale=0.38]{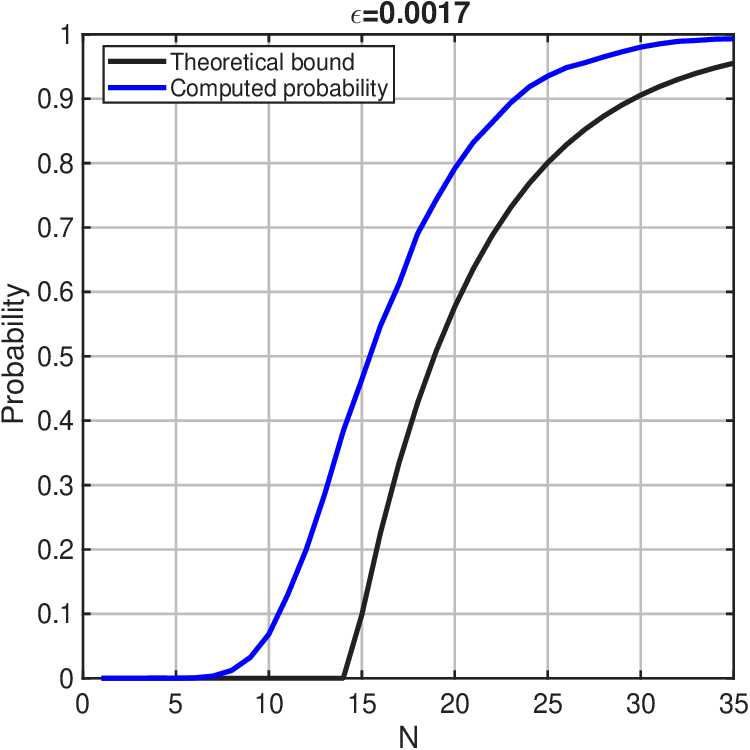}

      \end{tabular}

    \hspace{-0.3 cm}

      \begin{tabular}{c}

          \centering
          \includegraphics[scale=0.38]{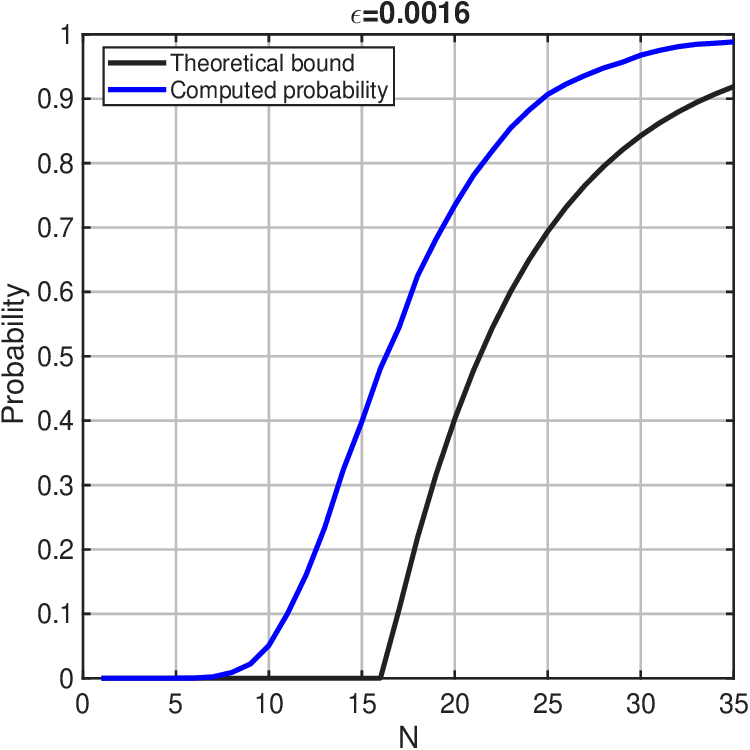}

      \end{tabular}

    \hspace{-0.3 cm}

      \begin{tabular}{c}

          \centering
          \includegraphics[scale=0.38]{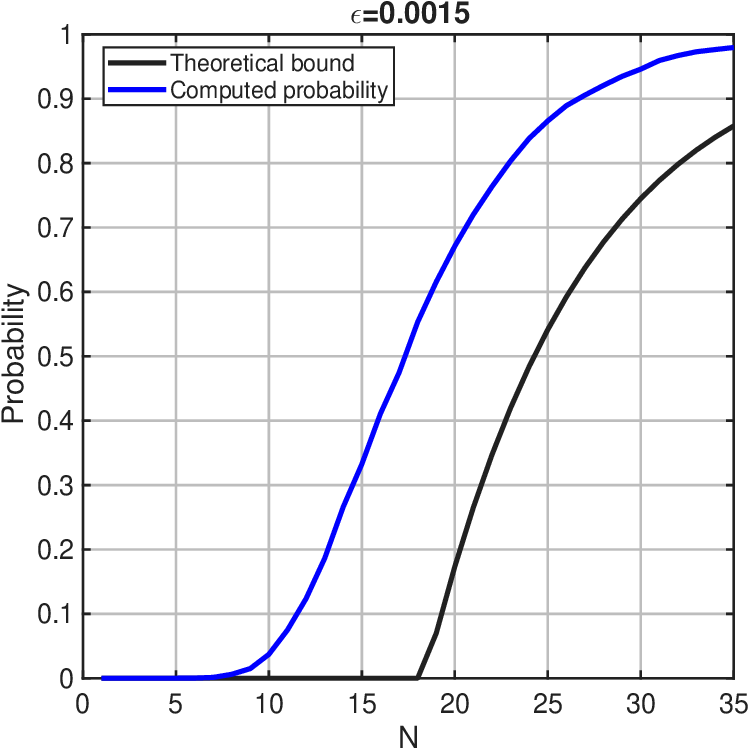}

      \end{tabular}

  \end{tabular}
    
    \caption{Numerical verification of Theorem \ref{non_asymp} (non-asymptotic convergence): Theoretically predicted probability is lower than the numerically computed probability for different values of $\epsilon$ and $N$. }
  \label{non_asymp_decon}
\end{figure}  

\subsubsection{Convergence acceleration with   EnKI-MC (I) and EnKI-MC (II)}

In this section, we demonstrate the effectiveness of EnKI-MC (I) and EnKI-MC (II) in accelerating the convergence of Vanilla EnKI algorithm.  
Figure \ref{ENKF_20_samples_conv_valid} (left) shows the evolution of the optimal $\alpha_k$ with respect to iteration $k$ for EnKI-MC (I). The reason for the increasing-decreasing nature of $\alpha_k$ has been explained in detail in remark \ref{curve_nature}. Further, Figure \ref{ENKF_20_samples_conv_valid} (right) shows the evolution of the optimal $\alpha_k^{(i)}$ computed using \eqref{approx_alpha_II} for EnKI-MC (II). We see the step pattern in Figure \ref{ENKF_20_samples_conv_valid} (right) since $\alpha_k^{(i)}$ is only recomputed every $K=5$ iterations (refer  Algorithm  \ref{Algo_full_II}) for computational efficiency.

\begin{figure}[h!]
\hspace{-0.5 cm}
\begin{minipage}{.5\textwidth}
 \includegraphics[scale=0.38]{Figures/1D_Deconv/alpha_EnKI_MC_I.pdf}
\end{minipage}%
\begin{minipage}{.5\textwidth}
    \includegraphics[scale=0.38]{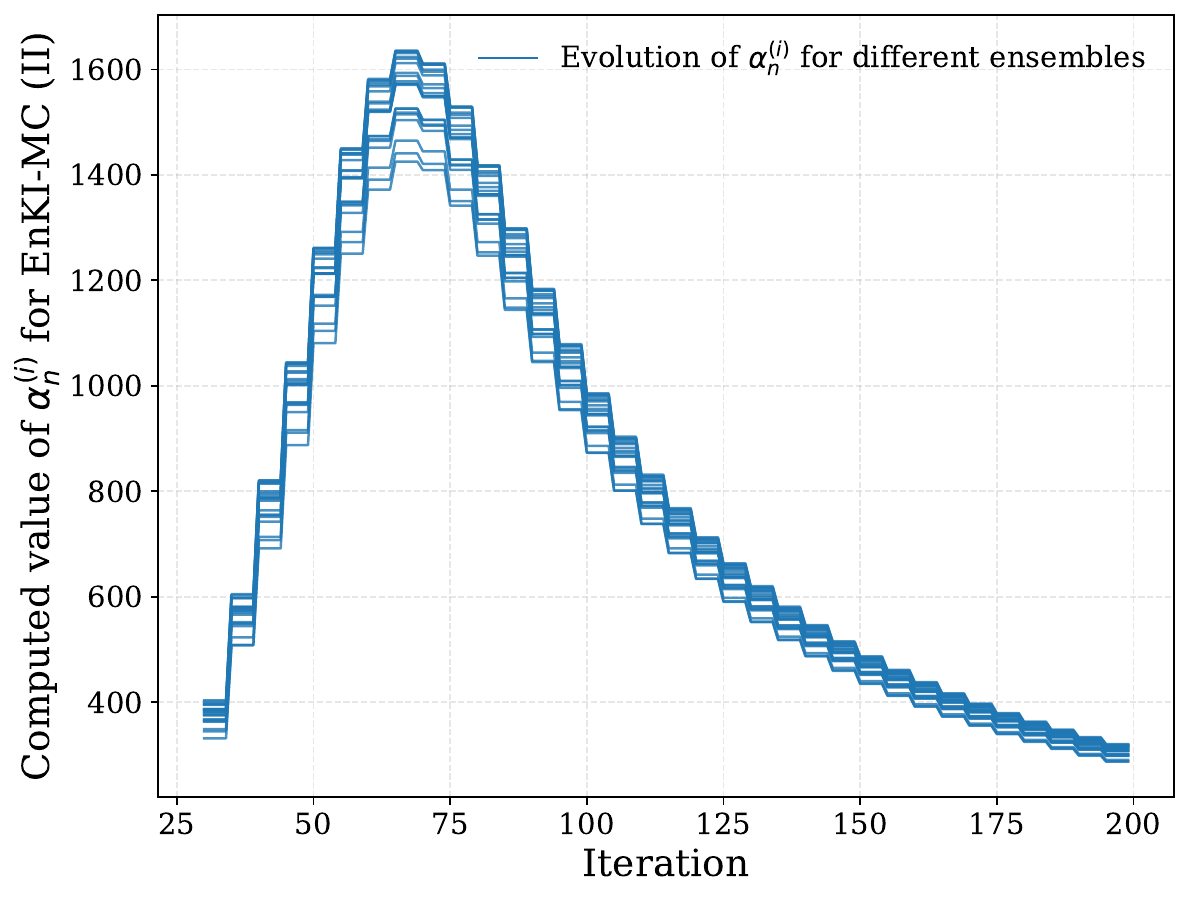}
\end{minipage}
\caption{Left to right: Optimal correction factor computed at each iteration of EnKI-MC (I) using \eqref{iterative_conver} and the corresponding approximation \eqref{approx_alpha}; Optimal correction factor computed at each iteration of EnKI-MC (II) using \eqref{approx_alpha_II} for all ensembles (The step pattern is due to  $\alpha_n^i$ being recomputed only every $K=5$ iterations).}
\label{ENKF_20_samples_conv_valid}
\end{figure}

Figure \ref{ENKF_dec_log} shows the convergence of different variants of EnKI algorithm. Figure \ref{ENKF_dec_log} (left) shows the convergence of relative error with respect to truth where we observe that EnKI-MC (I) and EnKI-MC (II) converges the fastest out of all the approaches.  Figure \ref{ENKF_dec_log} (right) shows a similar trend for the convergence of the loss $\sJ_{EnKI}$ defined in \eqref{enki_loss_approx}. Note that all the approaches use the same criteria and tolerance (line 2 in Algorithm \ref{Algo_full}) for termination of the algorithm. From Table \ref{num_result1} we see that EnKI-MC (II) is the best performing method providing a speedup of about $2.7$ times (in terms of computational time) over Chada et al. \cite{chada2019convergence} (the next best method in terms of convergence speed). We also see that EnKI-MC (II) provides the best relative error  of $0.100$ with respect to the truth. In addition to convergence improvement, we also observed that even when continuing the iterations way beyond the exit criteria  (line 2 in Algorithm \ref{Algo_full}), none of the approaches were able to produce the best relative error produced by EnKI-MC (II) at any iterations. The reason for this accuracy improvement in EnKI-MC (II) needs further theoretical investigation and will be conducted in the future. Further, we also observed that while methods such as Chada et al. \cite{chada2019convergence} and Nesterov Acceleration \cite{vernon2025nesterov} provides significant speedup over vanilla EnKI, these methods were prone to overfitting issue, i.e the relative error w.r.t truth started increasing significantly after some iterations while the loss continued to decrease. Especially, Chada et al. \cite{chada2019convergence} with it's monotonically increasing nature of the correction factor (i.e $\alpha_k=k^{0.8}$) led to significant loss of regularization inside the EnKI framework as the iteration proceeds (refer remark \ref{stability_remark}, and remark \ref{curve_nature} for more explanation). Whereas, EnKI-MC (I) and EnKI-MC (II) due to it's inbuilt nature of $\alpha_k$ decreasing after some iteration, led to reimposing strong regularization inside the EnKI iterations and less prone to overfitting leading to  better quality solution upon termination of the algorithm. In section \ref{init_cond_inv} (Initial Condition Inversion in an Advection-Diffusion Problem), these effects are much more pronounced and are clearly visible in Figure \ref{ENKF_IC_log}.

Figure \ref{ENKF_20_samples_inflation_I} also shows the inverse solution achieved by different methods where we see that all the methods did produce good solutions for this problem.

\begin{table}[h!]
\caption{Performance comparison of different Ensemble Kalman Inversion Algorithms for 1D Deconvolution problem.}
\centering
\begin{tabular}{|c | c | c |c|c|c| }
        \hline
     & Time to reach  & No of &    Relative error   \\ 
      & ($\epsilon$ tolerance) & iterations & w.r.t ground truth    \\  \hline
   EnKI-MC (I)  & $11$ min& 319 & 0.105\\ \hline
         EnKI-MC (II)  &$9$ min& 291 & 0.100   \\ \hline
Chada et al. \cite{chada2019convergence} &$25$ min &$1897$ & 0.107  \\  \hline
Vanilla EnKI, \eqref{new_ss} &$39$ min  &3087 &0.111  \\  \hline
Nesterov Acceleration \cite{vernon2025nesterov}  &$28$ min &2223 & 0.111 \\  \hline
Anderson et al. \cite{anderson2007adaptive} &$83$ min & 3086 &0.111  \\  \hline
\end{tabular} 
 \label{num_result1}
\end{table}
\begin{figure}[h!]
\centering
\begin{minipage}{.5\textwidth}
 \includegraphics[scale=0.38]{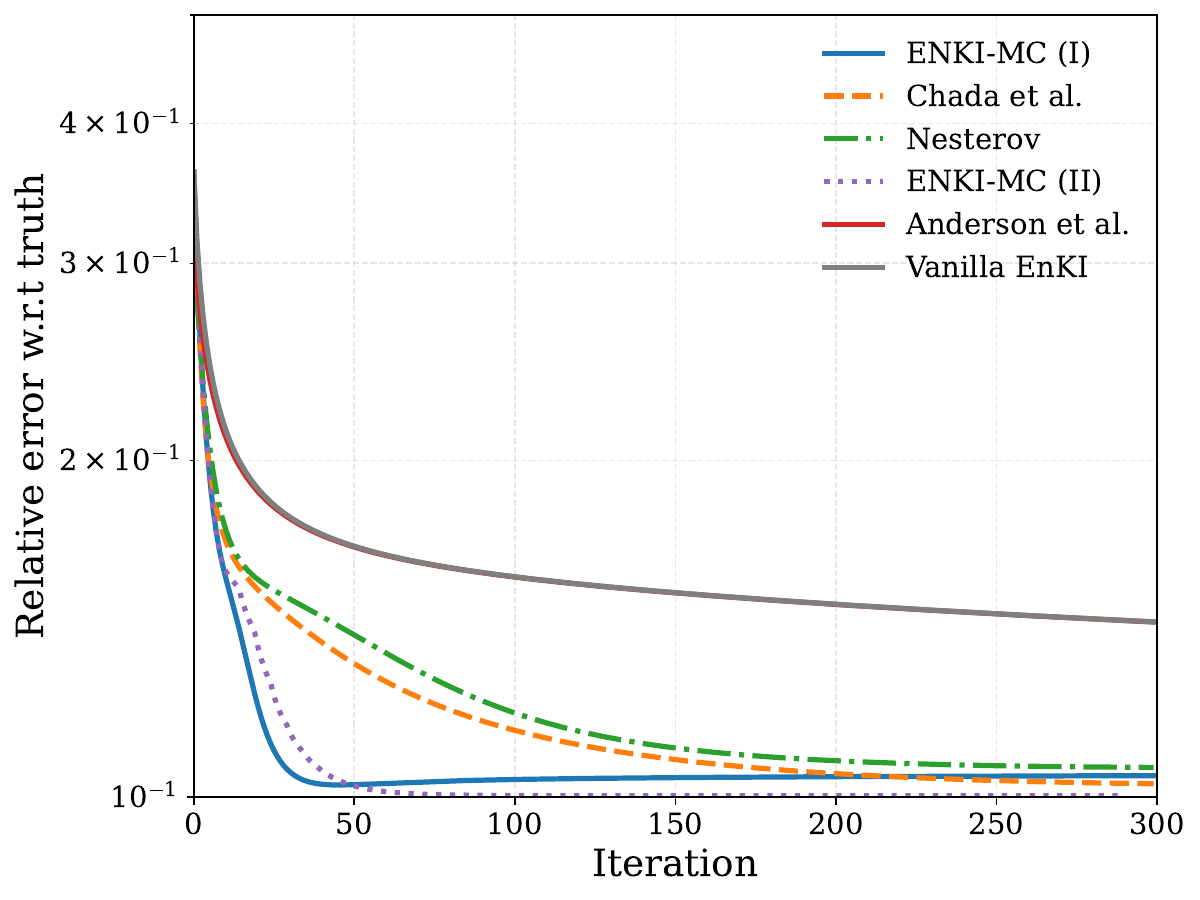}
\end{minipage}%
\begin{minipage}{.5\textwidth}
\hspace{-0.1 cm}
 \includegraphics[scale=0.38]{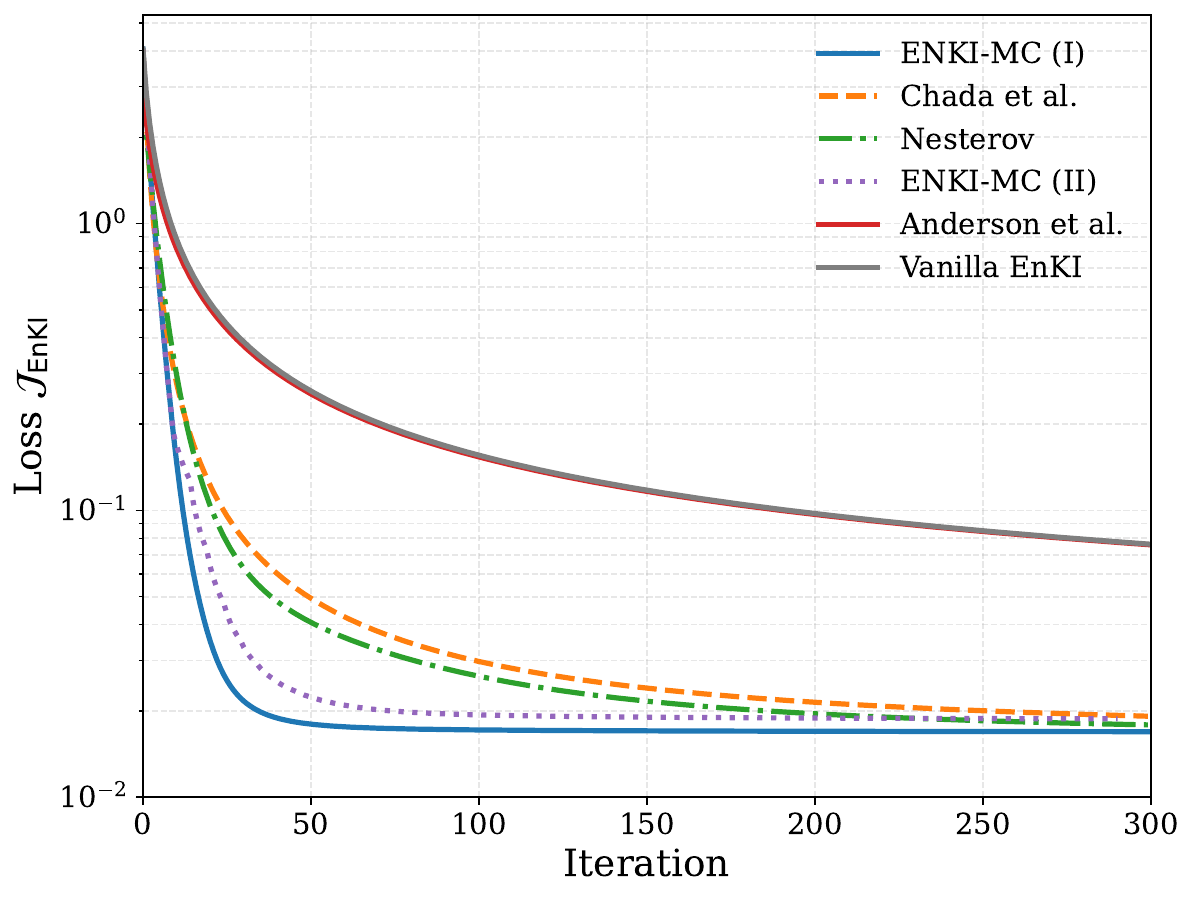}
\end{minipage}
\caption{Performance of different variants of EnKI  for the deconvolution Problem. Left to right: Convergence of relative error w.r.t truth for different methods;  Convergence of loss defined in \eqref{enki_loss_approx}.}
\label{ENKF_dec_log}
\end{figure}

\begin{figure}[h!]
\centering
 \includegraphics[scale=0.38]{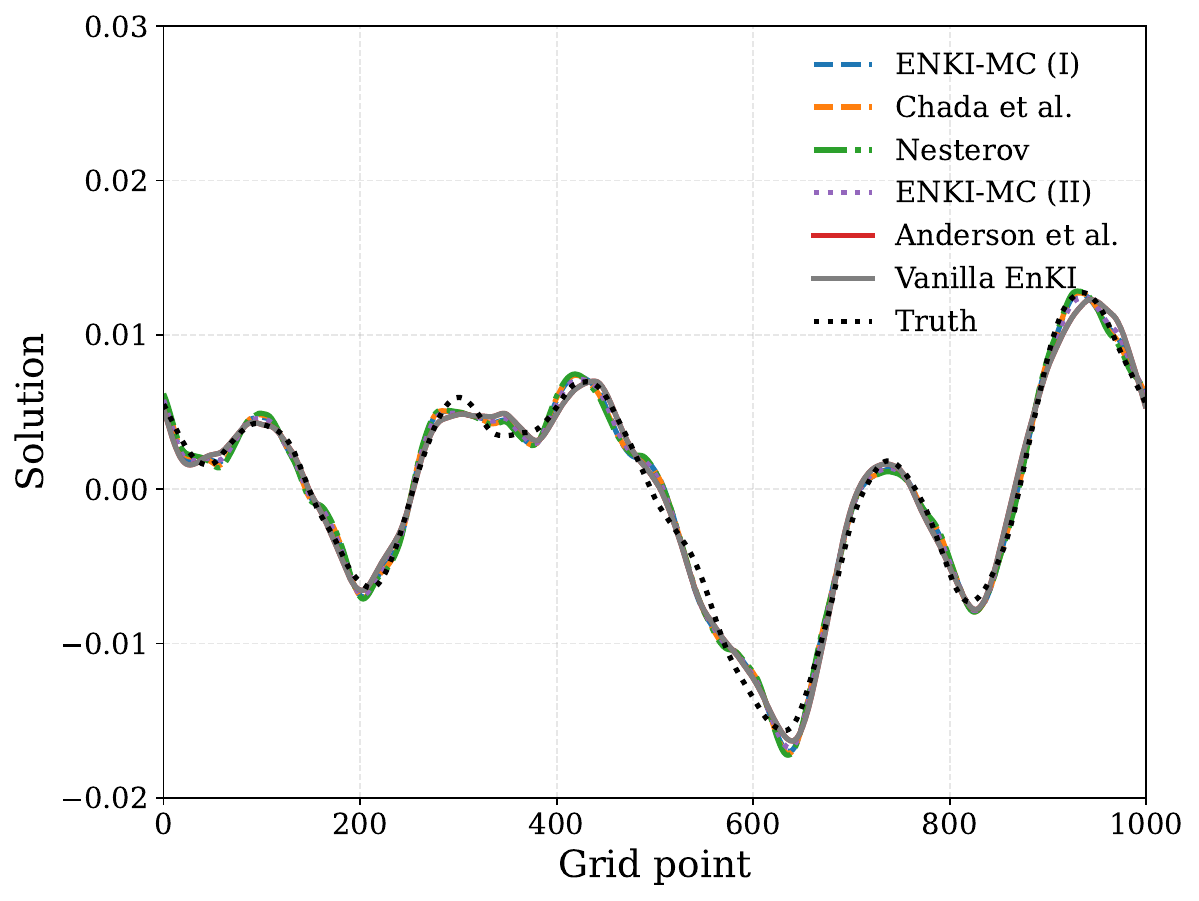}
\caption{Inverse solution achieved by different EnKI variants. }
\label{ENKF_20_samples_inflation_I}
\end{figure}

\subsection{Initial Condition Inversion in an Advection-Diffusion Problem}
\label{init_cond_inv}
In this section we consider a linear inverse problem governed by a parabolic PDE \cite{AustinMerced2017}.
The parameter to observable map   maps an initial condition $u \in L^2(\Omega)$ to pointwise spatiotemporal observations of the concentration field $y({\bf{x}},t)$, where ${\bf{x}}=(x_1,\ x_2)$. The solution of the advection-diffusion equation is given by:

\begin{equation}
    \begin{aligned}
        y_t - \kappa\Delta y + {{\bf{v}}} \cdot \nabla y &= 0 \quad {in } \quad \Omega \times(0,T),\\
 y(.,0)&=u\quad {in } \quad \Omega,\\
 \kappa \nabla y \cdot \bf{n}&=0\quad {on } \quad \partial \Omega \times (0,T). 
 \label{advection_diffusion}  
    \end{aligned}
\end{equation}
where, $\Omega\subset \R^2$ is a bounded domain, $\kappa>0$ is the diffusion coefficient, $T>0$ is the final time. The velocity field ${\bf{v}}$ is computed by solving the following steady-state Navier-Stokes equation with the side walls driving the flow:
\begin{equation}
    \begin{aligned}
         -\frac{1}{\operatorname{Re}} \Delta {{\bf{v}}} + \nabla q + {{\bf{v}}} \cdot \nabla {\bf{v}} &= 0 \quad{ in }\quad \Omega,\\
 \nabla \cdot {\bf{v}} &= 0 \quad { in } \quad \Omega,\\
 {\bf{v}} &= {\bg} \quad { on } \quad  \partial\Omega. 
    \label{velocity_field}
    \end{aligned}
\end{equation}
where, $q$ is the pressure, and $\operatorname{Re}$ is the Reynolds number.
The Dirichlet boundary condition ${\bg}\in \R^2$ is prescribed as ${\bg}={\bf{e}}_2$ (canonical basis vector) on the left side of the domain,
and $\bg={\bf{0}}$ elsewhere. Velocity boundary conditions are not prescribed on the right side of the boundary. 
\begin{figure}[h!]      
  \begin{tabular}{c}
  \hspace{-0.5 cm}
      \begin{tabular}{c}
          \centering
          \includegraphics[scale=0.35]{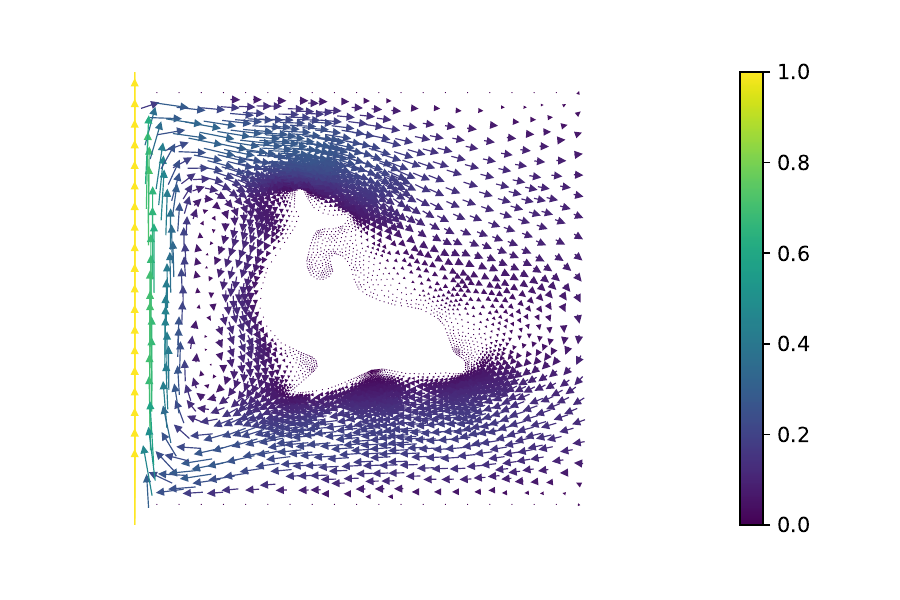}
      \end{tabular}
    \hspace{-1.2 cm}
      \begin{tabular}{c}
          \centering
          \includegraphics[scale=0.3]{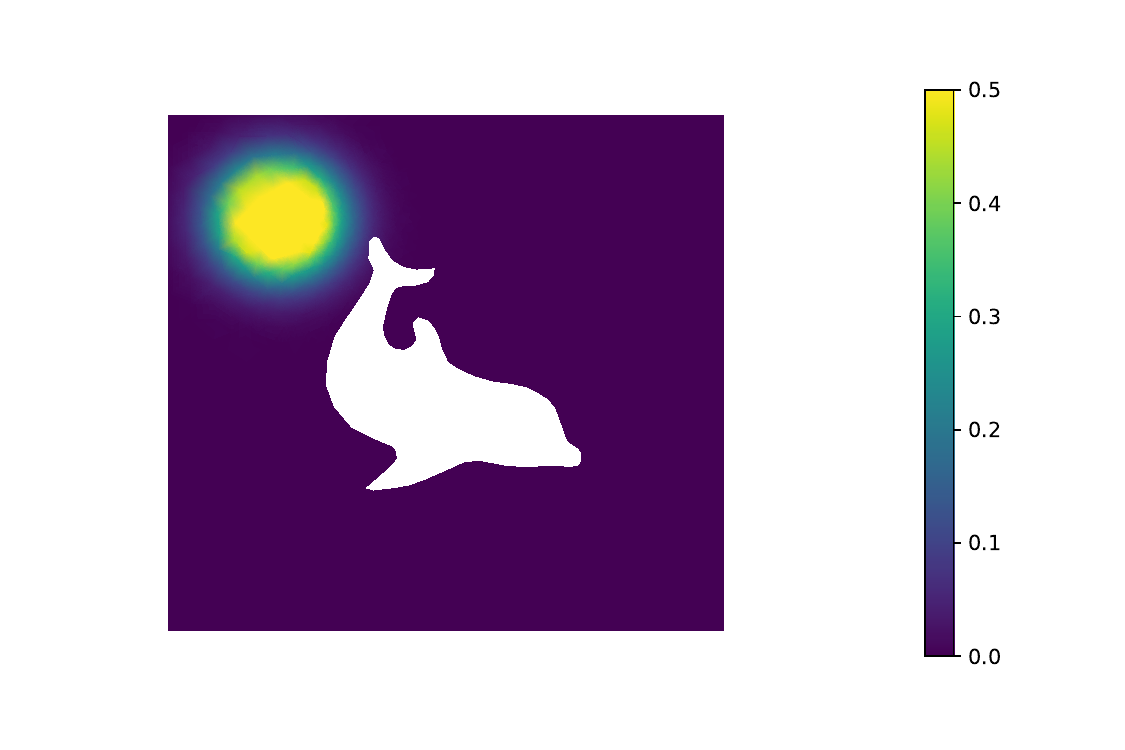}
      \end{tabular}
    \hspace{-1.2 cm}
      \begin{tabular}{c}
          \centering
          \includegraphics[scale=0.3]{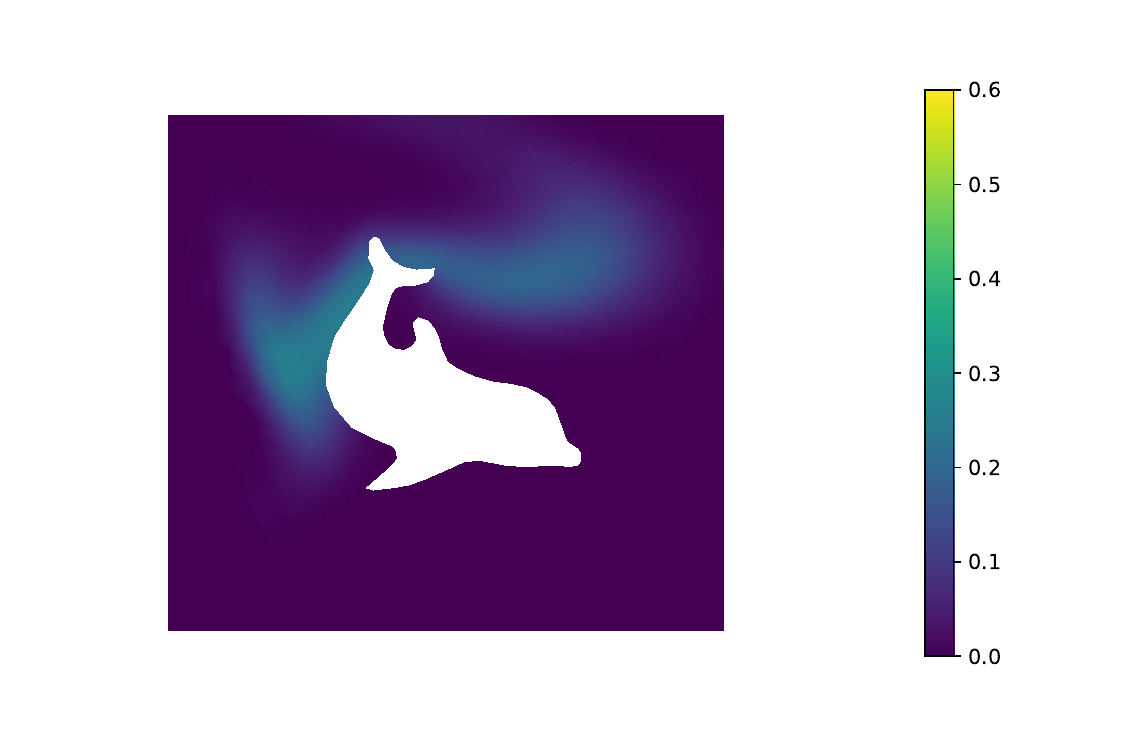}
      \end{tabular}
  \end{tabular}
\caption{ From left to right: The velocity profile; Initial condition;  Observation data used for inversion.  }
\label{velocity_profile}
\end{figure}     
The values of the forward solution $y({\bf{x}},t)$ on a set of locations $\{ {\bf{x}}_1, {\bf{x}}_2, ..., {\bf{x}}_m\}$ at the final time $T$ are extracted
and used as the observation vector $\bd\in \R^m$ for solving the initial condition inverse problem.
The  velocity profile, initial condition and the observation data used in the study are shown in Figure \ref{velocity_profile}.
The forward operator $\A$ maps the initial condition  $\ub \in \R^n$ ($n$ is the number of points on the grid) to the observation $\bd\in \R^m$. The objective is to infer the unknown initial condition  $\ub \in \R^n$ given noise contaminated observation field $\bd\in \R^m$.

In \eqref{advection_diffusion}, the diffusion coefficient is set as  $\kappa=0.001$. Following \cite{AustinMerced2017}, a mixed formulation employing $P2$ Lagrange elements for approximating the velocity field and $P1$ elements for pressure is adopted
for solving (\ref{velocity_field}) to get the velocity field.
The computed velocity field is then used to solve the advection-diffusion equation, (\ref{advection_diffusion}).
$P1$ Lagrange elements are used for the variational formulation of the advection-diffusion equation. We generate ensembles for the EnKI algorithm from a Gaussian prior, $ \GM{0.25}{\sC}$ where $\sC$ is the BiLaplacian prior given by:
\begin{equation}
  \sC=\LRp{\zeta I+\gamma \nabla \cdot (\theta \nabla)}^{-2}  
  \label{Bi_Laplacian}
\end{equation}
where $\zeta$ governs the variance of the samples,  the ratio $\frac{\gamma}{\zeta}$ governs the correlation length, $\theta$ is a symmetric positive definite tensor to introduce anisotropy in the correlation length. We choose  $\zeta=8$ and $\gamma=1$.  The ground truth is sampled from $ \GM{0.25}{\sC}$. The observation vector $\db$ is computed at the end of time $t=3s$ with $m=200$ observation points (random points in the domain).
For the present problem, we fix $n = 2868$. The number of ensembles $N$ is chosen as $80$ and each member is sampled from $ \GM{0.25}{\sC}$.

\subsubsection{Numerical verification of Theorem \ref{non_asymp} (Non-Asymptotic Convergence of EnKF)}

For this problem, we see that Theorem \ref{non_asymp} for non-asymptotic convergence is valid only for $0\leq \epsilon \leq 5.042 \times 10^{-16}$ computed based on the available information, and therefore a numerical verification of Theorem \ref{non_asymp} is not conducted for this example.

\subsubsection{Convergence acceleration with  EnKI-MC (I) and EnKI-MC (II)}
In this section, we demonstrate the effectiveness of EnKI-MC (I) and EnKI-MC (II) in accelerating the convergence of Vanilla EnKI algorithm. 

\begin{figure}[h!]
\centering
\begin{minipage}{.5\textwidth}
 \includegraphics[scale=0.38]{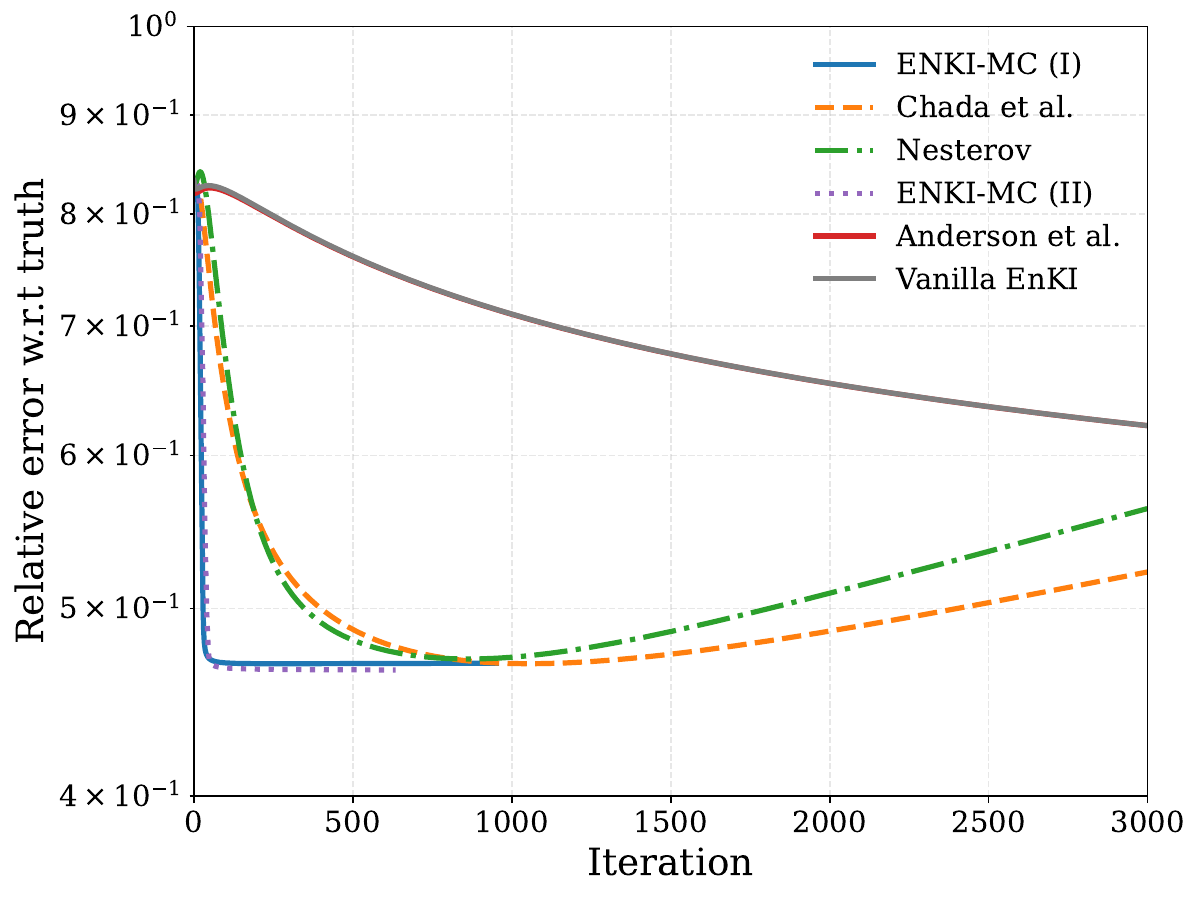}
\end{minipage}%
\begin{minipage}{.5\textwidth}
\hspace{-0.1 cm}
 \includegraphics[scale=0.38]{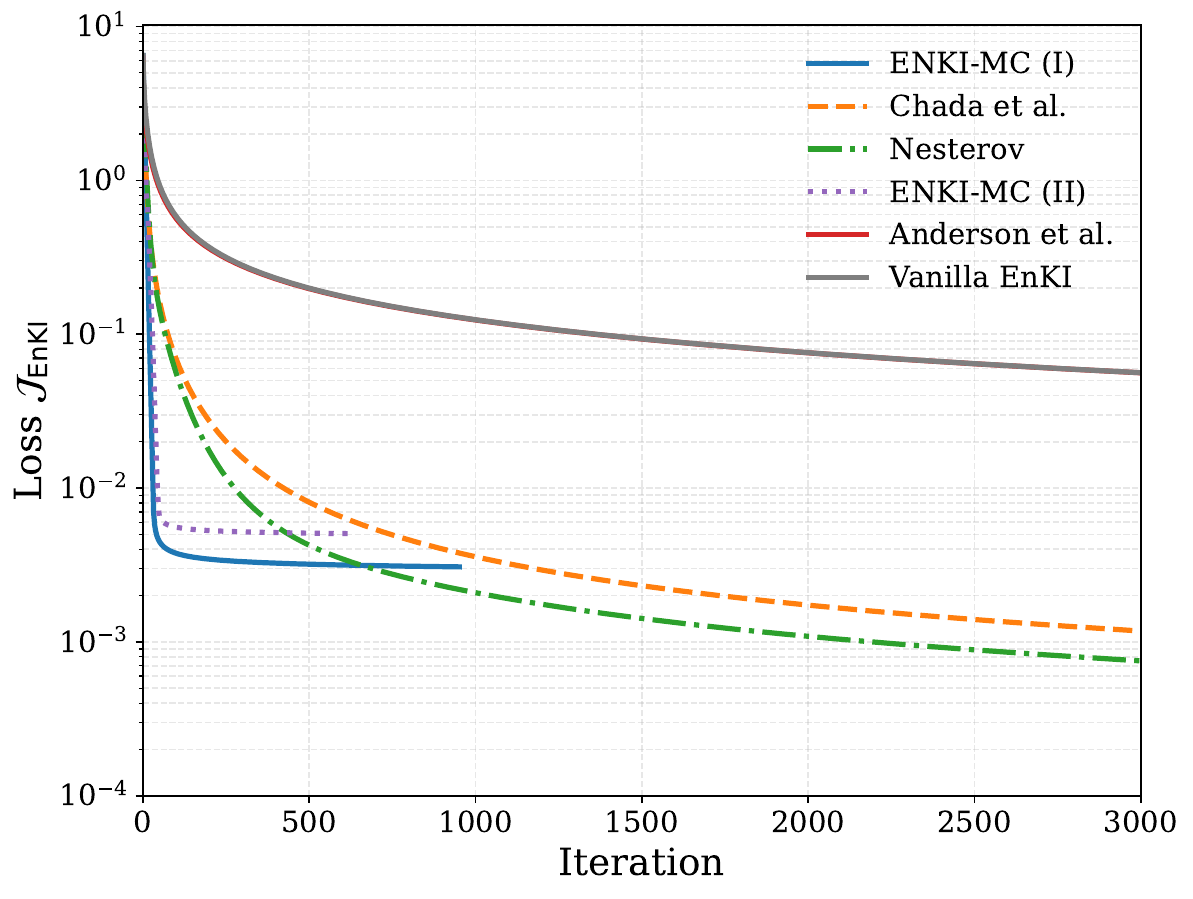}
\end{minipage}
\caption{Performance of different variants of EnKI  for the Advection-Diffusion Problem. Left to right: Convergence of relative error w.r.t truth for different methods;  Convergence of loss defined in \eqref{enki_loss_approx}.}
\label{ENKF_IC_log}
\end{figure}

Figure \ref{ENKF_IC_log} (left) shows the convergence of relative error with respect to truth where we observe that EnKI-MC (I) and EnKI-MC (II) converges the fastest out of all the approaches.  Figure \ref{ENKF_IC_log} (right) shows a similar trend for the convergence of the loss $\sJ_{EnKI}$ defined in \eqref{enki_loss_approx}. 

Moreover, we observe that methods such as those of Chada et al.~\cite{chada2019convergence} and Nesterov acceleration~\cite{vernon2025nesterov} suffer from significant overfitting: the relative error with respect to the truth begins to increase after approximately $1000$ iterations, even though the loss continues to decrease. Experiments with noise-free data $\db$ show that this divergence from the true solution persists for both Chada et al.~\cite{chada2019convergence} and Nesterov acceleration~\cite{vernon2025nesterov}, indicating that the observed overfitting is not caused by data noise.
Instead, this behavior arises because there exists a solution $\ub^* \in \sD$  (subspace spanned by the initial ensembles) that fits the misfit \eqref{enki_loss_approx} better than the true solution $\ub^\dagger$, leading to divergence from the latter. One possible remedy is to increase the number of initial ensemble members so that the subspace $\sD$ contains the true solution $\ub^\dagger$, although this significantly increases computational cost. Alternatively, the EnKI algorithm \eqref{new_ss} can be modified to incorporate additional regularization, such as the variants proposed in \cite{chada2020tikhonov,chada2019convergence}. Applying our proposed convergence improvement strategies to such variants will be explored in future work.

On the other hand, our approach seems to be immune to the overfitting issue  as evident from Figure \ref{ENKF_IC_log}. This is attributed to the fact that collapse of ensembles causes a rapid decrease in the computed $\alpha_n$ which leads to reimposing strong regularization inside the EnKI iterations (remark \ref{curve_nature}). Note that all the methods uses the same termination criteria to exit the algorithm (line 2 in Algorithm \ref{Algo_full}). 
\begin{table}[h!]
\caption{Performance comparison of different Ensemble Kalman Inversion Algorithms for initial condition inversion in an Advection-Diffusion Problem.}
\centering
\begin{tabular}{|c | c | c |c|c| }
        \hline
     & Time to reach  & No of &    Relative error \\ 
      & ($\epsilon$ tolerance) & iterations & w.r.t ground truth  \\  \hline
   EnKI-MC (I)  & $5$ min& 952 & 0.468\\ \hline
         EnKI-MC (II)  &$9$ min & 635& 0.465\\ \hline
Chada et al. \cite{chada2019convergence}  &$31$ min &10000 & 0.767   \\  \hline
Vanilla EnKI, \eqref{new_ss} &$31$  min & 10000  &0.542 \\  \hline
Nesterov Acceleration \cite{vernon2025nesterov}  & $32$ min &10000 & 0.936  \\  \hline
Anderson et al. \cite{anderson2007adaptive} & $59$ min & 10000&0.542 \\  \hline
\end{tabular} 
 \label{num_result2}
\end{table}
Table \ref{num_result2} shows that EnKI-MC (I) is the best performing method in terms of convergence time providing a speedup of about $6$ times  over Chada et al. \cite{chada2019convergence} (the next best method in terms of convergence speed). 
Table \ref{num_result2} also shows that EnKI-MC (II) provides the best relative error  of $0.465$ with respect to the truth. We saw that none of the approaches were able to produce the best relative error produced by EnKI-MC (II) at any iteration of the algorithm (even after continuing way beyond the termination criteria). In addition, we also considered an experiment where we decreased the tolerance $\epsilon_c$ in line 1 of Algorithm \ref{Algo_full} from $10^{-5}$ to $10^{-6}$.
We observed that while EnKI-MC (I) and EnKI-MC (II) showed negligibly small increase in the relative error w.r.t truth, methods such as 
Nesterov Acceleration \cite{vernon2025nesterov}  and Chada et al. \cite{chada2019convergence} showed significant divergence from the truth.

\begin{figure}[h!]
  \hspace{-0.8 cm}
    \begin{tabular}{ccc}
        \includegraphics[scale=0.3]{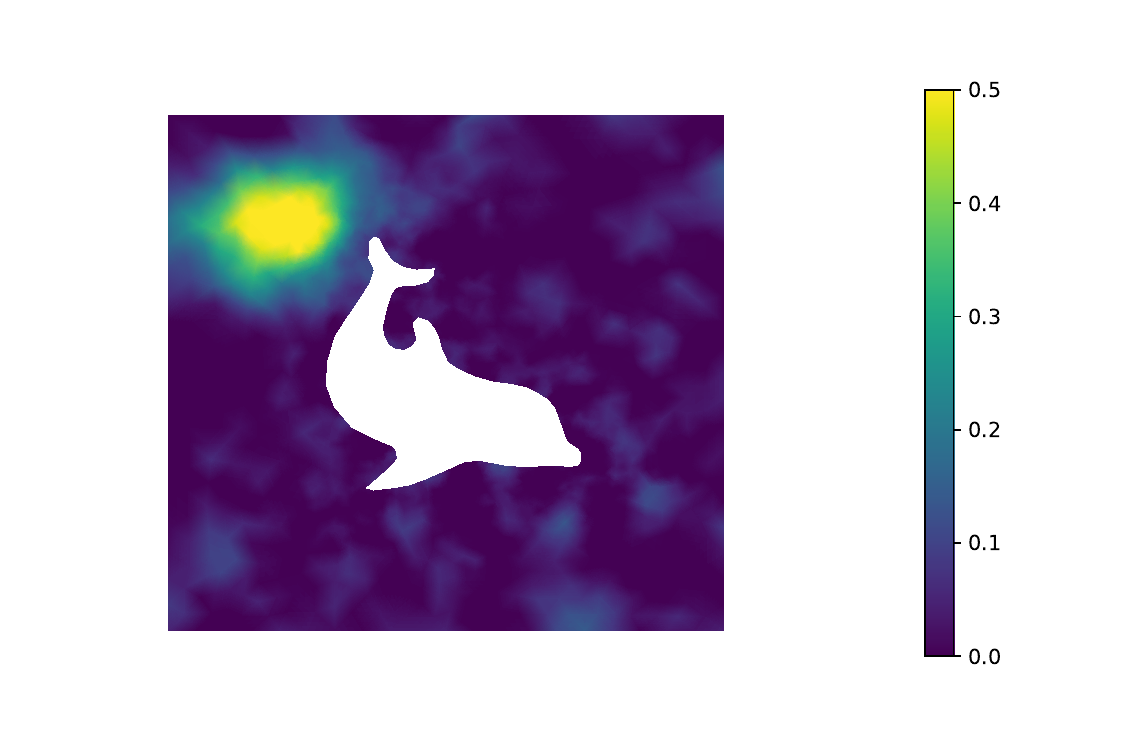} &
      \hspace{-1 cm}  \includegraphics[scale=0.3]{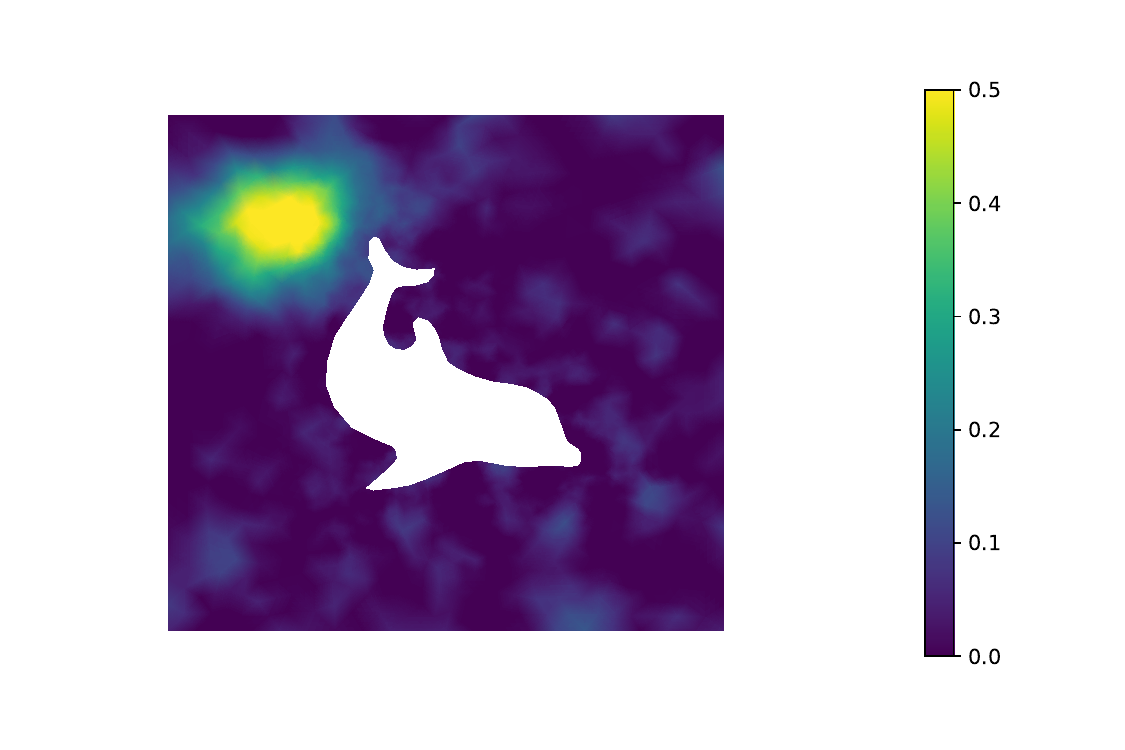} &
        \hspace{-1 cm}  \includegraphics[scale=0.3]{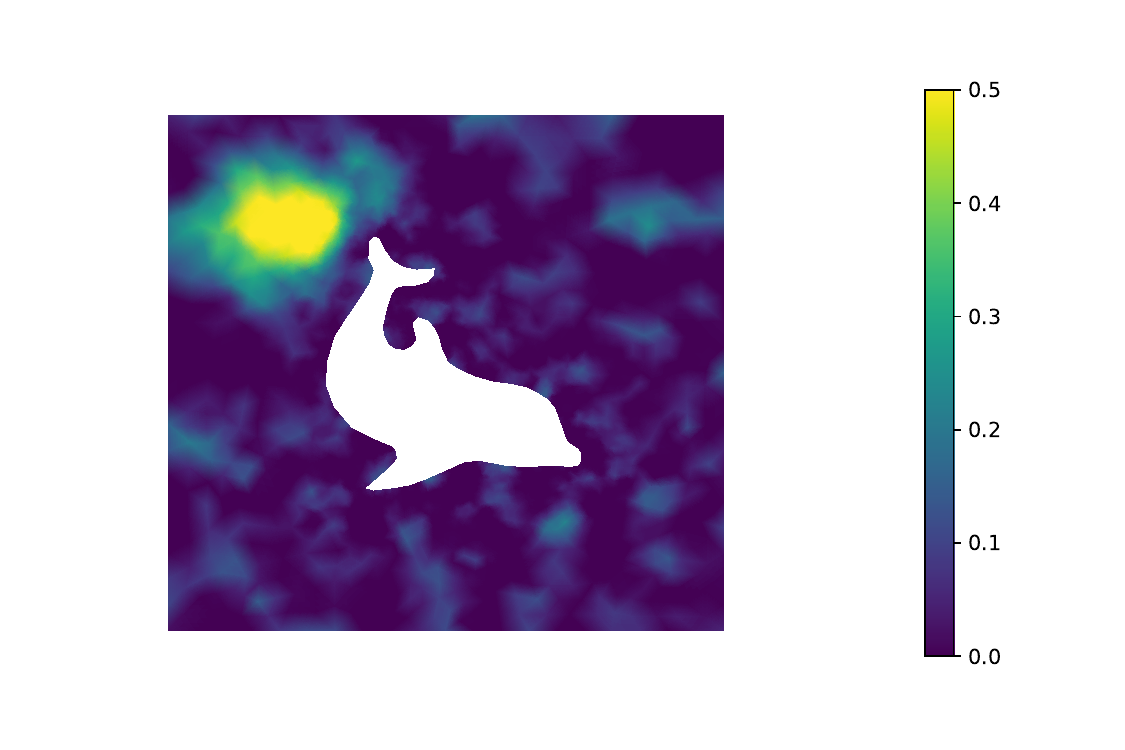} \\
    \end{tabular}

    \vspace{0.3cm} 

    \hspace{-0.8 cm}
    \begin{tabular}{ccc}
         \includegraphics[scale=0.3]{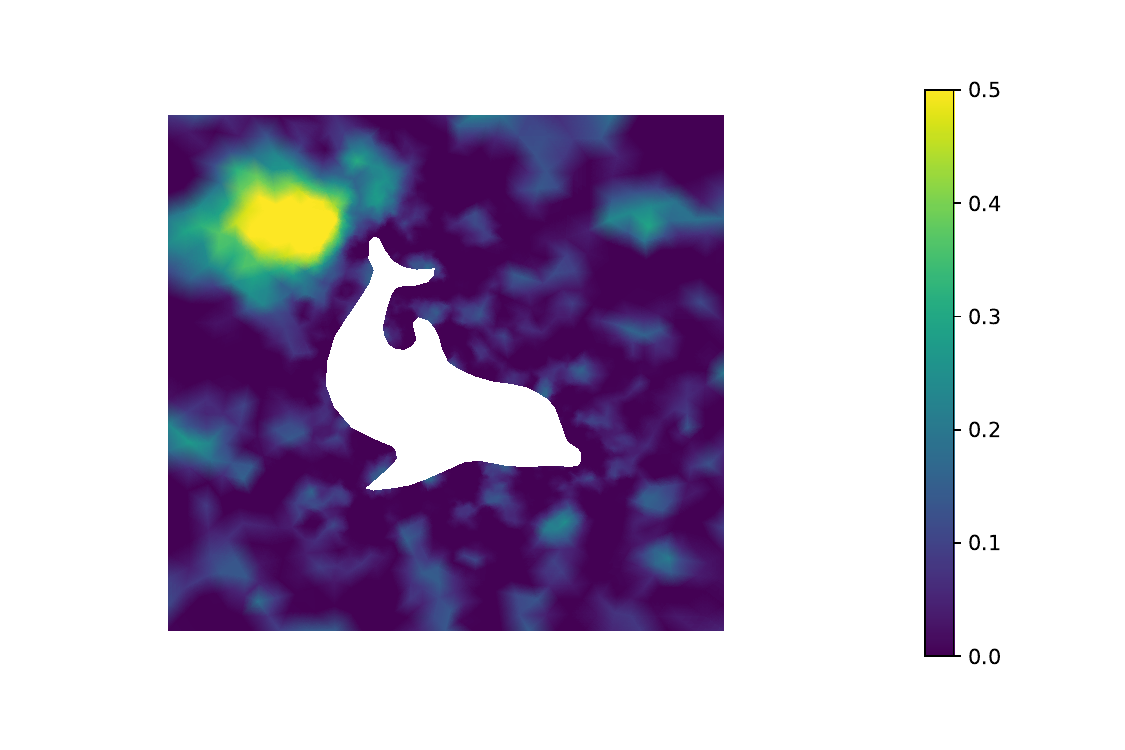}&
          \hspace{-1 cm}\includegraphics[scale=0.3]{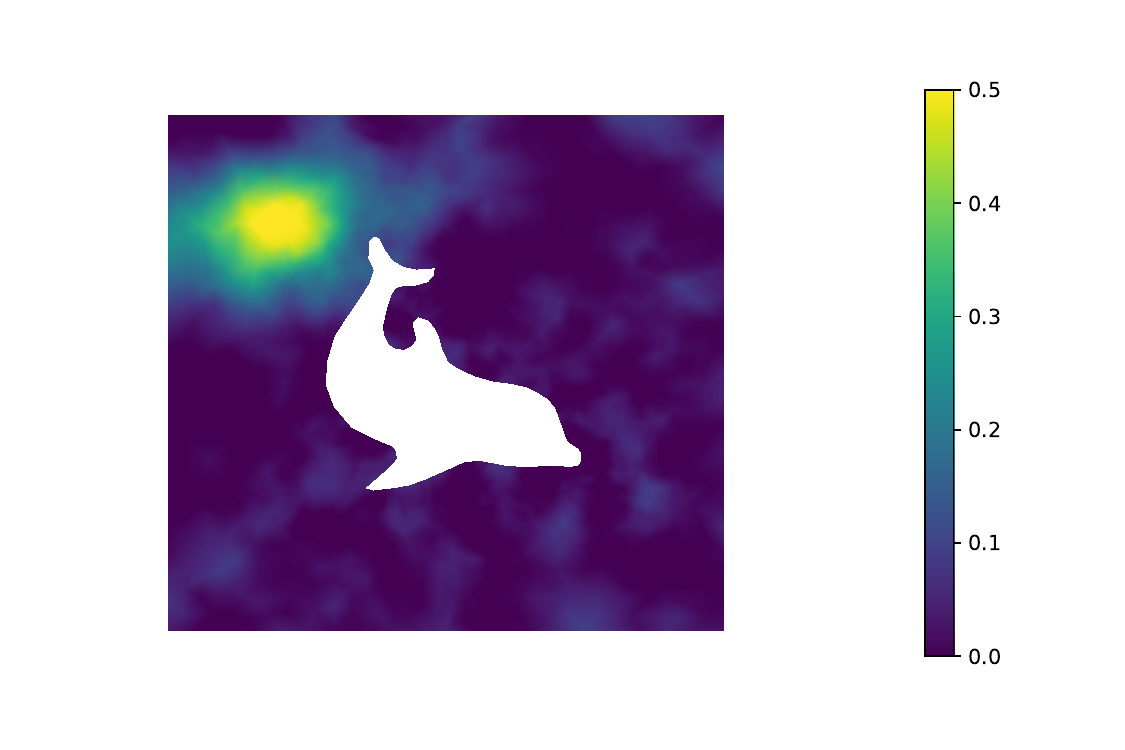} &
       \hspace{-1 cm}  \includegraphics[scale=0.3]{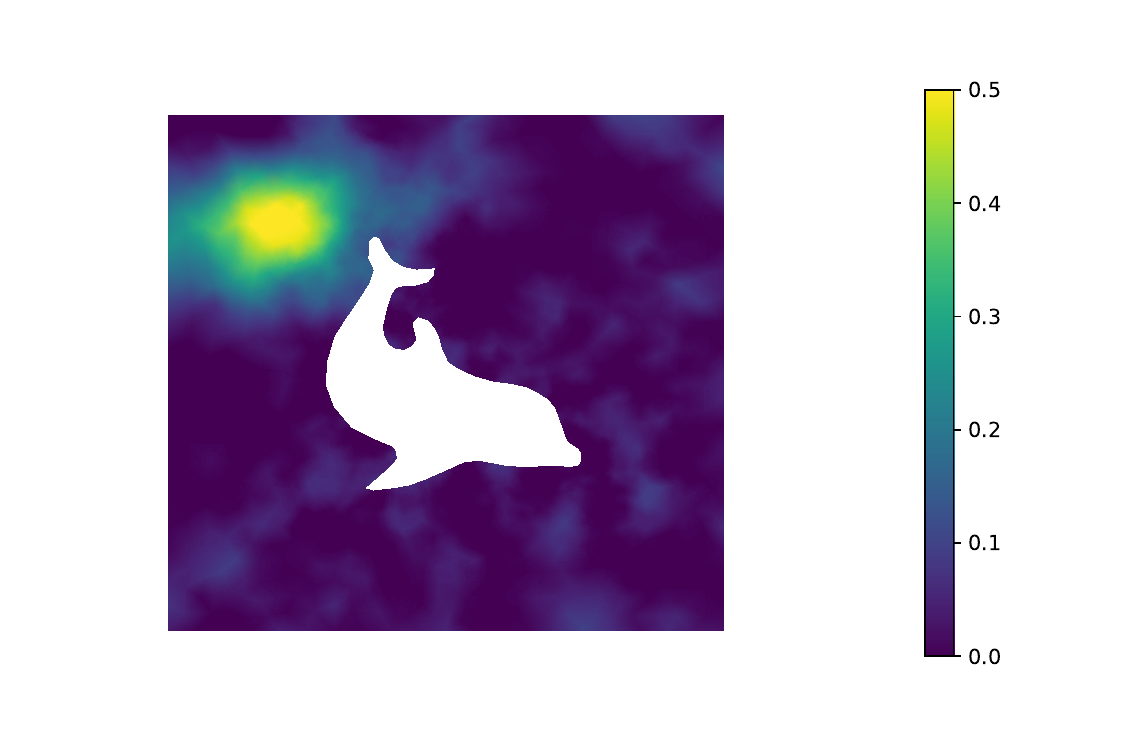} \\
    \end{tabular}

    \vspace{0.3cm} 

    \hspace{4.42 cm}
    \begin{tabular}{c}
       \includegraphics[scale=0.3]{Figures/Initial_Condition/Dophin_true.pdf} \\
    \end{tabular}

    \caption{Solutions computed using different variants of EnKI for the Advection-Diffusion Problem. 
    Top row (left to right): EnKI-MC (I); EnKI-MC (II); Chada et al. \cite{chada2019convergence}. \\
    Middle row: Nesterov Acceleration \cite{vernon2025nesterov}; Anderson et al. \cite{anderson2007adaptive}; Vanilla EnKI. \\
   Third row: True solution.}
   \label{EnKI_MC_IC}
\end{figure}
\begin{figure}[h!]
  \hspace{-0.8 cm}
    \begin{tabular}{ccc}
        \includegraphics[scale=0.3]{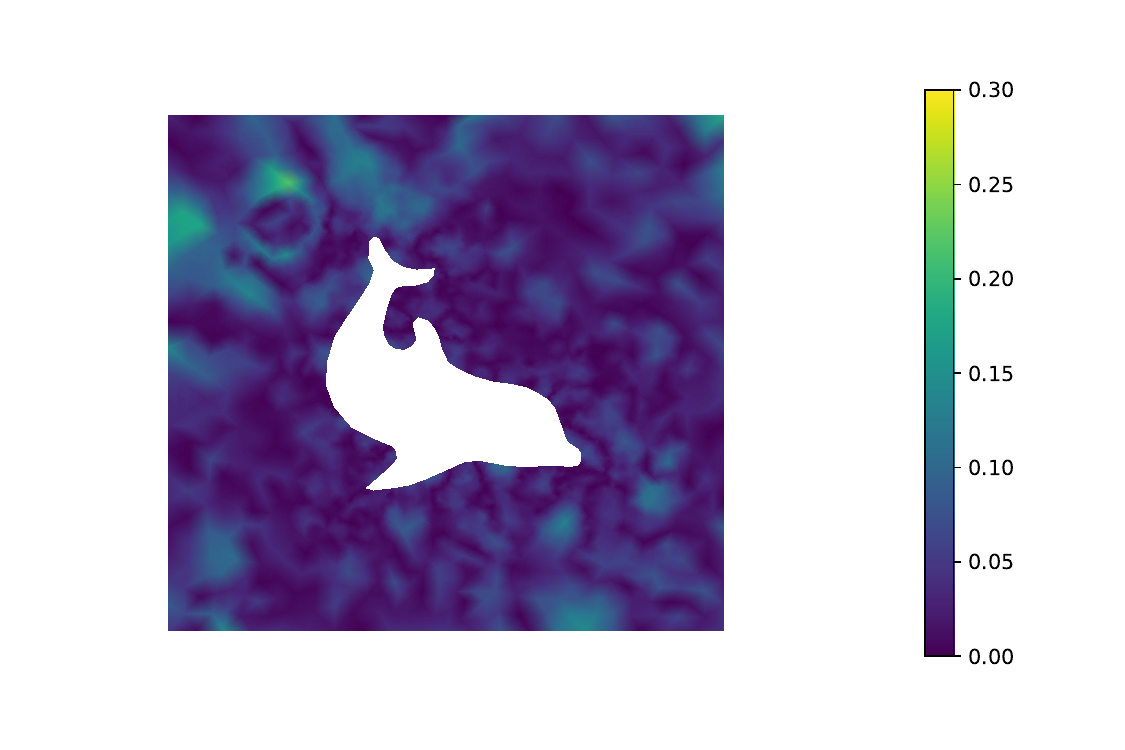} &
      \hspace{-1 cm}  \includegraphics[scale=0.3]{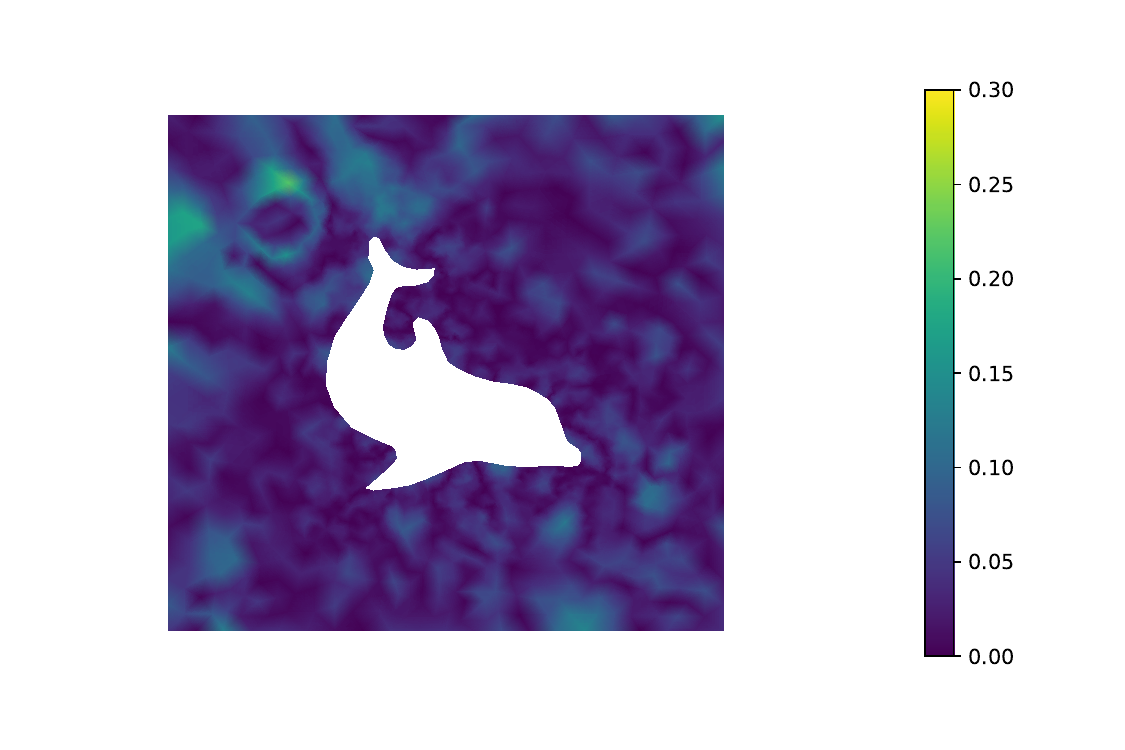} &
        \hspace{-1 cm}  \includegraphics[scale=0.3]{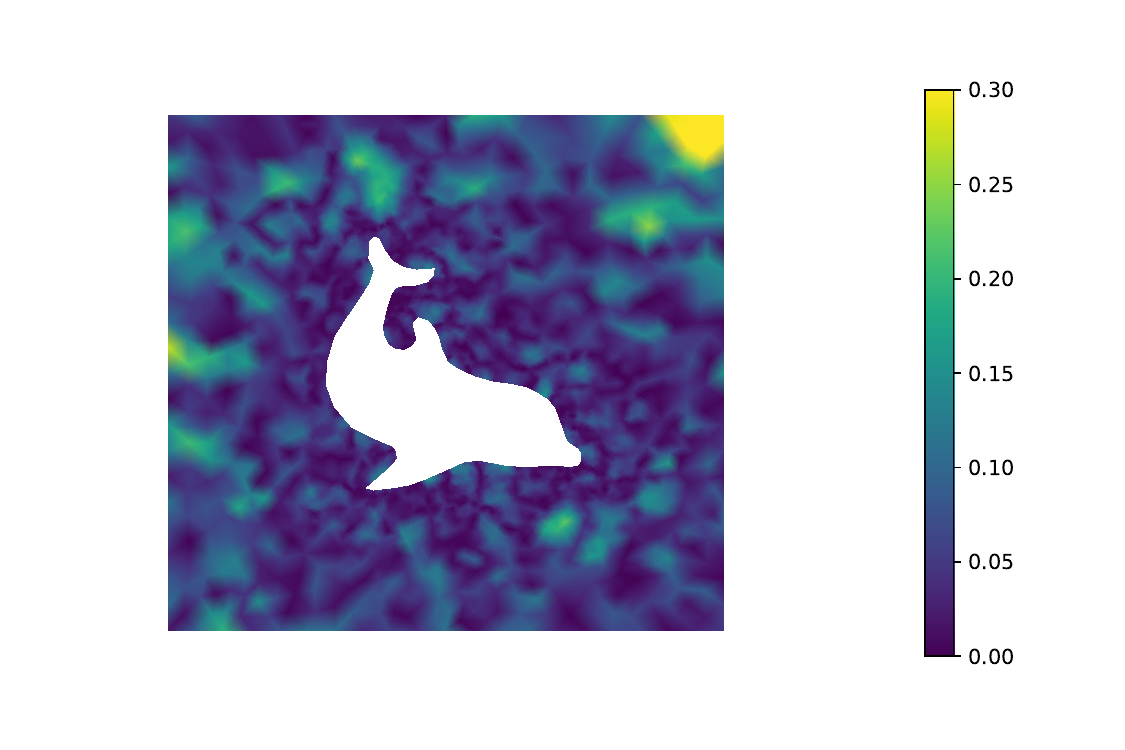} \\
    \end{tabular}

    \vspace{0.3cm} 

    \hspace{-0.8 cm}
    \begin{tabular}{ccc}
         \includegraphics[scale=0.3]{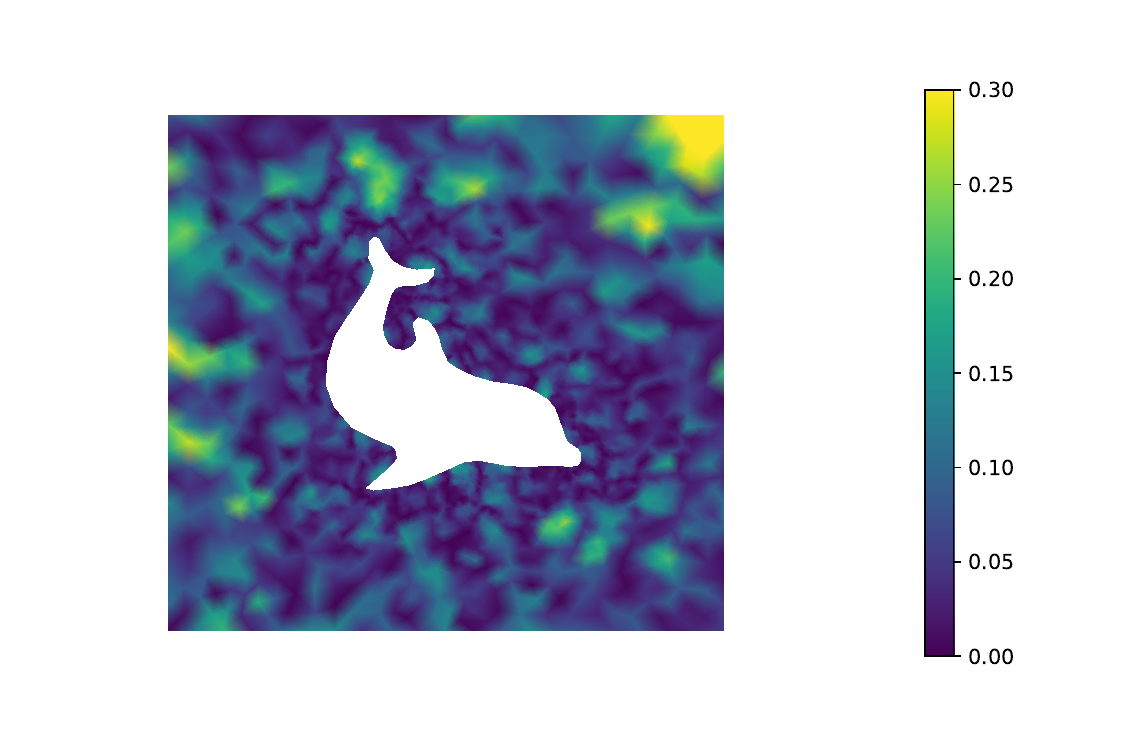}&
          \hspace{-1 cm}\includegraphics[scale=0.3]{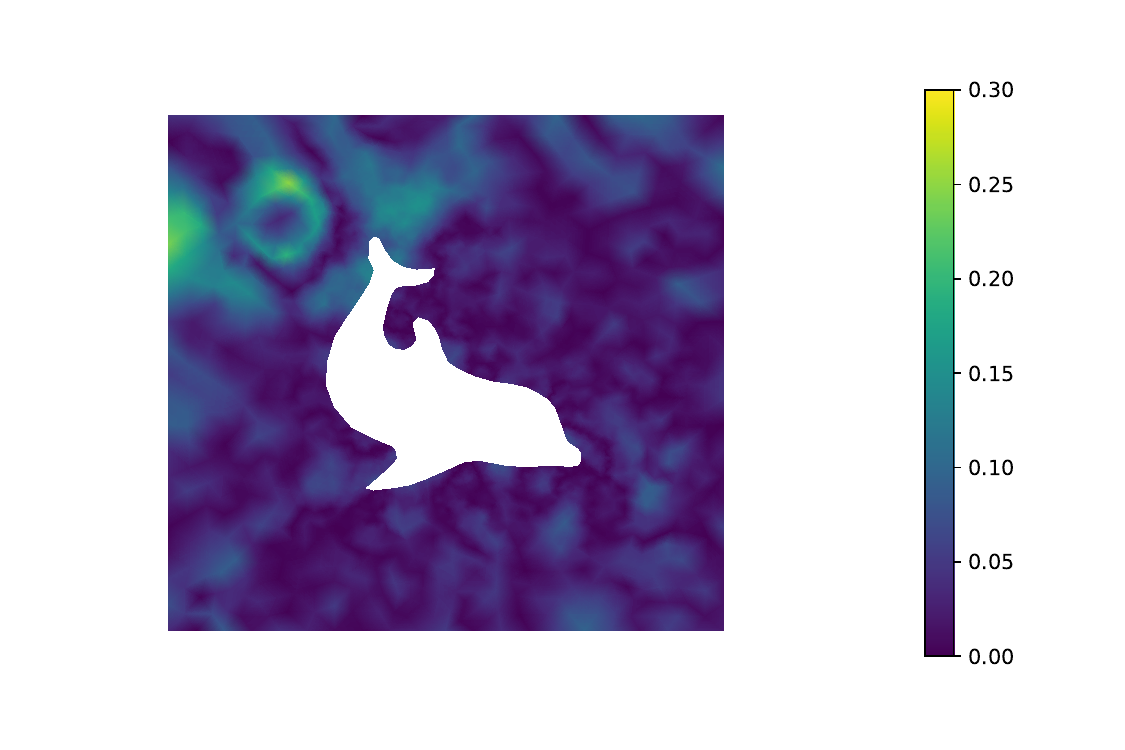} &
       \hspace{-1 cm}  \includegraphics[scale=0.3]{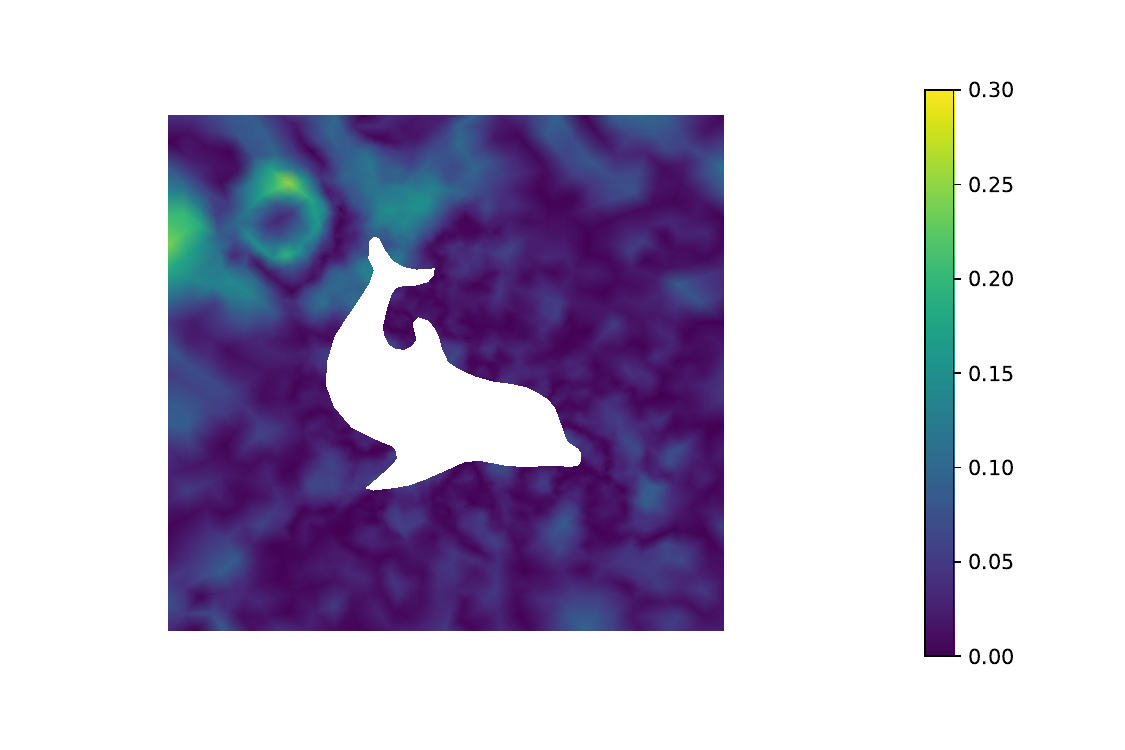} \\
    \end{tabular}

    \caption{Absolute error in solutions with respect to truth computed for different variants of EnKI for the Advection-Diffusion Problem. 
    Top row (left to right): EnKI-MC (I); EnKI-MC (II); Chada et al. \cite{chada2019convergence}. \\
    Middle row: Nesterov Acceleration \cite{vernon2025nesterov}; Anderson et al. \cite{anderson2007adaptive}; Vanilla EnKI.}
   \label{EnKI_MC_IC_error}
\end{figure}
  The inverse solution obtained by different variants of EnKI are shown in Figure \ref{EnKI_MC_IC}. Further, the absolute error  in computed solutions with respect to truth is also shown in Figure \ref{EnKI_MC_IC_error}. It is clear from Figure \ref{EnKI_MC_IC_error} that EnKI-MC (I) and EnKI-MC (II) produces the best solution out of all approaches.

\subsection{Initial condition inversion  in a Lorenz 96 model}
In this section we consider a nonlinear inverse problem involving the Lorenz 96 (L96) model \cite{chada2019convergence}.  The L96 model is described by the following system of coupled ordinary differential equations:
\begin{equation}
\begin{aligned}
      \frac{d v_k}{dt}=&v_{k-1}\LRp{v_{k+1}-v_{k-2}}-v_k+F,\quad k=1,\dots n\\
      &v_0=v_n,\ \ v_{n+1}=v_1,\ \ v_{-1}=v_{n-1},
\end{aligned}
\label{lorenz}
\end{equation}
where $v_k$ denotes the current state of the system at the $k-$th grid point. $F$ is a forcing constant with default value $8$. The initial condition to the system is denoted as $\vb(0)=\LRp{v_1(0),\dots, v_n(0)}^T$. The inverse problem is to recover the initial condition $\vb(0)$ given noise contaminated measurements $\{v_{k_r}(t)\}_{r=1}^m$, where $k_r$ is an integer between $1$ and $n$. That is we measure $m$ random components of $\vb(t)$. The measurement is made at time $t=0.3$.

In \eqref{lorenz}, we set $n=500$. The number of unique measurements locations on the grid is set as $315$. We also considered repeated measurements (with different noise) for some of the grid points giving rise to the final value of $m$ as $500$.
For generating a ground truth initial condition $\vb(0)$, we considered a  Gaussian process with mean$=2$ and an exponential sine squared kernel function  i.e $\vb(0)\sim \GM{{\bf{2}}}{\sC}$, where $\sC$ is the covariance matrix associated with the exponential sine squared kernel function with length scale=0.5,  periodicity=20. A small zero mean Gaussian noise (with standard deviation $0.01$) is added to the generated $\vb(0)$ to introduce minor fluctuations, as the L96 system is sensitive to perturbations and the inverse problem of recovering $\vb(0)$ becomes challenging.
The ensembles are drawn from $\GM{{\bf{2}}}{\sC}$ and we set the number of ensembles $N$ to be $500$. To solve the L96 model \eqref{lorenz} we
use a fourth order Runge–Kutta method with step size $h_{L96}=0.01$.
\subsubsection{Convergence acceleration with  EnKI-MC (I) and EnKI-MC (II)}
Figure \ref{lorentz} (left) shows the convergence of relative error with respect to truth where we observe that EnKI-MC (I) and EnKI-MC (II) converges the fastest out of all the approaches.  Figure \ref{lorentz} (right) shows a similar trend for the convergence of the loss $\sJ_{EnKI}$ defined in \eqref{enki_loss_approx}.  
\begin{figure}[h!]
\centering
\begin{minipage}{.5\textwidth}
 \includegraphics[scale=0.38]{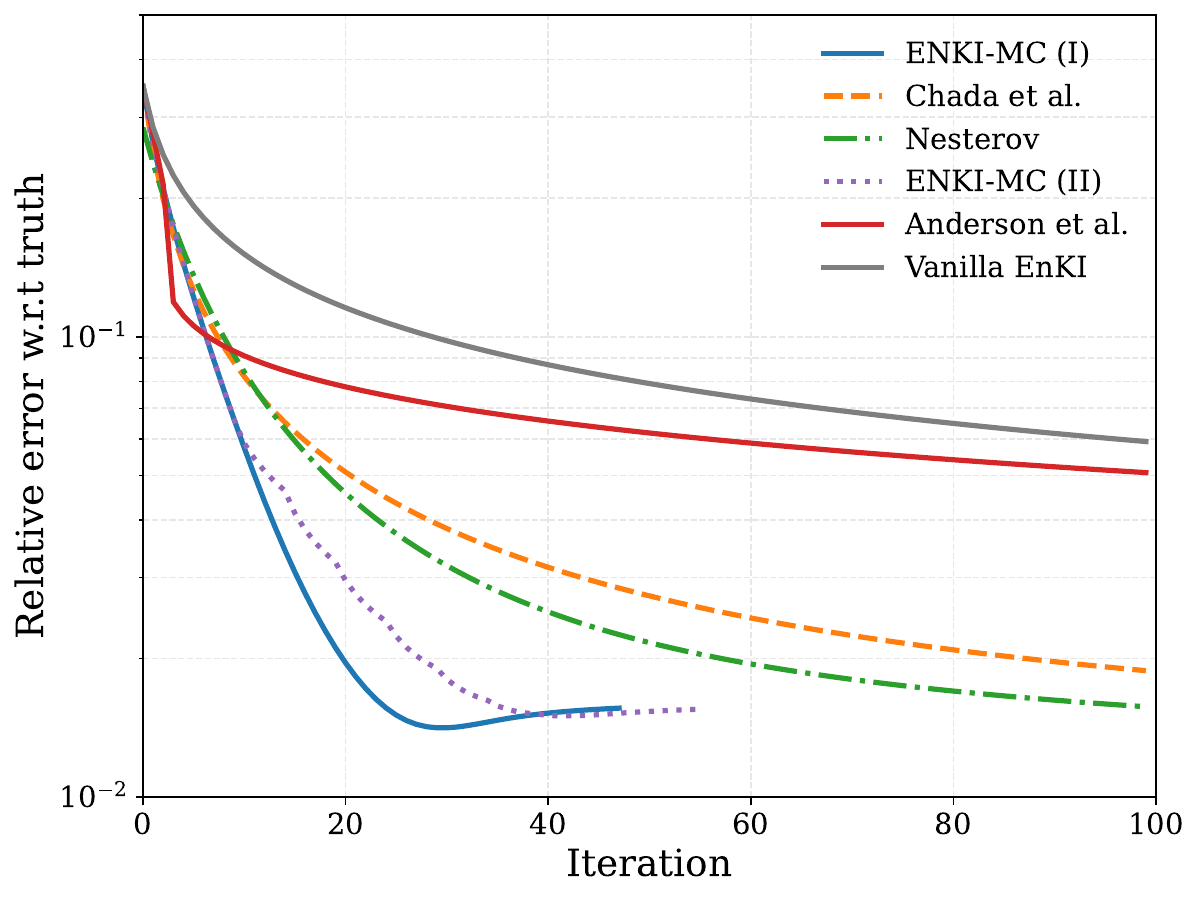}
\end{minipage}%
\begin{minipage}{.5\textwidth}
\hspace{-0.1 cm}
 \includegraphics[scale=0.38]{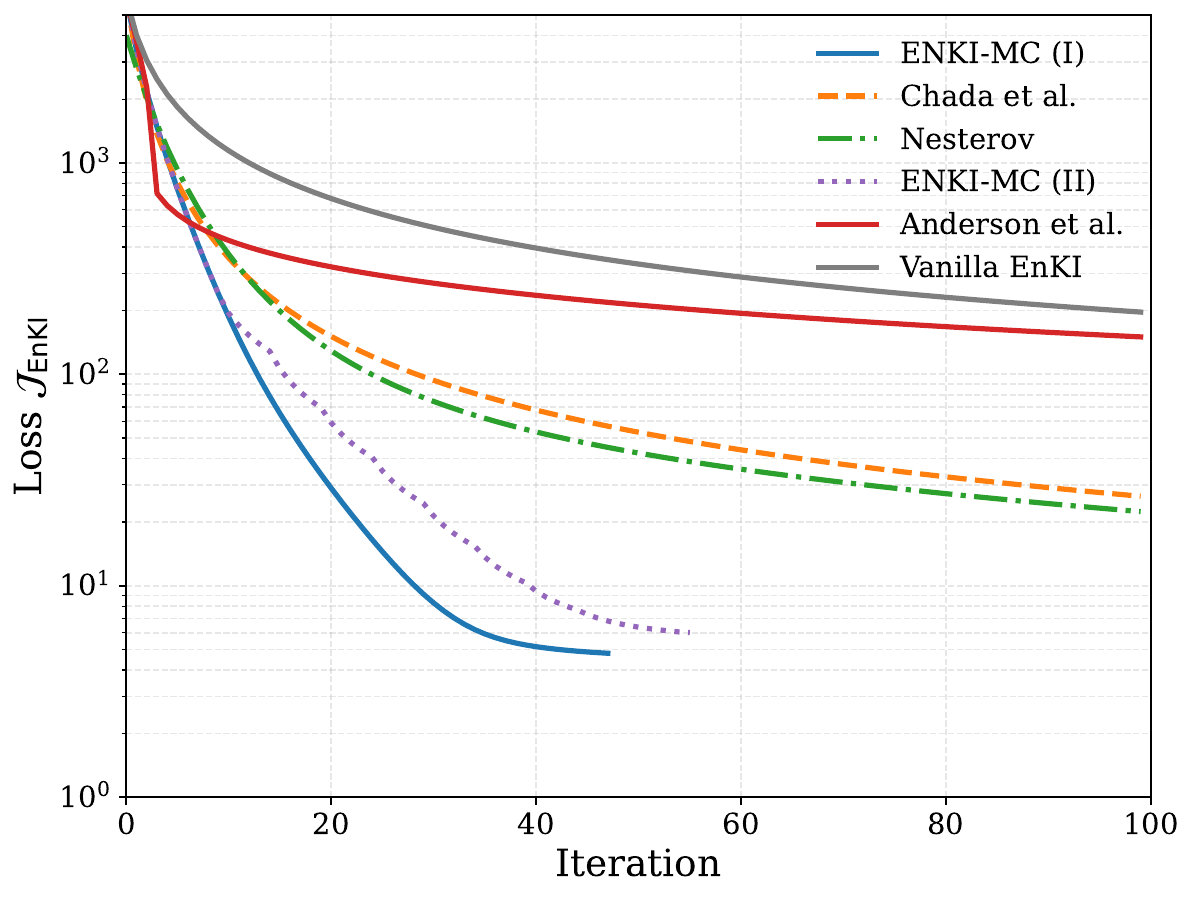}
\end{minipage}
\caption{Performance of different variants of EnKI for the  Lorenz 96 model. Left to right: Convergence of relative error w.r.t truth for different methods;  Convergence of loss defined in \eqref{enki_loss_approx}.}
\label{lorentz}
\end{figure}
\begin{figure}[h!]
\centering
 \includegraphics[scale=0.5]{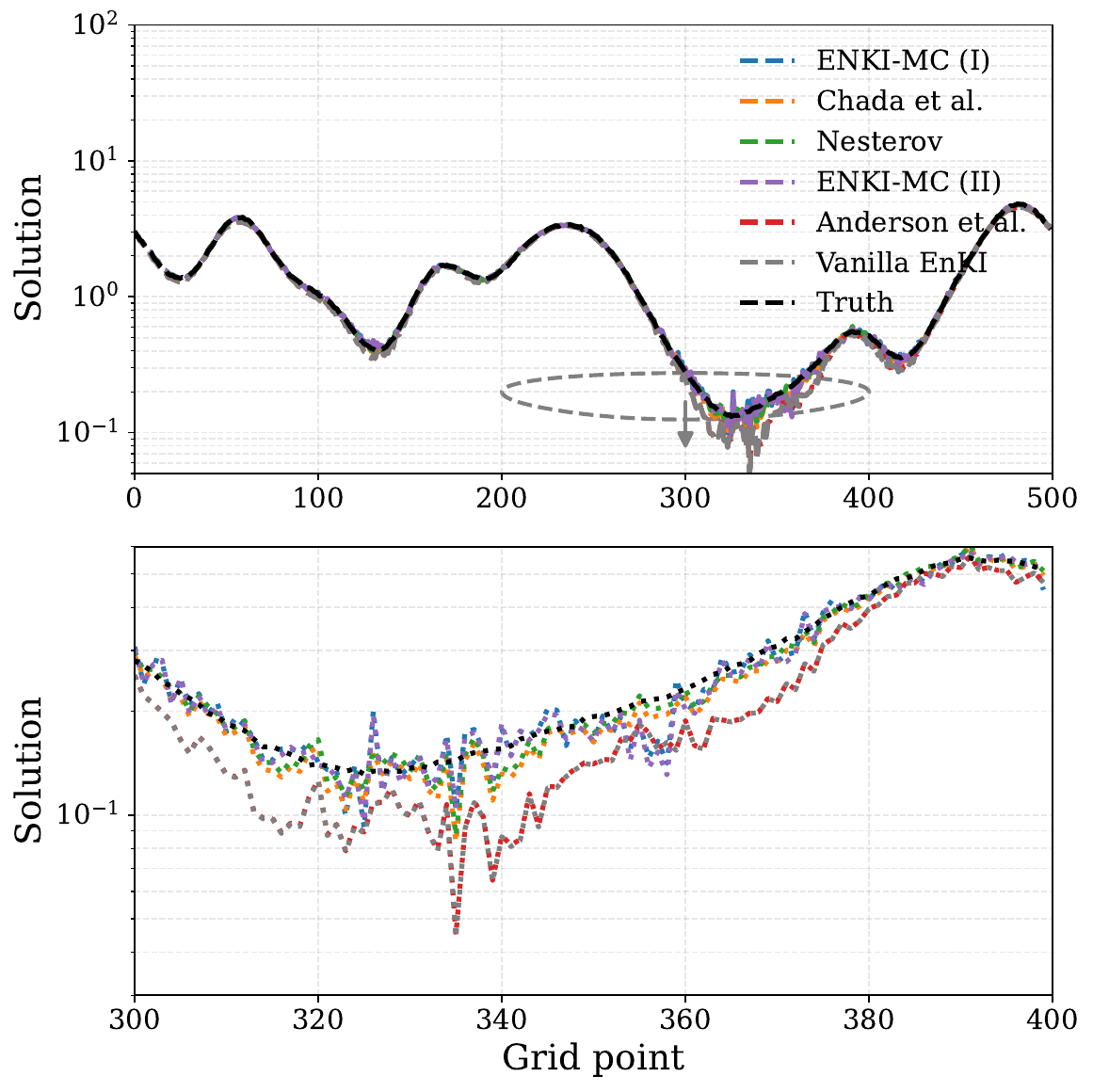}
\caption{Inverse solution achieved by different EnKI variants for Lorenz 96 model. }
\label{lorent}
\end{figure}
Table \ref{num_result4} shows that EnKI-MC (I) is the best performing method in terms of convergence time providing a speedup of about $2$ times  over Chada et al. \cite{chada2019convergence} (the next best method in terms of convergence speed). One interesting observation is that even though  Nesterov Acceleration \cite{vernon2025nesterov} produced  relative error w.r.t truth comparable to 
EnKI-MC (I)  and EnKI-MC (II), the corresponding loss  $\sJ_{EnKI}$ achieved by  Nesterov Acceleration \cite{vernon2025nesterov}  is considerably higher than our approach as evident from Figure \ref{lorentz}. Therefore, in this case our approach produced a solution which minimized the data misfit function  \eqref{enki_loss_approx} much more effectively than other methods. Further theoretical investigation is necessary to understand why this occurs. 

The inverse solution obtained by different variants of EnKI are shown in Figure \ref{lorent}.  From Figure \ref{lorent} we see that EnKI-MC (I), EnKI-MC (II), Chada et al. \cite{chada2019convergence} and Nesterov Acceleration \cite{vernon2025nesterov}  produced the best quality solutions out of all approaches.

\begin{table}[h!]
\caption{Performance comparison of different Ensemble Kalman Inversion Algorithms for the Lorenz 96 model.}
\centering
\begin{tabular}{|c | c | c |c|c|c|}
        \hline
   Method  & Time to reach  & No of & No of&   Relative error   \\ 
      & ($\epsilon$ tolerance) & iterations & $\sG$ evaluations & w.r.t ground truth   \\  \hline
   EnKI-MC (I)  & $4$ min& 47 & 23500 &0.0156 \\ \hline
         EnKI-MC (II)  &$8$ min&55& 27500 &0.0155 \\ \hline
Chada et al. \cite{chada2019convergence} &$10$ min & 113 &56500 & 0.0177\\  \hline
Vanilla EnKI (\eqref{non_lin_enki}) &$16$ min  & 202  & 101000 & 0.0438  \\  \hline
Nesterov Acceleration \cite{vernon2025nesterov}  &$9$ min& 100 & 50000 & 0.0157\\  \hline
Anderson et al. \cite{anderson2007adaptive} &$14$ min &159 & 79500 &0.0436   \\  \hline
\end{tabular} 
 \label{num_result4}
\end{table}

 \subsection{Nonlinear parameter inversion in a steady-state heat equation}

In this section, we consider a 2D nonlinear inverse problem. 
The heat equation is a two-dimensional elliptic PDE of the form:
\begin{equation}
  \grad \cdot \LRp{e^u \grad {p}}= f \quad {in }\quad  \Omega:= (0,\ 1)^2, \quad \quad {p}=0 \quad {in } \quad \partial \Omega.    
  \label{heat_cont}
\end{equation}
where $f$ is the forcing function, $u(x_1,\ x_2)$ is the conductivity field (parameter field), and $p(x_1,\ x_2)$ is the temperature field.  \eqref{heat_cont} is discretized with finite difference method (5-point stencil) resulting in 
a discrete parameter field $\bu$ of dimension $n=2304$. Our aim is to infer $\bu$ given measurement of temperature $p(x_1,\ x_2)$ at $500$ random grid  points. Therefore, in this case we have $m=500$. We also choose $f=1$ in this experiment. 
For demonstration we draw a ground truth parameter field $\ub^\dagger$ from  a  zero mean Gaussian prior $\GM{{\bf{0}}}{\sC}$ with the covariance $\sC$  defined as $\sC=(\Delta)^{-2}$, where 
homogeneous Dirichlet boundary conditions is used to define the inverse of $\Delta$. 
The ensembles for EnKI are drawn from $\GM{{\bf{0}}}{\sC}$ and we set the number of ensembles $N$ to be $50$.

\subsubsection{Convergence acceleration with   EnKI-MC (I) and EnKI-MC (II)}
Similar convergence acceleration observed in previous examples is seen here as well. In particular,  Figure \ref{ENKF_100_samples_poisson_relative} (left) shows the convergence of relative error with respect to truth where we observe that EnKI-MC (I) and EnKI-MC (II) converges the fastest out of all the approaches.  Figure \ref{ENKF_100_samples_poisson_relative} (right) shows a similar trend for the convergence of the loss $\sJ_{EnKI}$ defined in \eqref{enki_loss_approx}.  
Table \ref{num_result3} shows that EnKI-MC (I) is the best performing method in terms of convergence time providing a speedup of about $4.5$ times  over Chada et al. \cite{chada2019convergence} (the next best method in terms of convergence speed). We also observe that our approach has the minimum number of function evaluations for convergence leading to tremendous speedup over other approaches. Table \ref{num_result3} also shows that EnKI-MC (II) achieves the best relative error w.r.t truth out of all approaches. 

The inverse solution obtained by different variants of EnKI are shown in Figure \ref{fig:EnKI_combined}. Further, the absolute error  in computed solutions with respect to truth is also shown in Figure \ref{fig:heat_error}. It is clear from Figure \ref{fig:heat_error} that EnKI-MC (I) and EnKI-MC (II) produced the best solution out of all approaches.
 \begin{figure}[h!]
\centering
\begin{minipage}{.5\textwidth}
 \includegraphics[scale=0.38]{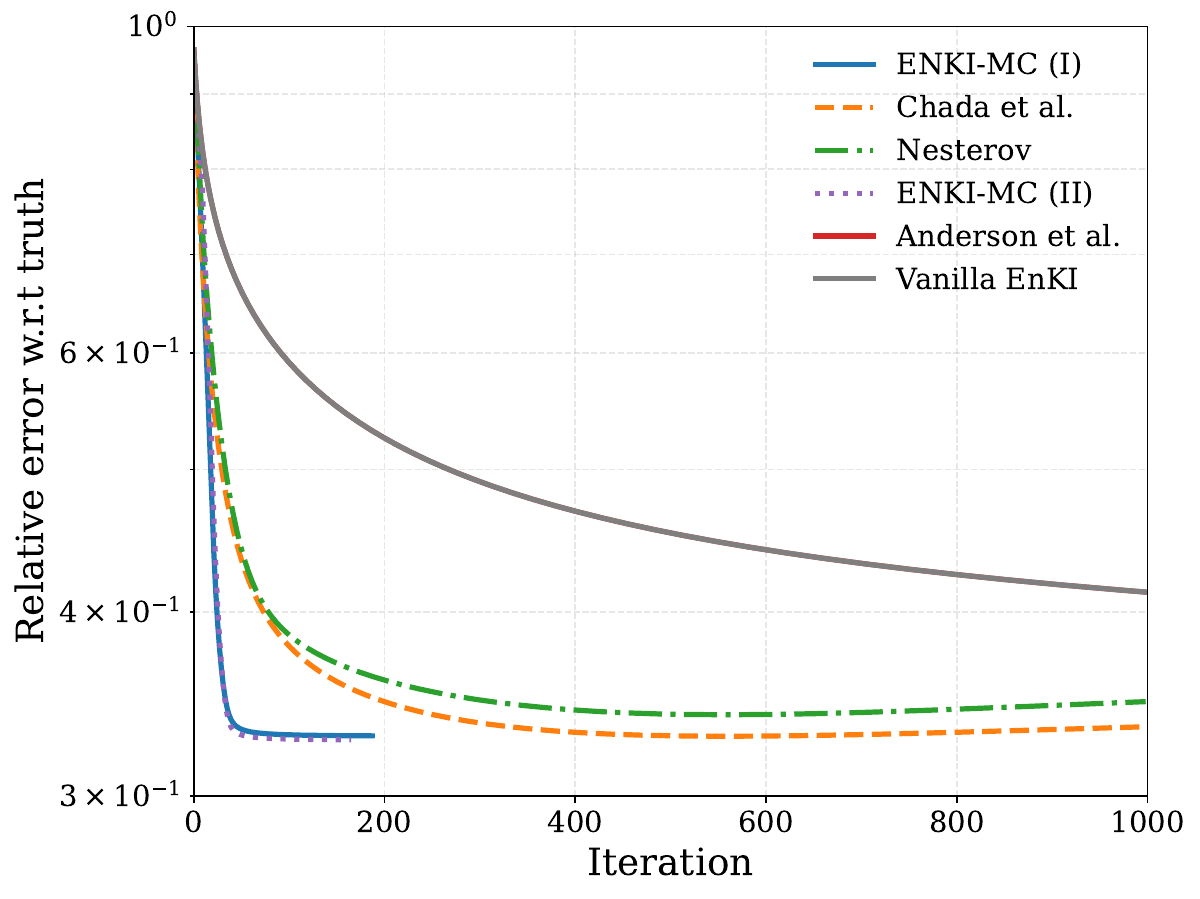}
\end{minipage}%
\begin{minipage}{.5\textwidth}
\hspace{-0.1 cm}
 \includegraphics[scale=0.38]{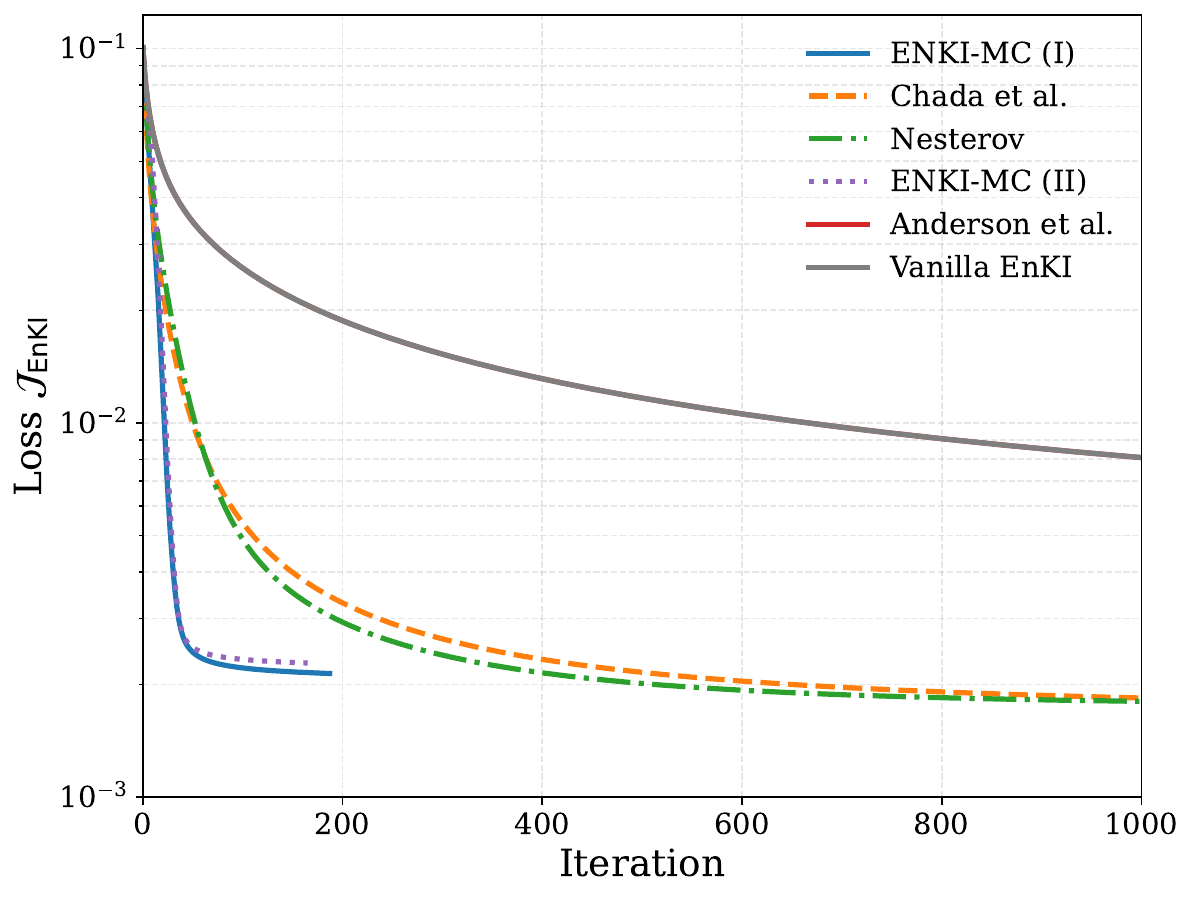}
\end{minipage}
\caption{Performance of different variants of EnKI for the steady-state heat equation. Left to right: Convergence of relative error w.r.t truth for different methods;  Convergence of loss defined in \eqref{enki_loss_approx}.}
\label{ENKF_100_samples_poisson_relative}
\end{figure}

\begin{figure}[h!]
  \hspace{-0.8 cm}
    \begin{tabular}{ccc}
       \hspace{0.3 cm} \includegraphics[scale=0.20]{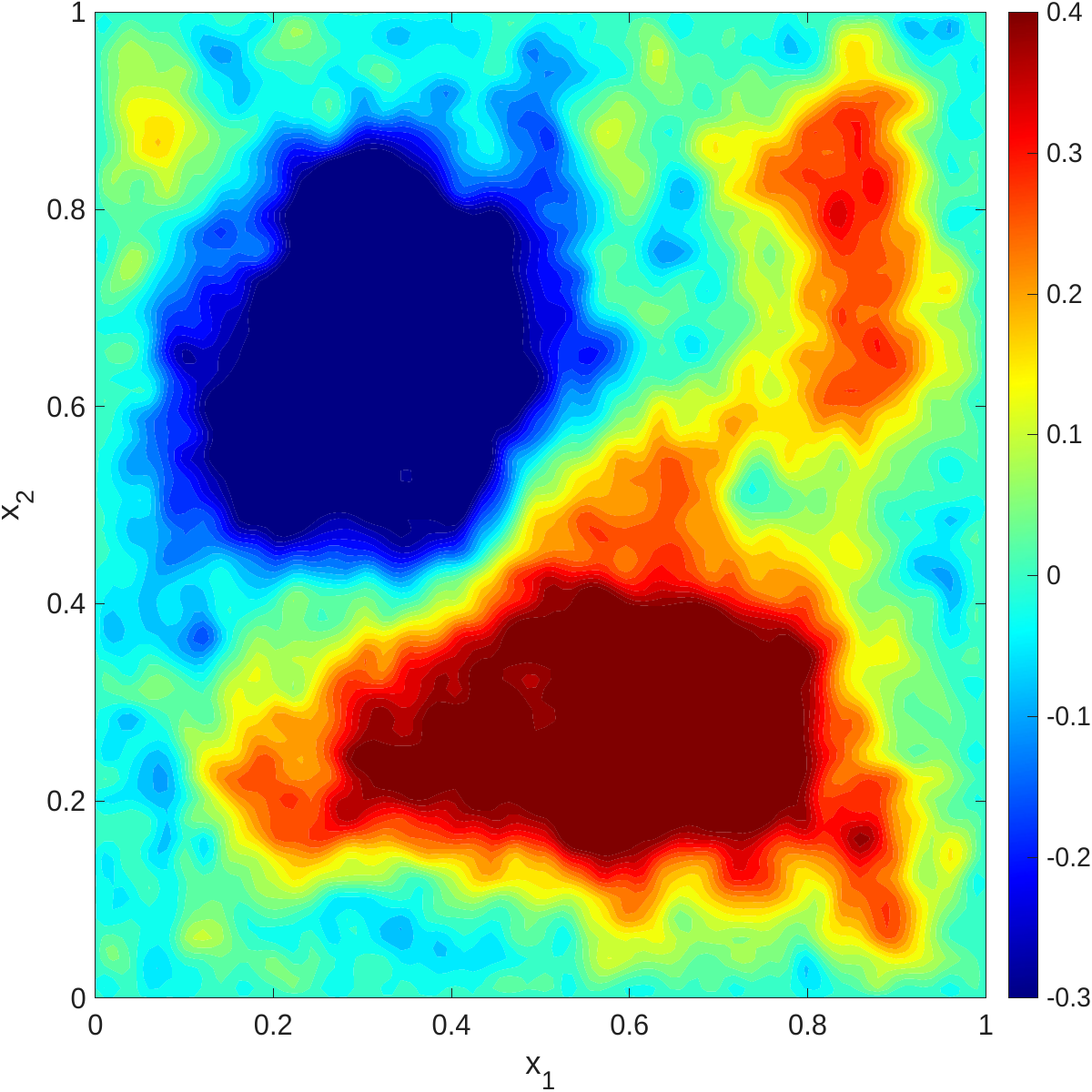} &
       \hspace{0.3 cm}  \includegraphics[scale=0.20]{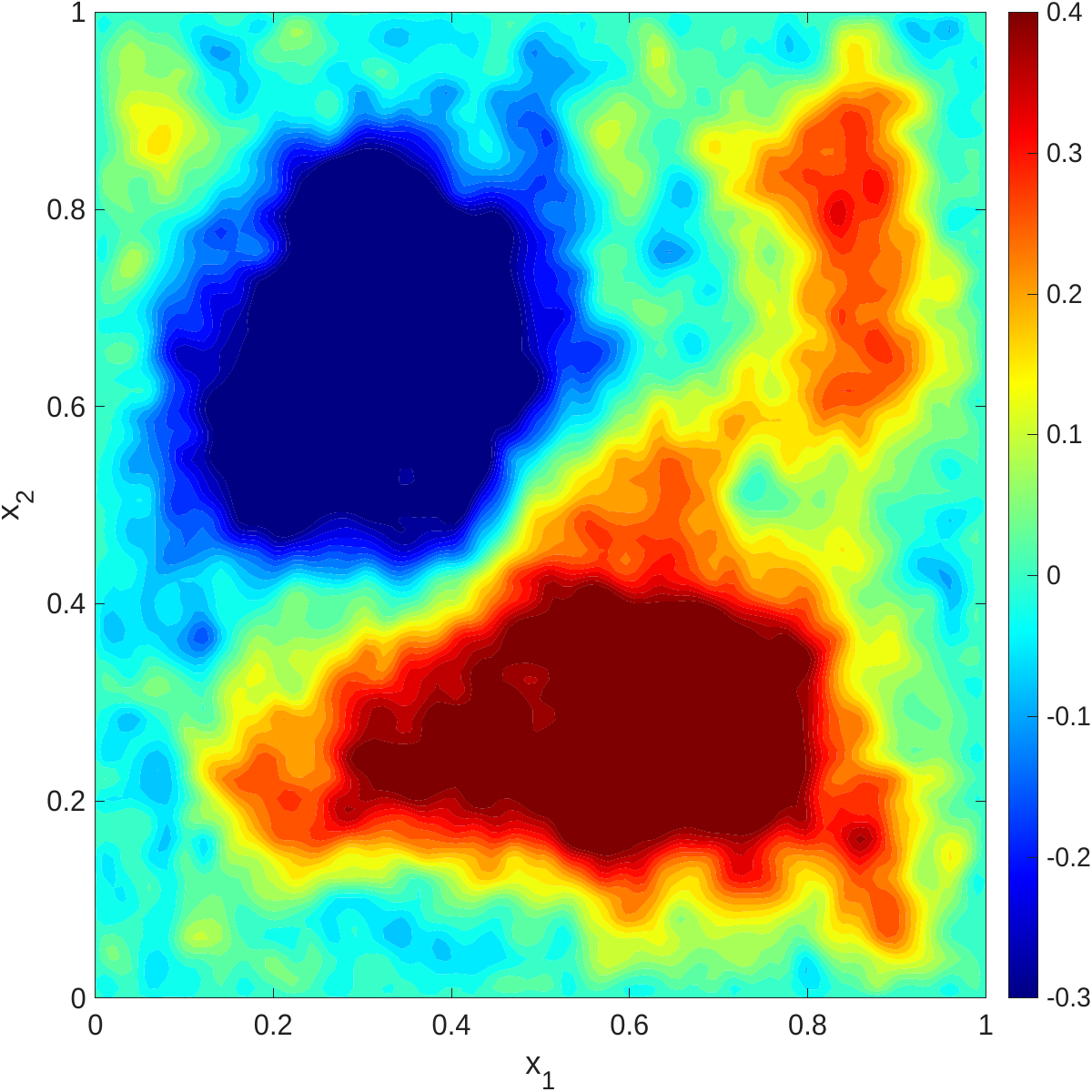} &
         \hspace{0.3 cm}\includegraphics[scale=0.20]{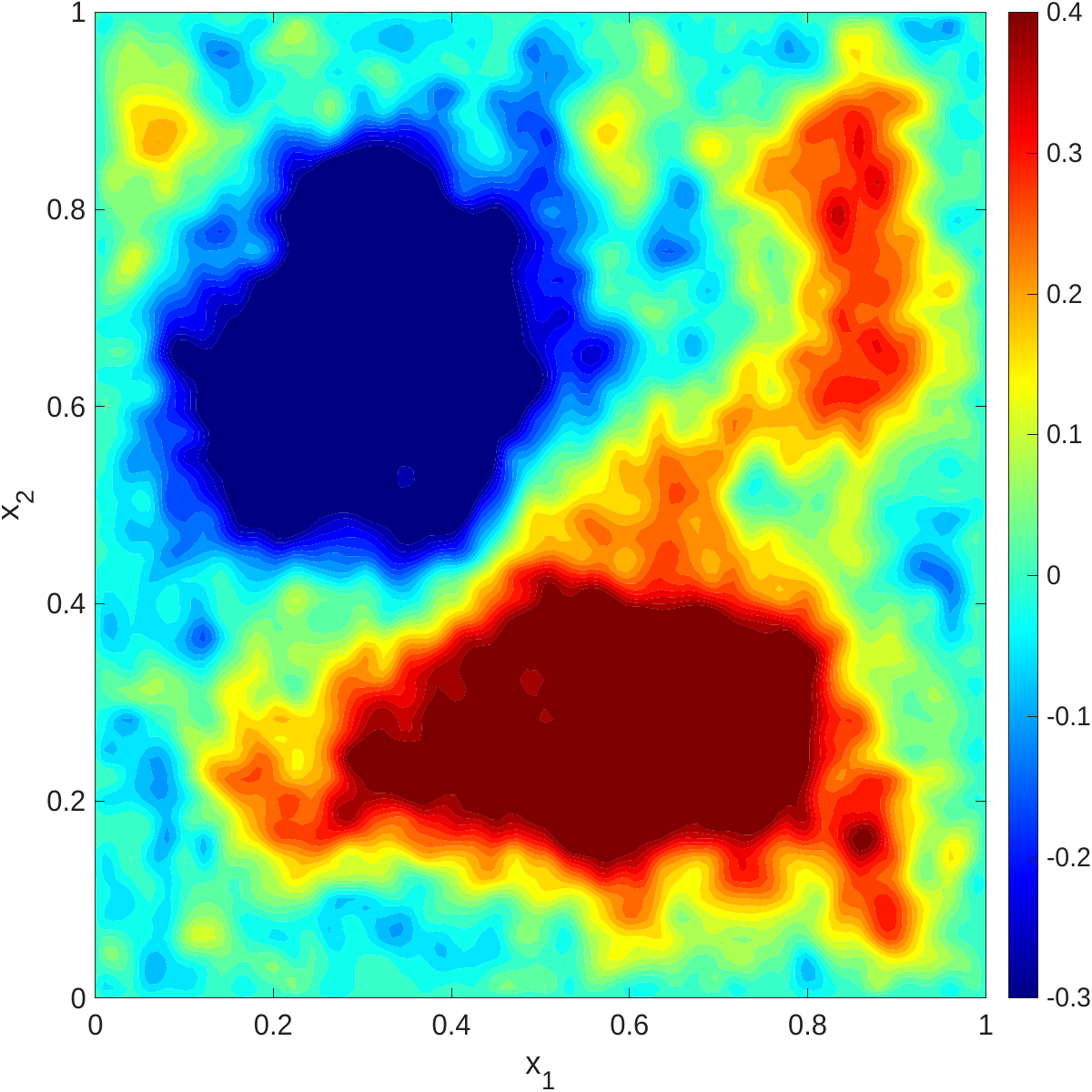} \\
    \end{tabular}

    \vspace{0.3cm} 

    \hspace{-0.8 cm}
    \begin{tabular}{ccc}
      \hspace{0.3 cm}  \includegraphics[scale=0.20]{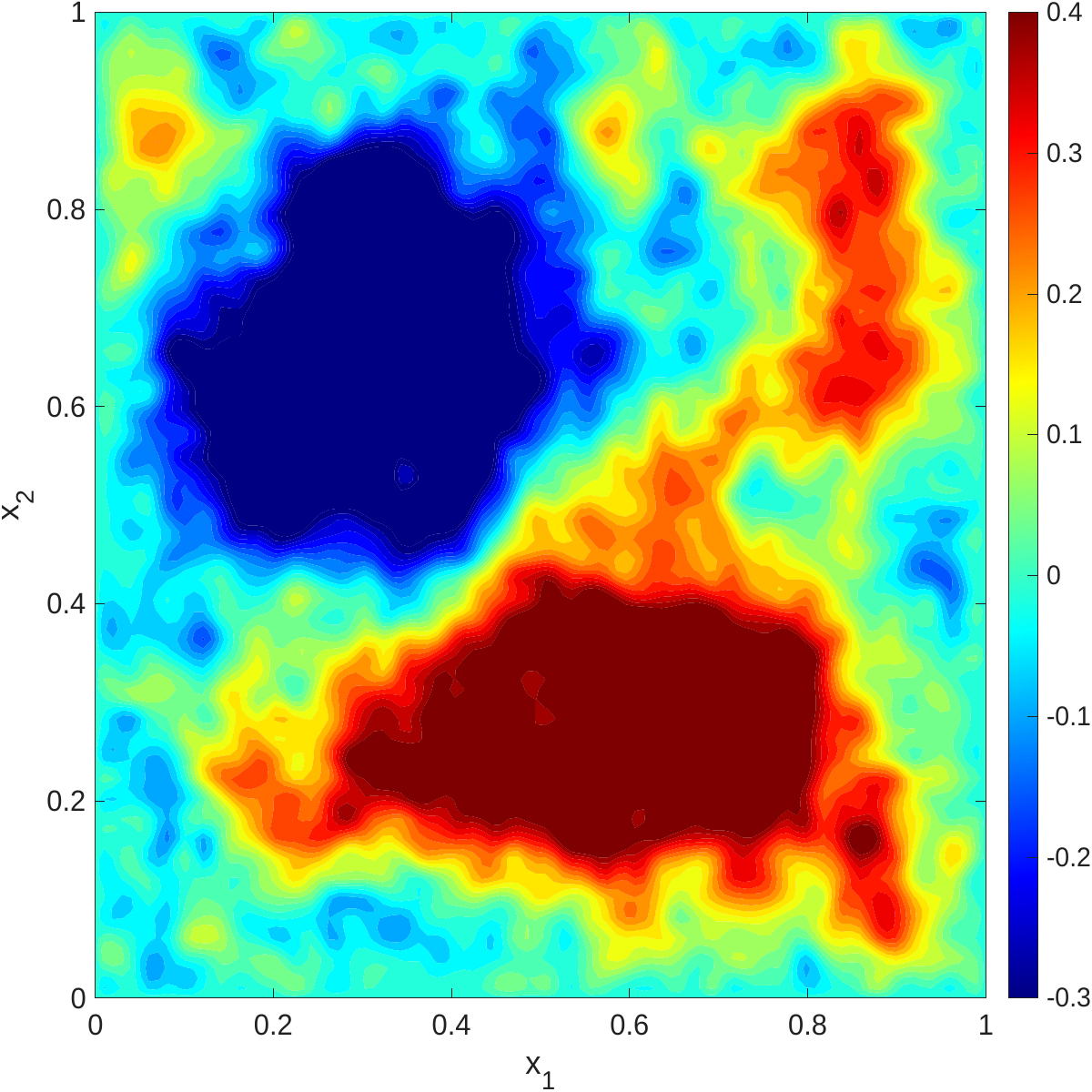} &
       \hspace{0.3 cm} \includegraphics[scale=0.20]{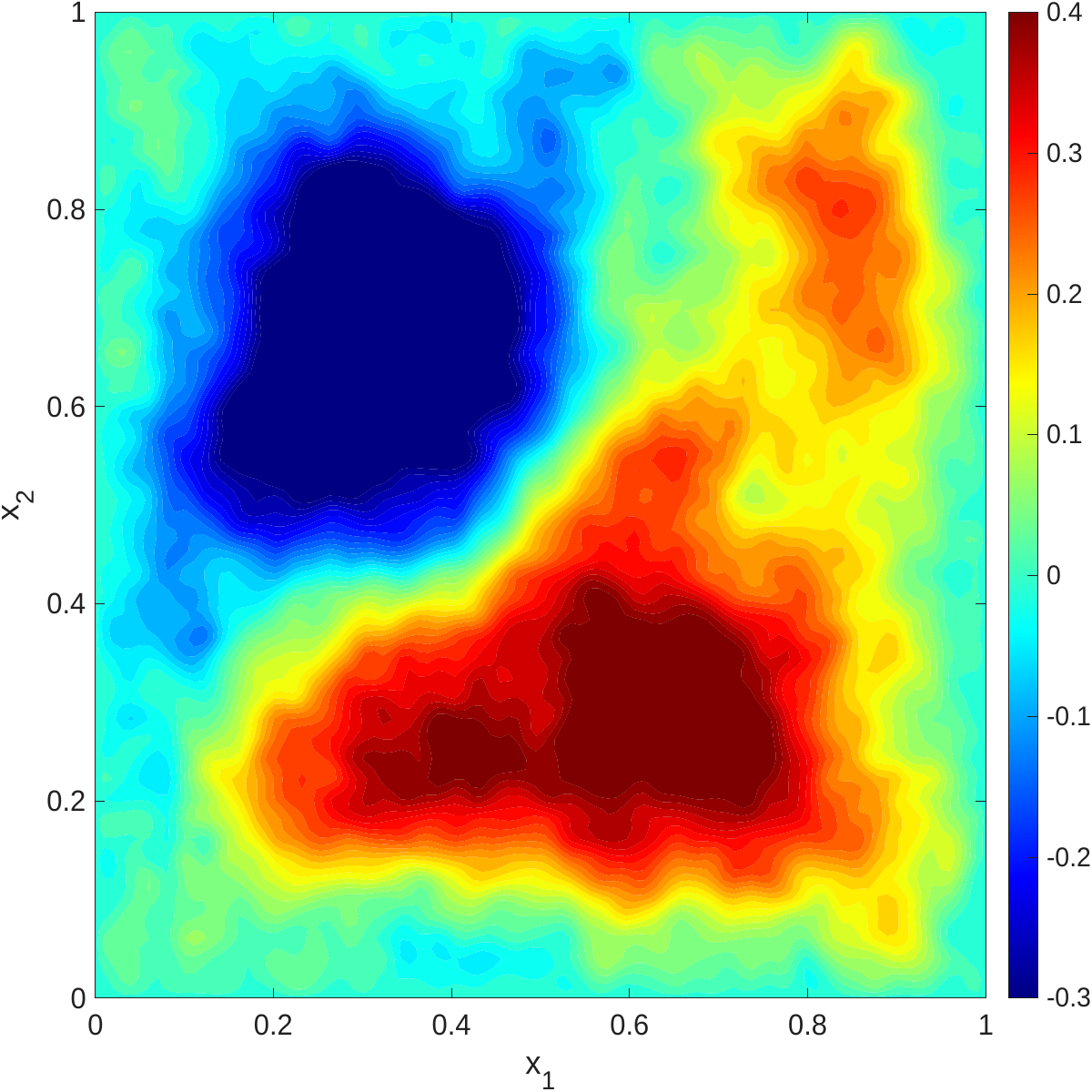} &
     \hspace{0.3 cm}   \includegraphics[scale=0.20]{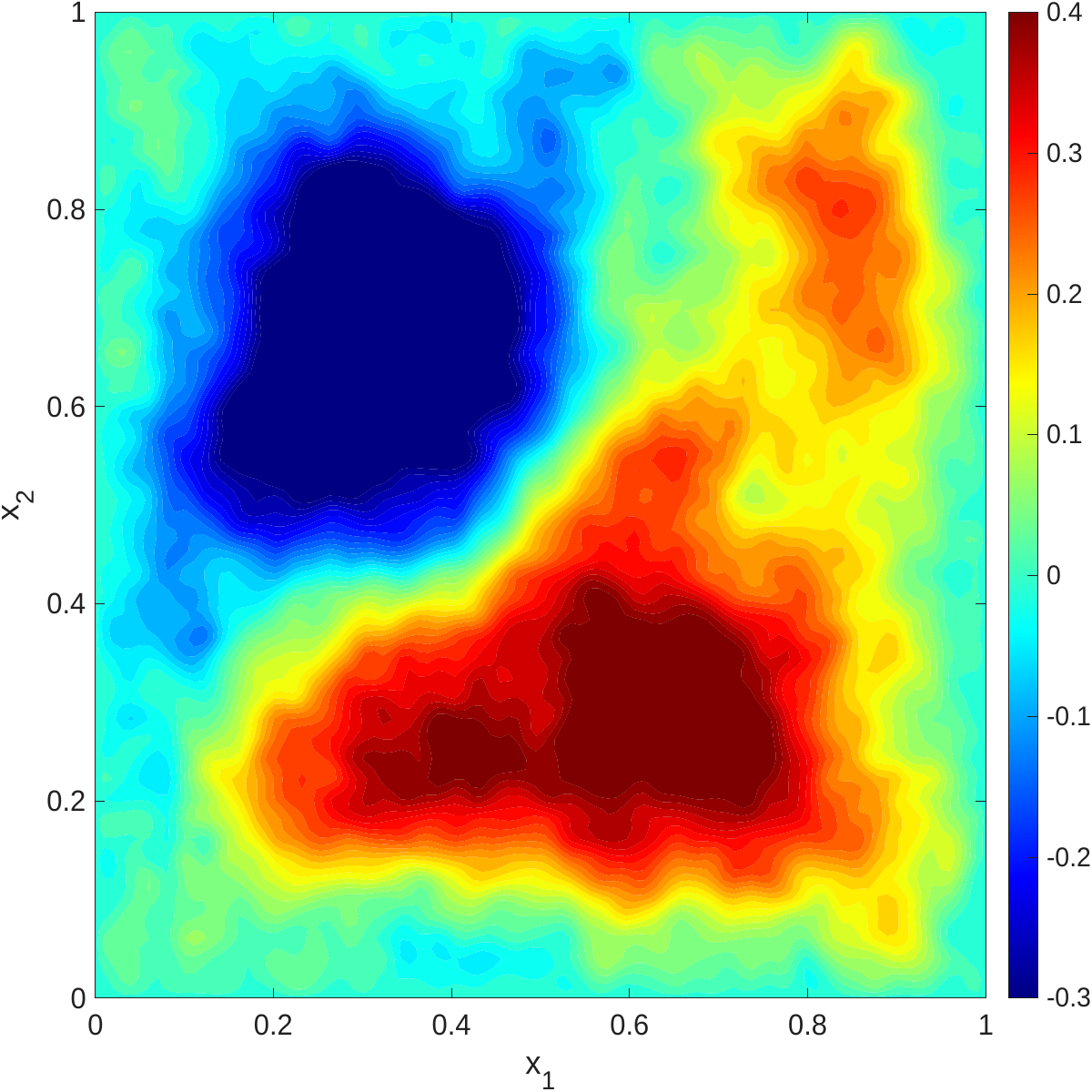} \\
    \end{tabular}

    \vspace{0.3cm} 

    \hspace{4.5 cm}
    \begin{tabular}{c}
        \includegraphics[scale=0.20]{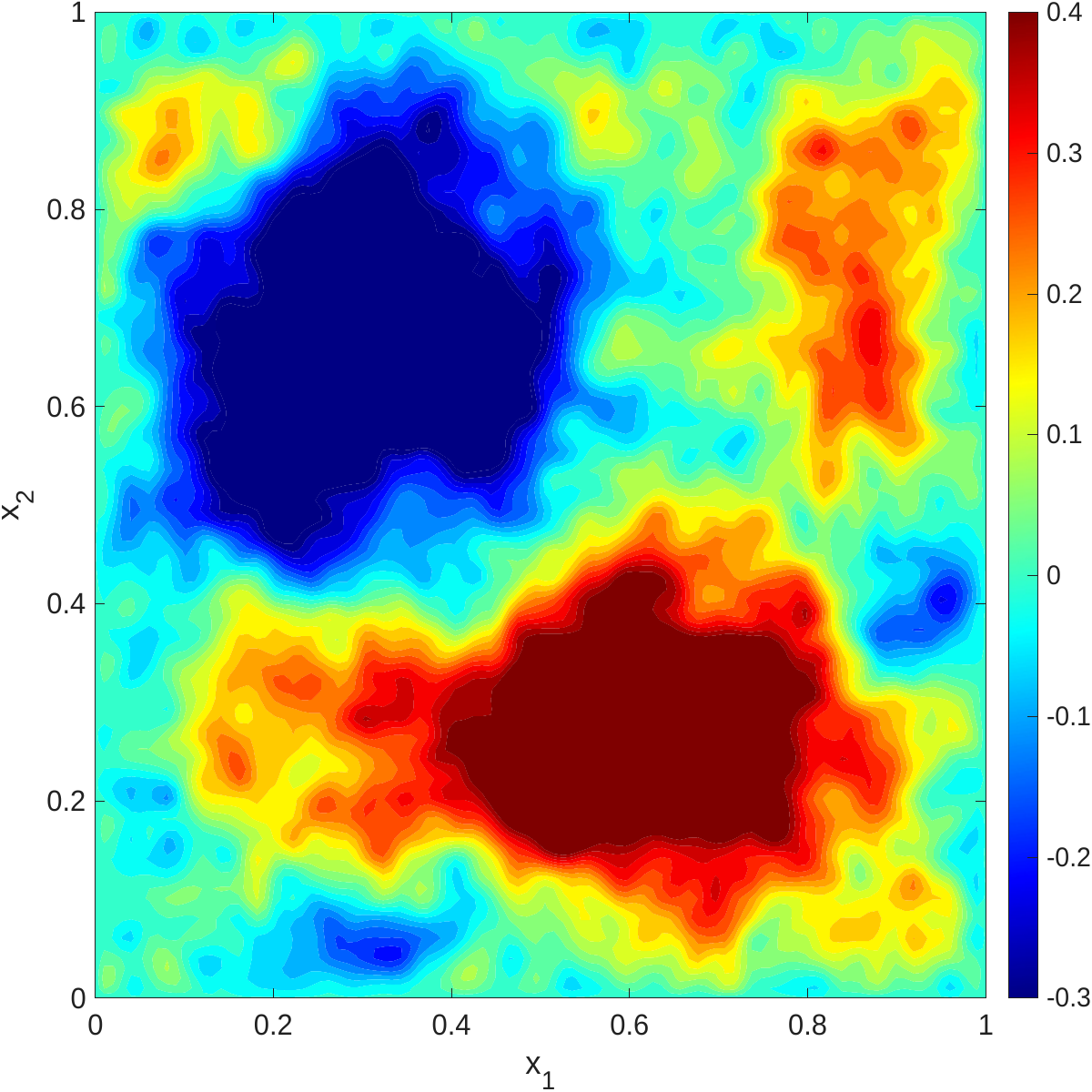} \\
    \end{tabular}

    \caption{Solutions computed using different variants of EnKI for the steady-state heat equation.
    Top row (left to right): EnKI-MC (I); EnKI-MC (II); Chada et al. \cite{chada2019convergence}. \\
    Middle row: Nesterov Acceleration \cite{vernon2025nesterov}; Anderson et al. \cite{anderson2007adaptive}; Vanilla EnKI. \\
   Third row: True solution.}
    \label{fig:EnKI_combined}
\end{figure}
  
\begin{figure}[h!]
  \hspace{-0.8 cm}
    \begin{tabular}{ccc}
        \includegraphics[scale=0.24]{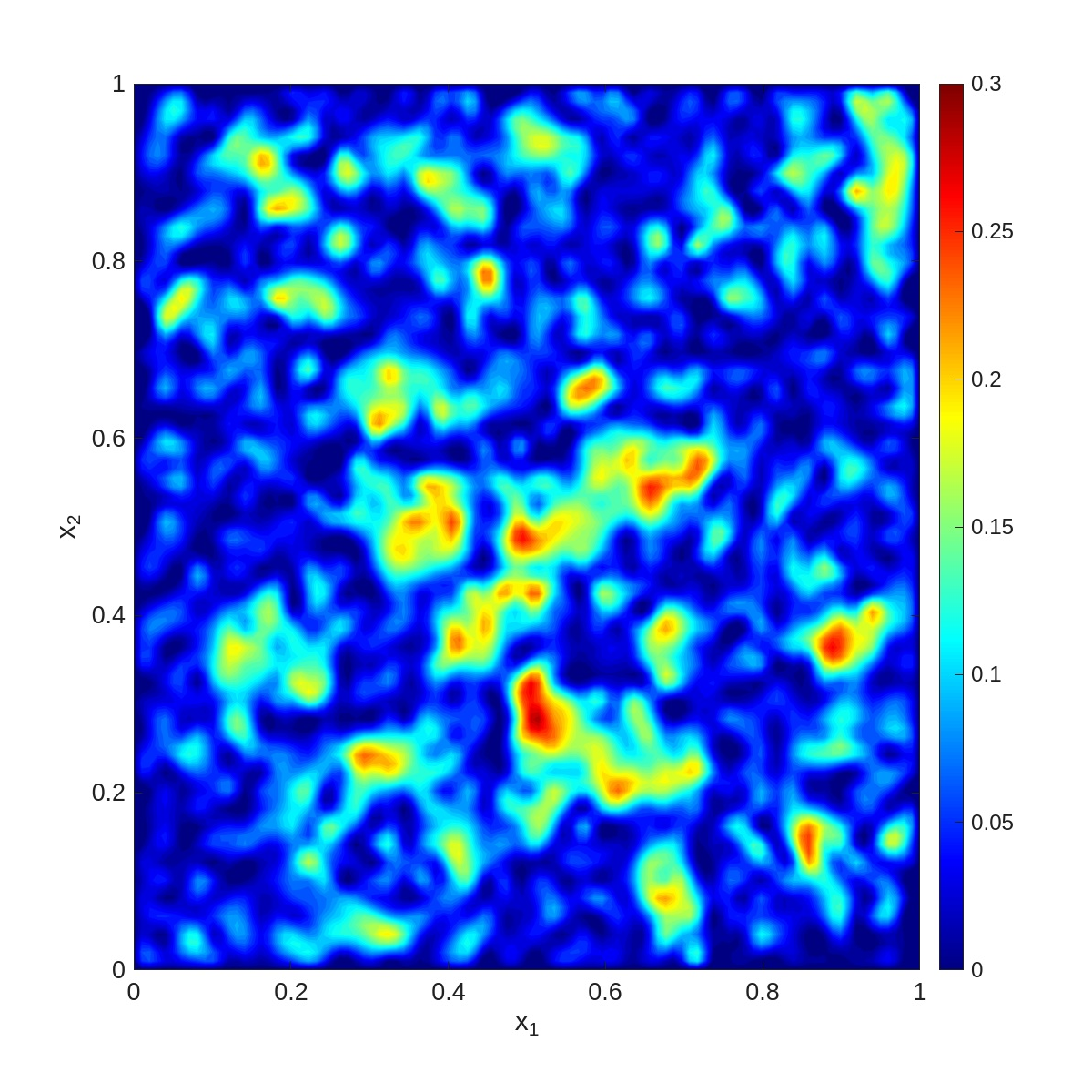} &
        \includegraphics[scale=0.22]{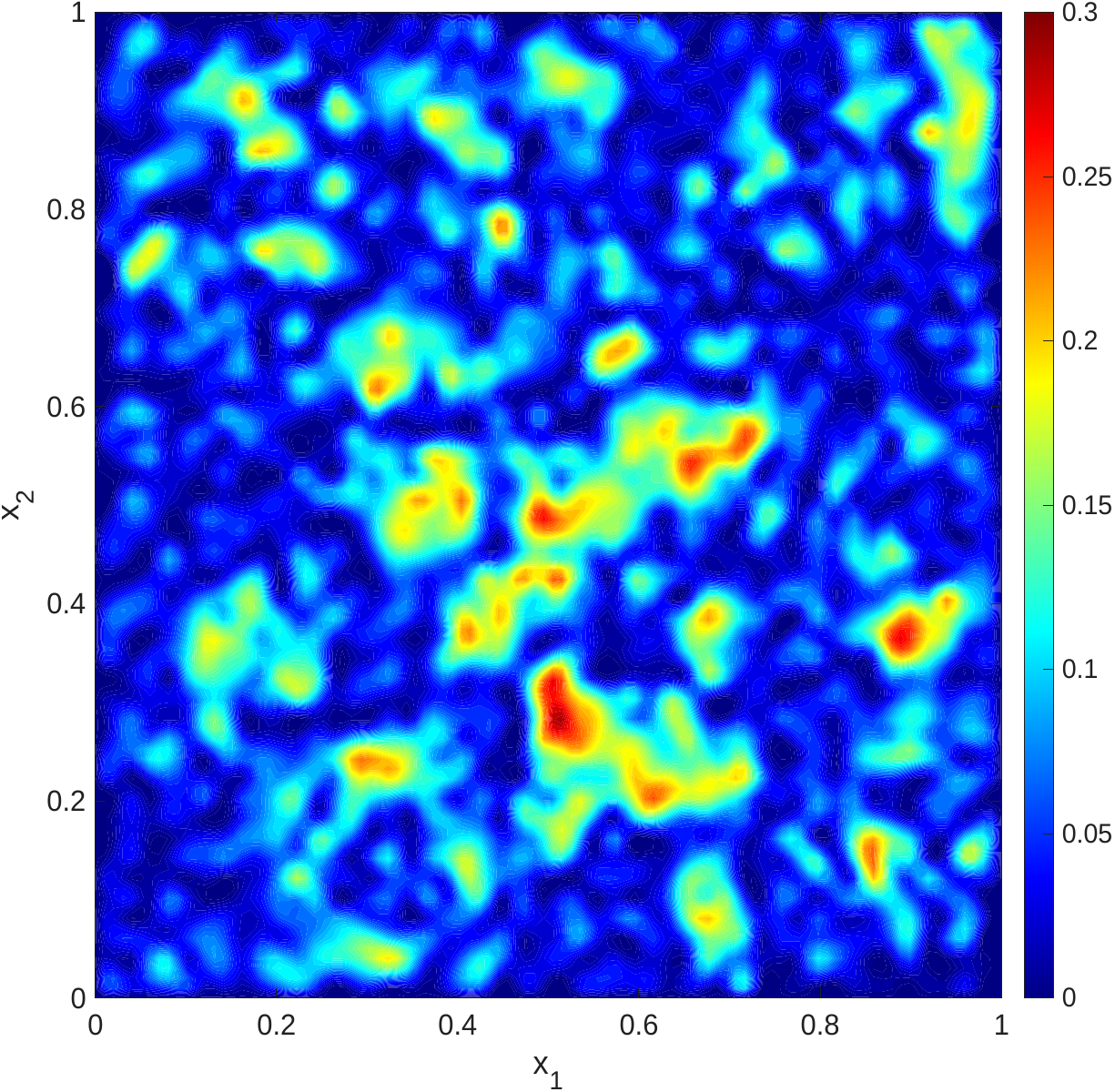} &
        \includegraphics[scale=0.24]{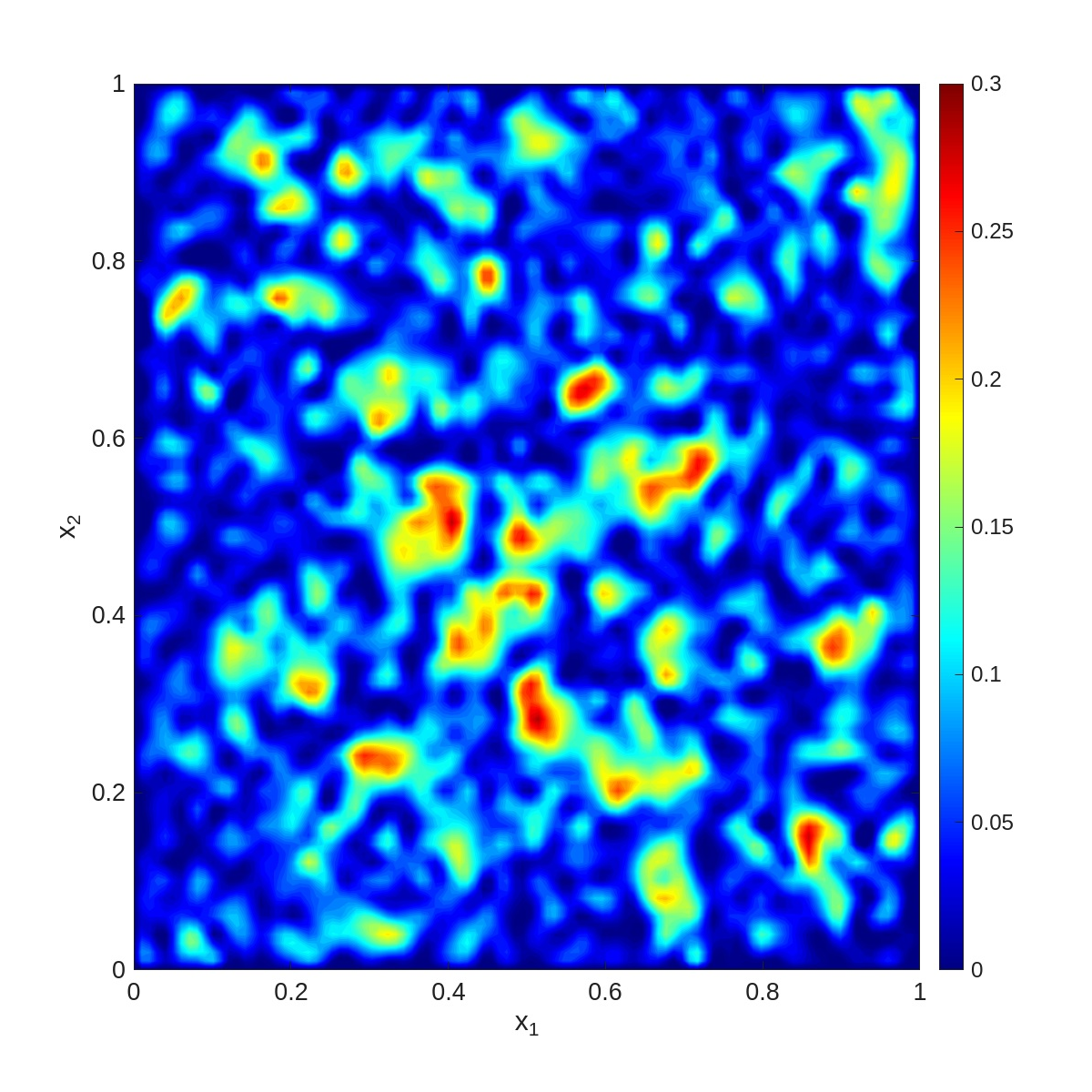} \\
    \end{tabular}

    \vspace{0.3cm} 

    \hspace{-0.8 cm}
    \begin{tabular}{ccc}
       \hspace{0.1 cm}  \includegraphics[scale=0.22]{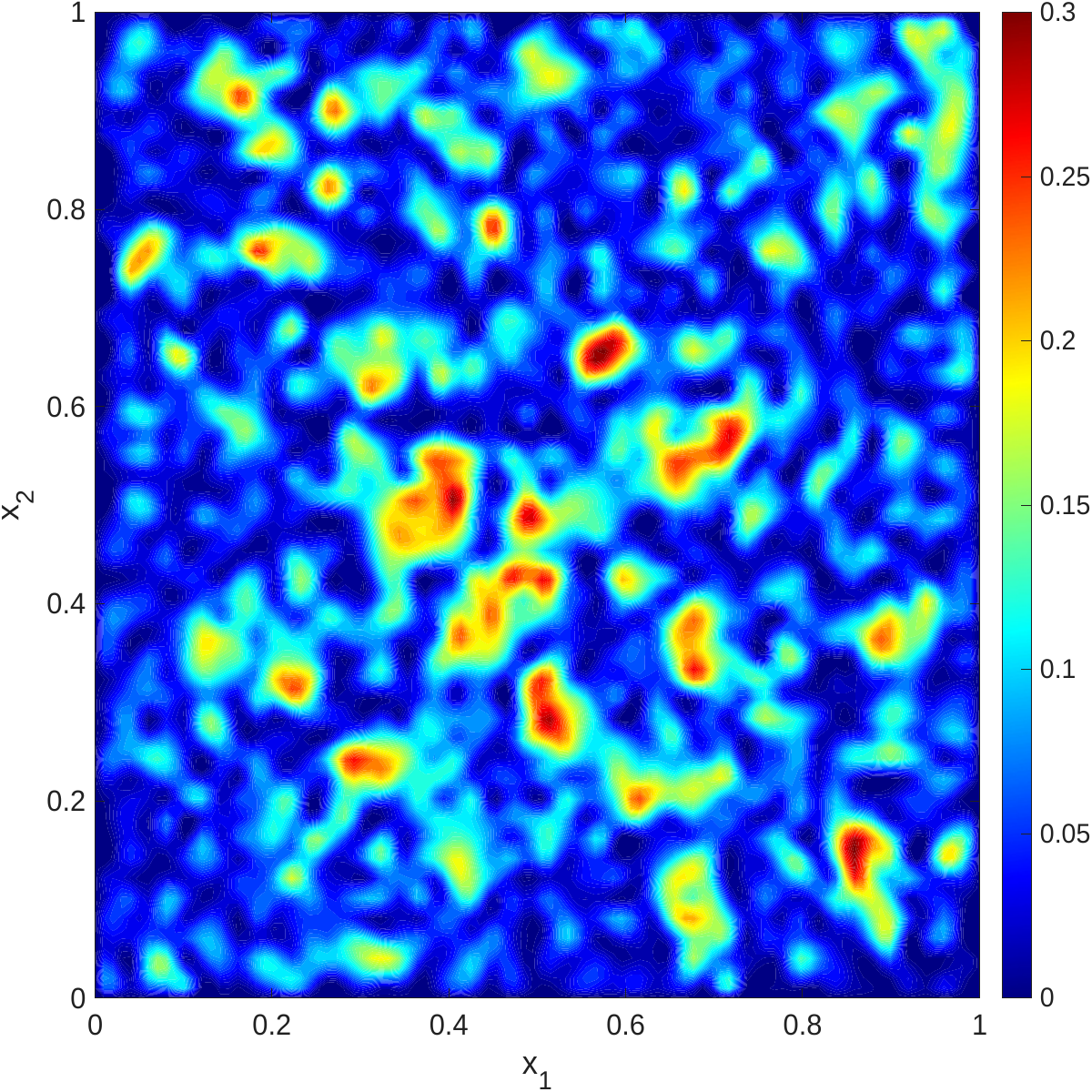} &
         \hspace{0.1 cm}  \includegraphics[scale=0.22]{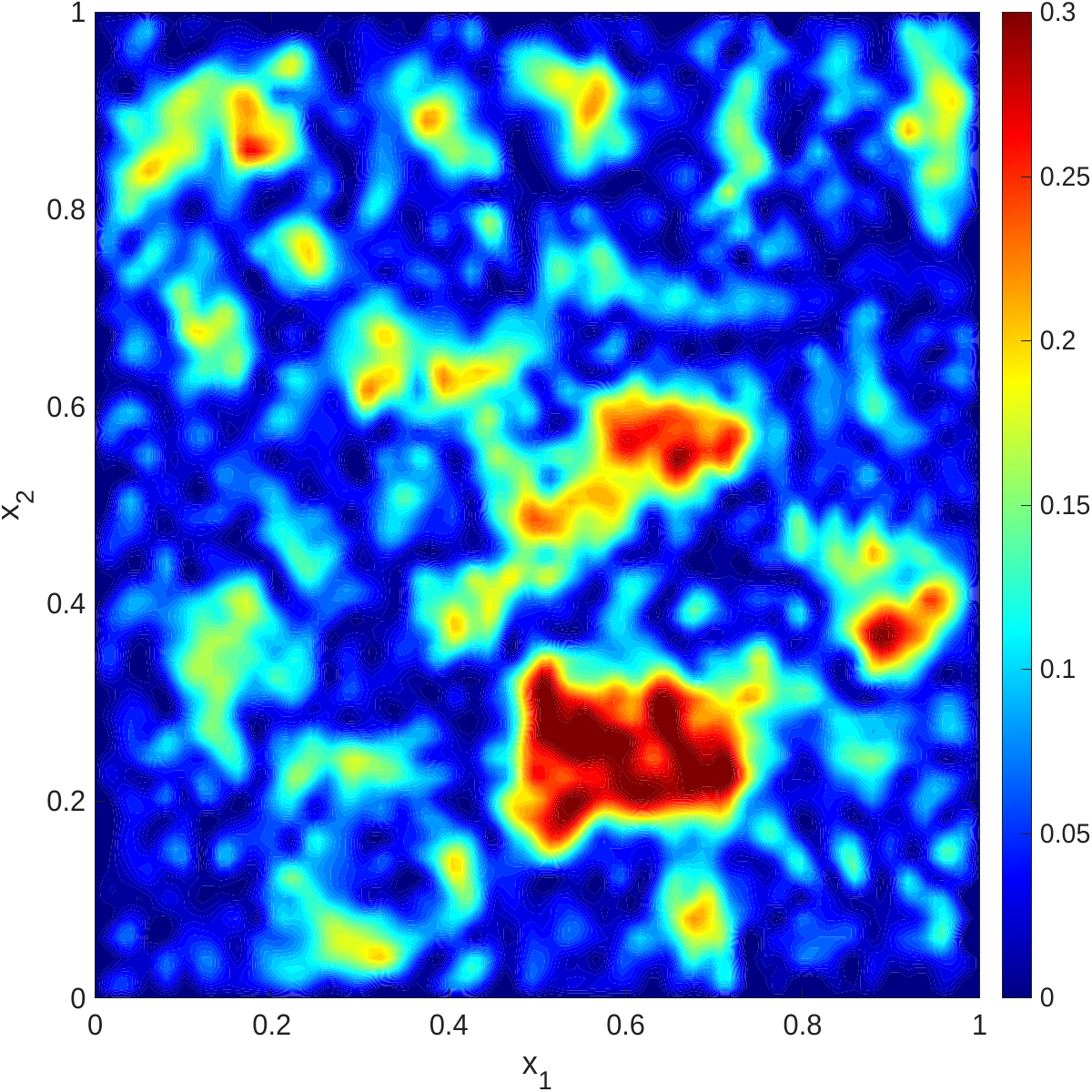} &
      \hspace{0.1 cm}    \includegraphics[scale=0.22]{Figures/Poisson/VANILLA_ERROR_HEAT.pdf} \\
    \end{tabular}

    \caption{Absolute error in solutions with respect to truth computed using different variants of EnKI for the steady-state heat equation.
    Top row (left to right): EnKI-MC (I); EnKI-MC (II); Chada et al. \cite{chada2019convergence}. \\
    Middle row: Nesterov Acceleration \cite{vernon2025nesterov}; Anderson et al. \cite{anderson2007adaptive}; Vanilla EnKI. \\
   Third row: True solution.}
    \label{fig:heat_error}
\end{figure}

\begin{table}[h!]
\caption{Performance comparison of different Ensemble Kalman Inversion Algorithms for parameter inversion in a steady-state heat equation.}
\centering
\begin{tabular}{|c | c | c |c|c|c|}
        \hline
   Method  & Time to reach  & No of & No of &   Relative error  \\ 
      & ($\epsilon$ tolerance) & iterations & $\sG$ evaluations& w.r.t ground truth   \\  \hline
   EnKI-MC (I)  & $266$ min& 188 & 9400 &0.329 \\ \hline
         EnKI-MC (II)  &$360$ min& 166 & 8300 & 0.327 \\ \hline
Chada et al. \cite{chada2019convergence} &$1242$ min & 1049 & 52450 & 0.335 \\  \hline
Vanilla EnKI (\eqref{non_lin_enki}) &$2038$ min  & 1446  & 72300 & 0.395  \\  \hline
Nesterov Acceleration \cite{vernon2025nesterov}  &$1250$ min& 1050 & 52500 & 0.347\\  \hline
Anderson et al. \cite{anderson2007adaptive} &$2038$ min & 1446 & 72300 & 0.395   \\  \hline
\end{tabular} 
 \label{num_result3}
\end{table}

\section{Conclusion}

This works presented a new perspective on the understanding of the Ensemble Kalman filter (EnKF)
for inverse problems. We showed that the Kalman filter equation can be derived as the solution to a randomized dual problem. The perspective allowed us to derive a new non-asymptotic result for EnKF which we  verified numerically for the 1D Deconvolution problem. Theorem \ref{non_asymp} also allowed us to rigorously explain the need to increase
the sample size $N$ in the small noise limit, a phenomenon commonly observed in practice (see remark \ref{import_theo}).
Further, based on the duality perspective we also developed EnKI-MC (I) and EnKI-MC (II)-two variants of the Ensemble Kalman Inversion Algorithm (EnKI) for convergence acceleration. For all the numerical examples, we observed that the developed variants led to significant convergence acceleration in comparison to other methods while also producing better quality inverse solution at termination of the algorithm. Even though EnKI-MC (I) and EnKI-MC (II), presented in Algorithm \ref{Algo_full} and Algorithm \ref{Algo_full_II}, are variants or extensions of the vanilla EnKI algorithm, the same methodology can be incorporated into other  EnKI variants currently available. For example, the method can be  adapted to the TEnKI (Tikhonov EnKI) approach presented in \cite{chada2020tikhonov}. This extension will be explored in the future.

\appendix

\section{Details of other approaches used in comparison study}
\label{other_approaches}

The description of methods used in our comparison study are provided here.
\begin{enumerate}
\item Vanilla EnKI: Iterative scheme given by \eqref{non_lin_enki}.
    \item Chada et al. \cite{chada2019convergence}: This method considers an adaptive multiplicative covariance correction of the form $\alpha(\bu_k,\ k)=k^{\beta}$ (where $0\leq \beta\leq 0.8$) suggested in \eqref{mul_cov} for convergence acceleration.  Chada et al. \cite{chada2019convergence} also considered an additive covariance inflation and Tikhonov regularization in their framework which is not considered in our implementation.   We choose $\beta=0.8$ in our experiments for maximum speed of convergence. Higher values of $\beta>1$ is not recommended as it affects the numerical stability of the algorithm and causes significant divergence from the true solution, as illustrated in Figure \ref{ENKF_IC_log}.
    \item Nesterov Acceleration \cite{vernon2025nesterov}: Implementation of Algorithm 1 in \cite{vernon2025nesterov}.
    \item Anderson et al. \cite{anderson2007adaptive}: In this  approach one models the covariance of the innovation (residual) as $S(\alpha_k)=\L_h+\alpha_k C_{pp}(\ub_k)$, where $C_{pp}(\ub_k)$ is defined in \eqref{non_lin_enki}. Assuming $\bar{\bold{r}}_k| \alpha_k\sim \GM{{\bf{0}}}{S(\alpha_k)}$, and assuming a prior for $\alpha_k$ as $\alpha_k \sim \GM{{{1}}}{1}$, the Maximum A Posteriori estimate for $\alpha_k$ is given as:
    \begin{equation}
        \alpha_k=\arg \min_{\alpha_k} \frac{1}{2}\LRp{{\bold{r}}_k^T(S(\alpha_k))^{-1}{\bold{r}}_k}+\frac{1}{2} \log \LRp{\mathrm{det}(S(\alpha_k))}+\frac{1}{2} \LRp{{\bold{r}}_k-1}^2.
        \label{estimated_ander}
    \end{equation}
    The estimated $\alpha_k$ from solving \eqref{estimated_ander} is used as the multiplicative covariance correction factor in \eqref{mul_cov} (that is we set $\alpha(\bu_k,\ k)=\alpha_k$.
\end{enumerate}

\bibliographystyle{unsrt}
\bibliography{ceo}

\end{document}